\documentclass[a4paper,11pt]{article}

\usepackage[english]{babel}
\usepackage[utf8]{inputenc}

\usepackage{amsmath,amssymb,amsthm,latexsym}
\usepackage[justification=centering]{caption}
\usepackage{url,enumitem}

\usepackage{tikz}
\usetikzlibrary{patterns}

\usepackage{color}

\textwidth 16cm
\oddsidemargin 0cm
\evensidemargin 0cm

\newtheorem{theorem}{Theorem}[section]
\newtheorem*{theorem*}{Theorem}
\newtheorem{lemma}[theorem]{Lemma}
\newtheorem{proposition}[theorem]{Proposition}
\newtheorem{corollary}[theorem]{Corollary}

\theoremstyle{definition}

\newtheorem*{fact*}{Fact}

\setlist{itemsep=0.1pt}

\def\A{\mathcal{A}}

\def\G{\mathcal{G}}

\def\N{\mathbb{N}}
\def\Q{\mathbb{Q}}
\def\Z{\mathbb{Z}}

\def\Qt{\Q[t^{\pm1}]}

\def\e{\varepsilon}

\def\psib{\overline{\psi}}

\def\Al{\mathfrak{A}}
\def\bl{\mathfrak{b}}
\def\BlMod{(\Al,\bl)}
\def\BlModD{\BlMod^{\oplus 2}}
\def\BlModDD{(\Al\oplus\Al,\bl\oplus\bl)}
\def\BlModN{\BlMod^{\oplus N}}

\def\BlModL{\BlMod^{\oplus \ell}}

\def\BlModl{\BlMod^{\oplus \ell}}
\def\BlModT{\BlMod^{\oplus 3}}
\def\E{\mathcal{E}}

\def\AS{{\mathrm{AS}}}
\def\Aut {{\mathrm{Aut}}}

\def\EV{{\mathrm{EV}}}
\def\Hol{{\mathrm{Hol}}}
\def\bHol{{\overline{\Hol}}}
\def\IHX{{\mathrm{IHX}}}
\def\LD{{\mathrm{LD}}}
\def\LE{{\mathrm{LE}}}
\def\LV{{\mathrm{LV}}}

\def\OR{{\mathrm{OR}}}

\def\YY{{\textrm{YY}}}
\def\H{{\textrm{H}}}
\def\tinf{\textrm{\tiny$\leq$}}

\newcommand{\fract}[2]{\hbox{\leavevmode
  \kern.1em \raise .25ex \hbox{\the\scriptfont0 $#1$}\kern-.1em }\big/
  {\hbox{\kern-.15em \lower .5ex \hbox{\the\scriptfont0 $#2$}} }}

\newcommand{\CenterFormula}[1]{\centerline{$\displaystyle{#1}$}}

\newcommand{\SixLegsPattern}{\vcenter{\hbox{{
 \begin{tikzpicture} [scale=0.1]
 \begin{scope} 
  \draw (-3.4,-2) -- (0,0) -- (0,4) (0,0) -- (3.4,-2);
  \draw (0,4) node {$\scriptscriptstyle{\bullet}$};
  \draw (-3.4,-2) node {$\scriptscriptstyle{\bullet}$};
  \draw (3.4,-2) node {$\scriptscriptstyle{\bullet}$};
 \end{scope}
 \begin{scope} [xshift=10cm]
  \draw (-3.4,-2) -- (0,0) -- (0,4) (0,0) -- (3.4,-2);
  \draw (0,4) node {$\scriptscriptstyle{\bullet}$};
  \draw (-3.4,-2) node {$\scriptscriptstyle{\bullet}$};
  \draw (3.4,-2) node {$\scriptscriptstyle{\bullet}$};
 \end{scope}
 \end{tikzpicture}}}}}
 
\newcommand{\sixlegsSchema}{\vcenter{\hbox{
 \begin{tikzpicture} [scale=0.1]
  \draw (-3.4,-2) -- (0,0) -- (0,4) (0,0) -- (3.4,-2);
  \draw (0,4) node {$\scriptstyle{\bullet}$} node[left] {1};
  \draw (-3.4,-2) node {$\scriptstyle{\bullet}$} node[below] {2};
  \draw (3.4,-2) node {$\scriptstyle{\bullet}$} node[below] {3};
 \begin{scope} [xshift=11.5cm]
  \draw (-3.4,-2) -- (0,0) -- (0,4) (0,0) -- (3.4,-2);
  \draw (0,4) node {$\scriptstyle{\bullet}$} node[left] {4};
  \draw (-3.4,-2) node {$\scriptstyle{\bullet}$} node[below] {5};
  \draw (3.4,-2) node {$\scriptstyle{\bullet}$} node[below] {6};
 \end{scope}
 \end{tikzpicture}}}}
 
\newcommand{\sixlegsop}[6]{\vcenter{\hbox{
 \begin{tikzpicture} [scale=0.15]
  \draw (-3.4,-2) -- (0,0) -- (0,4) (0,0) -- (3.4,-2);
  \draw (0,4) node {$\scriptstyle{\bullet}$} node[left] {$#1$};
  \draw (-3.4,-2) node {$\scriptstyle{\bullet}$} node[below] {$#2$};
  \draw (3.4,-2) node {$\scriptstyle{\bullet}$} node[below] {$#3$};
 \begin{scope} [xshift=11.5cm]
  \draw (-3.4,-2) -- (0,0) -- (0,4) (0,0) -- (3.4,-2);
  \draw (0,4) node {$\scriptstyle{\bullet}$} node[left] {$#4$};
  \draw (-3.4,-2) node {$\scriptstyle{\bullet}$} node[below] {$#5$};
  \draw (3.4,-2) node {$\scriptstyle{\bullet}$} node[below] {$#6$};
 \end{scope}
 \end{tikzpicture}}}}
 
\newcommand{\sixlegsGR}[6]{\vcenter{\hbox{
 \begin{tikzpicture} [scale=0.3]
  \draw (-3.4,-2) -- (0,0) -- (0,4) (0,0) -- (3.4,-2);
  \draw (0,4) node {$\bullet$} node[left] {$#1$};
  \draw (-3.4,-2) node {$\bullet$} node[below] {$#2$};
  \draw (3.4,-2) node {$\bullet$} node[below] {$#3$};
 \begin{scope} [xshift=13cm]
  \draw (-3.4,-2) -- (0,0) -- (0,4) (0,0) -- (3.4,-2);
  \draw (0,4) node {$\bullet$} node[left] {$#4$};
  \draw (-3.4,-2) node {$\bullet$} node[below] {$#5$};
  \draw (3.4,-2) node {$\bullet$} node[below] {$#6$};
 \end{scope}
 \end{tikzpicture}}}}
 
\newcommand{\sixlegsGRR}[6]{\vcenter{\hbox{
 \begin{tikzpicture} [scale=0.3]
  \draw (-3.4,-2) -- (0,0) -- (0,4) (0,0) -- (3.4,-2);
  \draw (0,4) node {$\bullet$} node[left] {$#1$};
  \draw (-3.4,-2) node {$\bullet$} node[below] {$#2\hspace{.8cm}$};
  \draw (3.4,-2) node {$\bullet$} node[below] {$\hspace{.8cm}#3$};
 \begin{scope} [xshift=18cm]
  \draw (-3.4,-2) -- (0,0) -- (0,4) (0,0) -- (3.4,-2);
  \draw (0,4) node {$\bullet$} node[left] {$#4$};
  \draw (-3.4,-2) node {$\bullet$} node[below] {$#5\hspace{.8cm}$};
  \draw (3.4,-2) node {$\bullet$} node[below] {$\hspace{.8cm}#6$};
 \end{scope}
 \end{tikzpicture}}}}

\newcommand{\sixlegs}[6]{\sixlegsop{#1\gamma_1}{#2\gamma_2}{#3\gamma_3}{#4\gamma_1}{#5\gamma_2}{#6\gamma_3}}

\newcommand{\sixlegsbis}[6]{\sixlegsop{#1\gamma_1}{#2\gamma_2}{#3\gamma_2}{#4\gamma_1}{#5\gamma_3}{#6\gamma_3}}

\newcommand{\FourLegsPattern}{\vcenter{\hbox{
 \begin{tikzpicture} [scale=0.1] 
  \draw (0,3.4) -- (2,0) -- (10,0) -- (12,3.4) (0,-3.4) -- (2,0) (10,0) -- (12,-3.4);
  \draw (0,3.4) node {$\scriptscriptstyle{\bullet}$};
  \draw (0,-3.4) node {$\scriptscriptstyle{\bullet}$};
  \draw (12,-3.4) node {$\scriptscriptstyle{\bullet}$};
  \draw (12,3.4) node {$\scriptscriptstyle{\bullet}$};
 \end{tikzpicture}}}}
 
\newcommand{\fourlegsop}[5]{\vcenter{\hbox{
 \begin{tikzpicture} [scale=#5] 
  \draw (0,3.4) -- (2,0) -- (10,0) -- (12,3.4) (0,-3.4) -- (2,0) (10,0) -- (12,-3.4);
  \draw (0,3.4) node {$\scriptstyle{\bullet}$} node[left] {$#1$};
  \draw (0,-3.4) node {$\scriptstyle{\bullet}$} node[left] {$#2$};
  \draw (12,-3.4) node {$\scriptstyle{\bullet}$} node[right] {$#3$};
  \draw (12,3.4) node {$\scriptstyle{\bullet}$} node[right] {$#4$};
 \end{tikzpicture}}}}
 
\newcommand{\fourlegsGR}[4]{\vcenter{\hbox{
 \begin{tikzpicture} [scale=0.2] 
  \draw (0,3.4) -- (2,0) -- (10,0) -- (12,3.4) (0,-3.4) -- (2,0) (10,0) -- (12,-3.4);
  \draw (0,3.4) node {$\bullet$} node[left] {$#1$};
  \draw (0,-3.4) node {$\bullet$} node[left] {$#2$};
  \draw (12,-3.4) node {$\bullet$} node[right] {$#3$};
  \draw (12,3.4) node {$\bullet$} node[right] {$#4$};
 \end{tikzpicture}}}}
 
\newcommand{\fourlegs}[4]{\fourlegsop{#1\gamma_1}{#2\gamma_2}{#3\gamma_1}{#4\gamma_2}{0.12}}

\newcommand{\fourlegsU}[4]{\fourlegsop{#1\gamma_1}{#2\gamma_1}{#3\gamma_1}{#4\gamma_1}{0.12}}
\newcommand{\fourlegsD}[4]{\fourlegsop{#1\gamma_2}{#2\gamma_2}{#3\gamma_2}{#4\gamma_2}{0.12}}

\newcommand{\fourlegsBas}{\vcenter{\hbox{\begin{tikzpicture} [scale=0.15]
\begin{scope}
  \draw (-3.4,-2) -- (0,0) -- (3.4,-2);
  \draw (-3.4,-2) node {$\scriptstyle{\bullet}$} node[below] {$\gamma_2$};
  \draw (3.4,-2) node {$\scriptstyle{\bullet}$} node[below] {$t\gamma_2$};
  \draw (0,0) .. controls +(0,6) and +(0,6) .. (11,0);
 \end{scope}
 \begin{scope} [xshift=11cm]
  \draw (-3.4,-2) -- (0,0) -- (3.4,-2);
  \draw (-3.4,-2) node {$\scriptstyle{\bullet}$} node[below] {$\gamma_3$};
  \draw (3.4,-2) node {$\scriptstyle{\bullet}$} node[below] {$t\gamma_3$};
 \end{scope}
 \end{tikzpicture}}}}

\newcommand{\fourlegsRevers}{\vcenter{\hbox{\begin{tikzpicture} [scale=0.15]
 \begin{scope} 
  \draw (-3.4,-2) -- (0,0) -- (0,4);
  \draw (0,4) node {$\scriptstyle{\bullet}$} node[left] {$\gamma_1$};
  \draw (-3.4,-2) node {$\scriptstyle{\bullet}$} node[below] {$\gamma_2$};
 \end{scope}
 \begin{scope} [xshift=8cm]
  \draw (-3.4,-2) -- (0,0) -- (0,4);
  \draw (0,4) node {$\scriptstyle{\bullet}$} node[left] {$\gamma_1$};
  \draw (-3.4,-2) node {$\scriptstyle{\bullet}$} node[above] {$\gamma_2$};
 \end{scope}
  \draw (0,0) .. controls +(5,-5) and +(4,-4) .. (8,0);
 \end{tikzpicture}}}}

\newcommand{\fourlegsLoop}{\vcenter{\hbox{\begin{tikzpicture} [scale=0.15]
\begin{scope}
  \draw (0,0) -- (0,4);
  \draw (0,4) node {$\scriptstyle{\bullet}$} node[left] {$\gamma_1$};
  \draw (0,-2) circle (2);
 \end{scope}
 \begin{scope} [xshift=8cm]
  \draw (-3.4,-2) -- (0,0) -- (0,4) (0,0) -- (3.4,-2);
  \draw (0,4) node {$\scriptstyle{\bullet}$} node[left] {$\gamma_1$};
  \draw (-3.4,-2) node {$\scriptstyle{\bullet}$} node[below] {$\gamma_3$};
  \draw (3.4,-2) node {$\scriptstyle{\bullet}$} node[below] {$t\gamma_3$};
 \end{scope}
 \end{tikzpicture}}}}

\newcommand{\twolegsop}[2]{\vcenter{\hbox{
 \begin{tikzpicture} [scale=0.3] 
  \draw (0,0) -- (2,0) .. controls +(1,1.7) and +(-1,1.7) .. (6,0) -- (8,0);
  \draw (2,0) .. controls +(1,-1.7) and +(-1,-1.7) .. (6,0);
  \draw (0,0) node {$\scriptstyle{\bullet}$} node[left] {$#1$};
  \draw (8,0) node {$\scriptstyle{\bullet}$} node[right] {$#2$};
 \end{tikzpicture}}}}

\newcommand{\twolegs}[2]{\twolegsop{#1\gamma_1}{#2\gamma_1}}

\newcommand{\zerolegs}{\vcenter{\hbox{
 \begin{tikzpicture} [scale=0.3] 
  \draw (0,0) .. controls +(1,1.7) and +(-1,1.7) .. (4,0);
  \draw (0,0) .. controls +(1,-1.7) and +(-1,-1.7) .. (4,0);
  \draw (0,0) -- (4,0);
 \end{tikzpicture}}}}

\newcommand{\GenLoop}{\vcenter{\hbox{
 \begin{tikzpicture} [scale=0.1]
  \draw (0,0) -- (0,4);
  \draw (0,4) node {$\scriptscriptstyle{\bullet}$};
  \draw (0,-2) circle (2);
 \end{tikzpicture}}}}

\newcommand{\GLoop}[3]{\vcenter{\hbox{
 \begin{tikzpicture} [scale=0.15]
  \draw (0,2) -- (0,4); \draw[->] (0,0) -- (0,2) node[right] {$#2$};
  \draw (0,4) node {$\scriptscriptstyle{\bullet}$} node[left] {$#1$};
  \draw (0,-2) circle (2); \draw[->] (0,-4) -- (0.1,-4) node[below] {$#3$};
 \end{tikzpicture}}}}

\newcommand{\PairingU}{\vcenter{\hbox{\begin{tikzpicture} [scale=0.4]
\draw (0,0) -- (0,4) (2.5,0) -- (2.5,4);
\draw (0,4) node {$\scriptstyle{\bullet}$} (2.5,4) node {$\scriptstyle{\bullet}$};
\draw (0,4) node[above right] {$v$} (2.5,4) node[above right] {$v'$};
\draw[->] (0,0) -- (0,2);
\draw[->] (2.5,0) -- (2.5,2);
\draw (0,2) node[right] {$P$} (2.5,2) node[right] {$Q$};
\draw (-0.43,-0.25) -- (0,0) -- (0.43,-0.25) (2.07,-0.25) -- (2.5,0) -- (2.93,-0.25);
\draw[dashed] (-0.86,-0.5) -- (-0.43,-0.25)  (0.86,-0.5) -- (0.43,-0.25)  (1.64,-0.5) -- (2.07,-0.25) (3.36,-0.5) -- (2.93,-0.25);
\end{tikzpicture}}}}
\newcommand{\PairingD}{\vcenter{\hbox{\begin{tikzpicture} [scale=0.4]
\draw (-0.43,-0.25) -- (0,0) -- (0.43,-0.25) (2.07,-0.25) -- (2.5,0) -- (2.93,-0.25);
\draw[dashed] (-0.86,-0.5) -- (-0.43,-0.25) (0.86,-0.5) -- (0.43,-0.25)  (1.64,-0.5) -- (2.07,-0.25) (3.36,-0.5) -- (2.93,-0.25);
\draw (0,0) -- (0,2.75) (2.5,0) -- (2.5,2.75) arc (0:180:1.25);
\draw[->] (1.2,4) -- (1.3,4);
\draw (1.25,4) node[above] {$P(t)Q(t^{-1})f_{vv'}(t)$};
\end{tikzpicture}}}}

\newcommand{\tiers}[1]{
  \draw[rotate=#1,color=white,line width=4pt] (0,0) -- (0,-2);
  \draw[rotate=#1] (0,0) -- (0,-2);}
\newcommand{\Orientation}{\vcenter{\hbox{\begin{tikzpicture} [scale=0.2]
  \draw (0,0) circle (1);
  \draw[<-] (-0.05,1) -- (0.05,1);
  \tiers{0}
  \tiers{120}
  \tiers{-120}
  \end{tikzpicture}}}}

\newcommand{\ASU}{\vcenter{\hbox{\begin{tikzpicture} [scale=0.3]
\draw (0,4) -- (2,2);
\draw (2,2) -- (4,4);
\draw (2,2) -- (2,0);
\end{tikzpicture}}}}
\newcommand{\ASD}{\vcenter{\hbox{\begin{tikzpicture} [scale=0.3]
\draw (8,2) .. controls +(2,0) and +(2.5,-1) .. (6,4);
\draw[white,line width=6pt] (8,2) .. controls +(-2,0) and +(-2.5,-1) .. (10,4);
\draw (8,2) .. controls +(-2,0) and +(-2.5,-1) .. (10,4);
\draw (8,0) -- (8,2);
\end{tikzpicture}}}}
\newcommand{\IHXU}{\vcenter{\hbox{\begin{tikzpicture} [scale=0.3]
\draw (18,4) -- (20,3) -- (20,1) -- (18,0);
\draw (20,1) -- (22,0);
\draw (20,3) -- (22,4);
\draw (20.5,2) node{1};
\end{tikzpicture}}}}
\newcommand{\IHXD}{\vcenter{\hbox{\begin{tikzpicture} [scale=0.3]
\draw (24,4) -- (25,2) -- (27,2) -- (28,4);
\draw (24,0) -- (25,2);
\draw (27,2) -- (28,0);
\draw (26,1.2) node{1};
\end{tikzpicture}}}}
\newcommand{\IHXT}{\vcenter{\hbox{\begin{tikzpicture} [scale=0.3]
\draw (30,4) -- (33,2) -- (34,0);
\draw[white,line width=6pt] (31,2) -- (34,4);
\draw (30,0) -- (31,2) -- (34,4);
\draw (31,2) -- (33,2);
\draw (32,1.2) node{1};
\end{tikzpicture}}}}
\newcommand{\LEU}{\vcenter{\hbox{\begin{tikzpicture} [scale=0.3]
\draw (9.4,0) -- (9.4,4);
\draw[->] (9.4,0) -- (9.4,3);
\draw (12.4,2.8) node{$xP+yQ$};
\end{tikzpicture}}}}
\newcommand{\LED}{\vcenter{\hbox{\begin{tikzpicture} [scale=0.3]
\draw (1,0) -- (1,4);
\draw[->] (1,0) -- (1,3);
\draw (1.8,2.8) node{$P$};
\end{tikzpicture}}}}
\newcommand{\LET}{\vcenter{\hbox{\begin{tikzpicture} [scale=0.3]
\draw (5.7,0) -- (5.7,4);
\draw[->] (5.7,0) -- (5.7,3);
\draw (6.5,2.8) node{$Q$};
\end{tikzpicture}}}}
\newcommand{\ORU}{\vcenter{\hbox{\begin{tikzpicture} [scale=0.3]
\draw (0,0) -- (0,4);
\draw[->] (0,0) -- (0,3);
\draw (1.5,2.8) node{$P(t)$};
\end{tikzpicture}}}}
\newcommand{\ORD}{\vcenter{\hbox{\begin{tikzpicture} [scale=0.3]
\draw (5,0) -- (5,4);
\draw[->] (5,4) -- (5,3);
\draw (7.2,2.8) node{$P(t^{-1})$};
\end{tikzpicture}}}}
\newcommand{\edge}[2]{
\draw[rotate=#1] (0,0) -- (0,3);
\draw[rotate=#1,->] (0,0) -- (0,1.5);
\draw[rotate=#1] (1,1.3) node{#2};}
\newcommand{\HolU}{\vcenter{\hbox{\begin{tikzpicture} [scale=0.3]
\edge{0}{$P$}
\edge{120}{$Q$}
\edge{240}{$R$}
\end{tikzpicture}}}}
\newcommand{\HolD}{\vcenter{\hbox{\begin{tikzpicture} [scale=0.3]
\edge{0}{$tP$}
\edge{120}{$tQ$}
\edge{240}{$tR$}
\end{tikzpicture}}}}
\newcommand{\LVU}{\vcenter{\hbox{\begin{tikzpicture} [scale=0.3]
\draw (13,0) -- (13,4);
\draw (13,1) node[right] {$D$};
\draw (13,4) node{$\bullet$};
\draw (13,4) node[right] {$x\gamma_1+y\gamma_2$};
\draw (13,4) node[below left] {$\scriptstyle{v}$};
\end{tikzpicture}}}}
\newcommand{\LVD}{\vcenter{\hbox{\begin{tikzpicture} [scale=0.3]
\draw (1,0) -- (1,4);
\draw (1,1) node[right] {$D_1$};
\draw (1,4) node{$\bullet$};
\draw (1,4) node[right] {$\gamma_1$};
\draw (1,4) node[below left] {$\scriptstyle{v}$};
\end{tikzpicture}}}}
\newcommand{\LVT}{\vcenter{\hbox{\begin{tikzpicture} [scale=0.3]
\draw (7.5,0) -- (7.5,4);
\draw (7.5,1) node[right] {$D_2$};
\draw (7.5,4) node{$\bullet$};
\draw (7.5,4) node[right] {$\gamma_2$};
\draw (7.5,4) node[below left] {$\scriptstyle{v}$};
\end{tikzpicture}}}}
\newcommand{\EVU}{\vcenter{\hbox{\begin{tikzpicture} [scale=0.3]
\draw (-0.8,0) -- (-0.8,4);
\draw (-0.8,4) node{$\bullet$};
\draw (-0.8,4) node[below left] {$\scriptstyle{v}$};
\draw[->] (-0.8,0) -- (-0.8,2);
\draw (-0.8,4) node[right] {$\gamma$};
\draw (0.6,1.8) node{$PQ$};
\draw (-0.8,0) node[right] {$D$};
\end{tikzpicture}}}}
\newcommand{\EVD}{\vcenter{\hbox{\begin{tikzpicture} [scale=0.3]
\draw (5,0) -- (5,4);
\draw (5,4) node{$\bullet$};
\draw (5,4) node[below left] {$\scriptstyle{v}$};
\draw[->] (5,0) -- (5,2);
\draw (5,4) node[right] {$Q\gamma$};
\draw (6,1.8) node{$P$};
\draw (5,0) node[right] {$D'$};
\end{tikzpicture}}}}
\newcommand{\LDU}{\vcenter{\hbox{\begin{tikzpicture} [scale=0.3]
\draw (-1,0) -- (-1,4);
\draw (-1,2) node[right] {$1$};
\draw (-1,4) node{$\bullet$};
\draw (-1,4) node[below left] {$\scriptstyle{v_1}$};
\draw (-1,4) node[right] {$\gamma_1$};
\draw (2.5,0) -- (2.5,4);
\draw (2.5,2) node[right] {$1$};
\draw (2.5,4) node{$\bullet$};
\draw (2.5,4) node[below left] {$\scriptstyle{v_2}$};
\draw (2.5,4) node[right] {$\gamma_2$};
\draw (2.5,0) node[right] {$D$};
\end{tikzpicture}}}}
\newcommand{\LDD}{\vcenter{\hbox{\begin{tikzpicture} [scale=0.3]
\draw (8,0) -- (8,4);
\draw (8,2) node[right] {$1$};
\draw (8,4) node{$\bullet$};
\draw (8,4) node[below left] {$\scriptstyle{v_1}$};
\draw (8,4) node[right] {$\gamma_1$};
\draw (11.5,0) -- (11.5,4);
\draw (11.5,2) node[right] {$1$};
\draw (11.5,4) node{$\bullet$};
\draw (11.5,4) node[below left] {$\scriptstyle{v_2}$};
\draw (11.5,4) node[right] {$\gamma_2$};
\draw (11.5,0) node[right] {$D'$};
\end{tikzpicture}}}}
\newcommand{\LDT}{\vcenter{\hbox{\begin{tikzpicture} [scale=0.3]
\draw (16,0) -- (16,2.75) (18.5,0) -- (18.5,2.75) arc (0:180:1.25);
\draw[->] (17.2,4) -- (17.3,4);
\draw (17.25,4) node[above] {$P$};
\draw (18.5,0) node[right] {$D''$};
\end{tikzpicture}}}}
\newcommand{\HolPU}{\vcenter{\hbox{\begin{tikzpicture} [scale=0.3]
\draw (0,0) -- (0,3);
\draw (0,-1) circle (1);
\draw[->] (-0.1,-2) -- (0,-2);
\draw (0,-3) node{$g$};
\draw[->] (0,0) -- (0,1.5);
\draw (0,1.3) node[right] {$f$};
\end{tikzpicture}}}}
\newcommand{\HolPD}{\vcenter{\hbox{\begin{tikzpicture} [scale=0.3]
\draw (0,0) -- (0,3);
\draw (0,-1) circle (1);
\draw[->] (-0.1,-2) -- (0,-2);
\draw (0,-3) node{$g$};
\draw[->] (0,0) -- (0,1.5);
\draw (0,1.3) node[right] {$tf$};
\end{tikzpicture}}}}

\newcommand{\Strut}{\vcenter{\hbox{\begin{tikzpicture} 
\draw (0,-0.5) -- (0,0);
\draw (0,-0.5) node{$\scriptstyle{\bullet}$};
\draw (0,0) node{$\scriptstyle{\bullet}$};
\end{tikzpicture}}}}

\definecolor{purple}{rgb}{0.63, 0.36, 0.94}

\title{Toward universality in degree 2 of the Kricker lift of the
  Kontsevich integral and the Lescop equivariant invariant}

\author{Benjamin Audoux $\&$ Delphine Moussard}

\date{}

\begin{document}

\maketitle

\begin{abstract}
In the setting of finite type invariants for null-homologous knots in rational homology 3--spheres with respect to 
null Lagrangian-preserving surgeries, there are two candidates to be universal invariants, defined
respectively by Kricker and Lescop. In a previous paper, the second
author defined maps between spaces of Jacobi diagrams. Injectivity for
these maps would imply that Kricker and Lescop invariants are indeed universal invariants; 
this would prove in particular that these two invariants are equivalent. 
In the present paper, we investigate the
injectivity status of these maps for degree 2 invariants, in the case
of knots whose Blanchfield modules are direct sums of isomorphic Blanchfield modules of \mbox{$\Q$--dimension}
two. We prove that they are always injective except in one case, for
which we determine explicitly the kernel.

\vspace{1ex}

\noindent \textbf{MSC}: 57M27 
\vspace{1ex}

\noindent \textbf{Keywords:} 3--manifold, knot, homology sphere, beaded Jacobi diagram, finite type invariant.
\end{abstract}

\tableofcontents

\section{Introduction}

The work presented here has its source in the notion of finite type invariants. 
This notion first appeared in independent works of Goussarov and Vassiliev involving invariants of knots in the 3--dimensional 
sphere $S^3$; in this case, finite type invariants are also called Vassiliev invariants. 
Finite type invariants of knots in $S^3$ are defined by their polynomial behaviour with respect to crossing changes. 
The discovery of the Kontsevich integral, which is a universal invariant among all finite type invariants of knots in $S^3$, revealed the depth of this class 
of invariants. It is known, for instance, that it dominates all Witten-Reshetikhin-Turaev quantum invariants. 
A theory of finite type invariants can be defined for any kind of topological objects provided that an elementary move on the set of these objects is fixed; 
the finite type invariants are defined by their polynomial behaviour with respect to this elementary move. 
For 3--dimensional manifolds, the notion of finite type invariants was introduced by Ohtsuki \cite{Oht4}, who constructed the first examples for integral 
homology 3--spheres, and it has been widely developed and generalized since then. In particular, Goussarov and Habiro independently developed 
a theory which involves any 3--dimensional manifolds---and their knots---and which contains the Ohtsuki theory for $\Z$--spheres \cite{GGP,Hab}. 
In this case, the elementary move is the so-called Borromean surgery.

Garoufalidis and Rozansky introduced in \cite{GR} a theory of finite type invariants for knots in integral homology 3--spheres with respect to null-moves, 
which are Borromean surgeries satisfying a homological condition with respect to the knot. 
This theory was adapted to the ``rational homology setting'' by Lescop \cite{Les3} who defined a theory of finite type invariants for null-homologous knots 
in rational homology 3--spheres with respect to null Lagrangian-preserving surgeries. 
In these theories, the degree $0$ and $1$ invariants are well understood and, up to them, there are two candidates to be universal finite type invariants, namely 
the Kricker rational lift of the Kontsevich integral \cite{Kri,GK} and the Lescop equivariant invariant built from integrals over configuration spaces \cite{Les2}. 
Both of them are known to be universal finite type invariants in two situations already: for knots in integral homology 3--spheres with trivial Alexander 
polynomial, with respect to null-moves \cite{GR}, and for null-homologous knots in rational homology 3--spheres with trivial Alexander polynomial, with respect to null Lagrangian-preserving surgeries \cite{M7}. In particular, the Kricker invariant and the Lescop invariant are equivalent for such knots---in
the sense that they separate the same pairs of knots. Lescop conjectured in \cite{Les3} that this equivalence holds in general. 

Universal finite type invariants are known in other settings: the Kontsevich integral for links in $S^3$ \cite{BN}, the Le--Murakami--Ohtsuki invariant 
and the Kontsevich--Kuperberg--Thurston invariant for integral homology 3--spheres \cite{Le} and for rational homology 3--spheres \cite{M2}. 
To establish universality of these invariants, the general idea is to give a combinatorial description of the graded space associated with the theory 
by identifying it with a graded space of diagrams. Such a project is developed in \cite{M7} to study the universality of the Kricker and the Lescop invariants 
as finite type invariants of \emph{$\Q$SK--pairs}, which are pairs made of a rational homology 3--sphere and a null-homologous knot in it. 

Null Lagrangian-preserving surgeries preserve the Blanchfield module (defined over $\Q$), so one can reduce the study of finite type invariants 
of $\Q$SK--pairs to the set of $\Q$SK--pairs with a fixed Blanchfield module. In order to describe the graded space $\G\BlMod$ associated 
with finite type invariants of $\Q$SK--pairs with Blanchfield module $\BlMod$, a graded space of diagrams $\A^{aug}\BlMod$ is constructed in \cite{M7}, 
together with a surjective map $\varphi:\A^{aug}\BlMod\to\G\BlMod$. Injectivity of this map would imply universality of the Kricker invariant 
and the Lescop invariant for $\Q$SK--pairs with Blanchfield module $\BlMod$ and consequently equivalence of these two invariants for such $\Q$SK--pairs. 

Let $\BlMod$ be any Blanchfield module with annihilator $\delta\in\Qt$. As detailed in \cite{M7}, we can focus on the subspace $\G^b\BlMod=\oplus_{n\in\Z}\G^b_n\BlMod$ 
of $\G\BlMod$ associated with Borromean surgeries and study the restricted map $\varphi:\A\BlMod\to\G^b\BlMod$ defined on a subspace $\A\BlMod$ of $\A^{aug}\BlMod$. 
Both the Lescop and the Kricker invariants are families $Z=(Z_n)_{n\in\N}$ of finite type invariants, where $Z_n$ has degree $n$ when $n$ is even and 
$Z_n$ is trivial when $n$ is odd. For $\Q$SK--pairs with Blanchfield module $\BlMod$, $Z_n$ takes values in a space $\A_n(\delta)$ 
of trivalent graphs with edges labelled in $\frac{1}{\delta}\Qt$. The finiteness properties imply that $Z_n$ induces a map on $\G^b_n\BlMod$. 
The map $\varphi:\A\BlMod\to\G^b\BlMod$ decomposes as the direct sum of maps $\varphi_n:\A_n\BlMod\to\G^b_n\BlMod$. 
Composing with $Z_n$, we get a map $\psi_n:\A_n\BlMod\to\A_n(\delta)$; this provides the following commutative diagram: 
\[
 \vcenter{\hbox{\begin{tikzpicture} [xscale=1.4,yscale=0.8]
  \draw (2,2) node {$\G^b_n\BlMod$};
  \draw (2,-2) node {$\A_n(\delta)$};
  \draw (0,0) node {$\A_n\BlMod$};
  \draw[->>] (0.4,0.5) -- (1.6,1.5); \draw (0.9,1.3) node {$\varphi_n$};
  \draw[->] (0.4,-0.5) -- (1.6,-1.5); \draw (0.9,-1.3) node {$\psi_n$};
  \draw[->] (2,1.5) -- (2,-1.5); \draw (2.4,0) node {$Z_n$};
 \end{tikzpicture}}}.
\] 
Note that the injectivity of $\psi_n$ implies the injectivity of $\varphi_n$. When $\BlMod$ is a direct sum of $N$ isomorphic Blanchfield modules,
it has been  established in \cite{M7} that $\psi_n$ is an isomorphism when $n\leq\frac23N$. In particular, this applies for any $n\in\N$ 
when $\BlMod$ is the trivial Blanchfield module. 

In this paper, we look into the case $n=2$ when
$\BlMod$ is a direct sum of $N$ isomorphic Blanchfield
modules of $\Q$-dimension two. According to the above-mentioned result,
the map $\psi_2$ is then injective as soon as $N\geq 3$. The only
remaining cases are hence $N=1$ and $N=2$. We prove the following (Propositions \ref{proponecopy}, \ref{proptwocopies} and \ref{propt+1}):
\begin{theorem}
 If $\BlMod$ is a Blanchfield module of $\Q$-dimension two, with annihilator $\delta$, then:
 \begin{enumerate}
 \item the map $\psi_2:\A_2\BlMod\to\A_2(\delta)$ is injective but not surjective;
 \item the map $\psi_2:\A_2\BlModDD\to\A_2(\delta)$ is injective if and only if $\delta\neq t+1+t^{-1}$; in this case, it is an isomorphism.
 \end{enumerate}
\end{theorem}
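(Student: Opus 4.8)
The plan is to make both maps completely explicit by fixing adapted bases of source and target and reading off $\psi_2$ as a matrix over $\Qt$. Since $\BlMod$ has $\Q$-dimension two, I would fix a $\Q$-basis $\{\gamma_1,\gamma_2\}$ of $\Al$ adapted to the Blanchfield form, so that the pairing values $\bl(\gamma_i,\gamma_j)\in\tfrac1\delta\Qt/\Qt$ are recorded by an explicit symmetric $2\times2$ matrix; the annihilator $\delta$ controls the denominators that occur. Using the combinatorial description of $\A_2\BlMod$ from \cite{M7}, the degree-$2$ part is spanned by the finitely many Jacobi diagrams on four trivalent vertices whose univalent legs are coloured by $\gamma_1,\gamma_2$ (the six-leg, four-leg and two-leg configurations), modulo the $\AS$, $\IHX$ and multilinearity relations together with the module relations $\LV$, $\EV$, $\Hol$, $\OR$, $\LD$, $\LE$. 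First I would reduce this to a genuine basis, independently write down a basis of the target $\A_2(\delta)$ of trivalent graphs with edges in $\tfrac1\delta\Qt$, and observe that $\psi_2$ contracts the $\gamma$-labels through $\bl$ to produce edge-beads; this yields an explicit matrix whose rank I can analyse.

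For part (1), once the matrix of $\psi_2$ on $\A_2\BlMod$ is written down, injectivity should follow from exhibiting a nonvanishing maximal minor, which I expect to be a fixed nonzero element of $\Qt$ independent of the precise shape of $\delta$. Non-surjectivity I would establish by a rank count over $\Qt$: the target $\A_2(\delta)$ has strictly larger rank, and it suffices to name one trivalent-graph class (a dumbbell-type generator, say) lying outside the image, which can be verified directly on the matrix. The point is that a single copy of $\BlMod$ does not carry enough colourings to fill $\A_2(\delta)$.

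For part (2), the source $\A_2\BlModDD$ is substantially larger, since legs may now be coloured by elements of $\Al\oplus\Al$, i.e. by four basis vectors rather than two, and the matrix of $\psi_2$ acquires a block structure reflecting the two summands and their cross-terms. After reducing the spanning set modulo the relations, I would match source and target ranks and compute the determinant of $\psi_2$ as a polynomial in $\delta$ with coefficients in $\Qt$; the crux is to show that this determinant vanishes \emph{exactly} when $\delta=t+1+t^{-1}$. This value is distinguished because, up to the unit $t$, $t+1+t^{-1}$ is the cyclotomic polynomial $t^2+t+1$, so the hypothesis $\delta=t+1+t^{-1}$ is precisely the case where $\Al\cong\Q[t^{\pm1}]/(t^2+t+1)$ is the field $\Q(\zeta)$ with $\zeta$ a primitive cube root of unity; this extra multiplicative structure forces a coincidence among the contracted beads and hence a relation in the image. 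When $\delta\neq t+1+t^{-1}$ the determinant is a unit (or nonzero element) of $\Qt$, giving injectivity and, by the matching rank count, surjectivity, hence the claimed isomorphism.

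The main obstacle I anticipate is twofold. First, I must control the relations — in particular $\IHX$ together with the holonomy and module relations — precisely enough to extract genuine bases rather than mere spanning sets, since an overcounted source would spuriously manufacture a kernel. Second, I must isolate the exact algebraic condition on $\delta$ from the determinant and, in the degenerate case $\delta=t+1+t^{-1}$, write down an explicit generator of the kernel and check that $\psi_2$ kills it. I expect this kernel element to be a specific combination of diagrams distinguishing the two summands of $\Al\oplus\Al$, whose image cancels precisely because of the field structure of $\Al$ in that single case.
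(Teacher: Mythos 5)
Your plan is not wrong in spirit---the paper also ends up reducing everything to explicit finite presentations and linear algebra over $\Q$---but as written it defers essentially all of the mathematical content to the step you yourself flag as ``the main obstacle'': extracting genuine bases of $\A_2\BlMod$, $\A_2\big(\BlModD\big)$ and $\A_2(\delta)$ from presentations with infinitely many generators and relations. That step \emph{is} the proof. Two concrete gaps. First, you give no way to get your hands on the target: the paper never writes down a basis of $\A_2(\delta)$ directly, but replaces it by $\A_2\big(\BlModT\big)$ via the isomorphism $\psib_2$ of Theorem \ref{th3n}, so that $\psi_2$ becomes the colour-inclusion map $\iota_2$ and the whole problem turns into comparing the filtrations by number of legs on $\A_2\big(\BlModl\big)$ for $\ell=1,2,3$. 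Without some such device, ``independently write down a basis of $\A_2(\delta)$'' is not an available move. Second, and more seriously, among the relations you must quotient by are the $\Aut$ relations for the full automorphism group of $\BlModD$ (resp.\ $\BlModT$), which by Lemma \ref{lemmaautA2} contains the infinite family $\lambda_{P,Q}$ indexed by $P,Q$ with $P\bar{P}+Q\bar{Q}=1\bmod\delta$. Certifying that these impose nothing beyond finitely many explicit relations is exactly where the paper resorts to a computer verification (Appendix \ref{sec:Program}); your proposal offers no mechanism for this, and without it you cannot certify the dimension of the source, hence neither injectivity nor the exact kernel.

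There is also an outright error in the one place where you offer a conceptual explanation. You claim $\delta=t+1+t^{-1}$ is singled out because $\Al\cong\Qt/(t^2+t+1)$ is then a field; but $\Qt/(\delta)$ is a field for \emph{every} irreducible symmetric $\delta$ of degree two (e.g.\ $\delta=t+3+t^{-1}$), so the field structure cannot be the distinguishing feature. In the paper the exceptional role of $a=1$ emerges only at the end of an explicit computation, as the vanishing of the factor $(1-a)$ in relation \eqref{eqR6}, which turns that relation into an extra identity among the four-leg diagrams $H_i$ and thereby makes $\A_2^{(4)}\big(\BlModT\big)\ncong\widehat{\A}_2^{(4)}\big(\BlModT\big)$; if anything is ``special'' about $t+1+t^{-1}$ it is that $t^3=1$ in $\Al$, not that $\Al$ is a field. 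Finally, your setup with a single symmetric $2\times2$ Gram matrix silently assumes $\Al$ is cyclic: the $\Q$-dimension-two hypothesis also allows the non-cyclic module $\Qt/(t+1)\oplus\Qt/(t+1)$, which the paper must treat separately (Section \ref{secnoncyclic}) with a different basis, a different automorphism group, and a different surjectivity outcome. Your closing expectation---an explicit kernel generator given by a combination of $\H$--diagrams---does match Proposition \ref{proptwocopies}, but nothing in the proposal gets you there.
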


It follows that, in degree 2, Kricker and Lescop invariants are
indeed universal and equivalent for $\Q$SK--pairs with a Blanchfield
module which is either of $\Q$--dimension two or a direct sum of isomorphic Blanchfield modules of $\Q$--dimension two, 
except in one exceptional case. 
But the most interesting, though unexpected, outcome of the above
theorem is this latter exceptional case---namely the case of a Blanchfield module which
is a direct sum of two isomorphic Blanchfield modules of order
$t+1+t^{-1}$---for which the map $\psi_2$ is \emph{not} injective.
The kernel of $\psi_2$ in this situation is explicited in Proposition
\ref{proptwocopies}. A topological realization $C$ is given in Figure \ref{fig:CounterExample}: 
$C$ is a linear combination of $\Q$SK-pairs whose class in $\G_2(\Al,\bl)$ is the image by $\varphi_2$ of a generator of the kernel of $\psi_2$. 
This leads to two alternatives. Either $C$ has topological reasons to vanish in $\G_2(\Al,\bl)$, then the map $\varphi_2$ itself is not injective and some more efforts should be done to understand the combinatorial nature of
$\G_n(\Al,\bl)$; or the Kricker and Lescop invariants do not induce,
in general, injective maps on $\G_n^b(\Al,\bl)$, suggesting the
existence of some yet unknown finite type invariants in this
setting. In both cases, the discussion is recentered on the explicited
counterexample which appears as a key example that should be studied further.

\begin{figure}
 \[
(M,K):=\vcenter{\hbox{\includegraphics{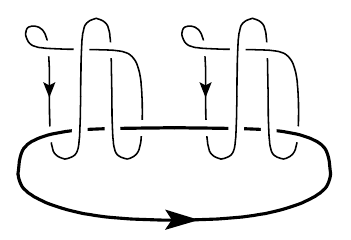}}}
\]
  \begin{eqnarray*}
C&:=&2\vcenter{\hbox{\includegraphics{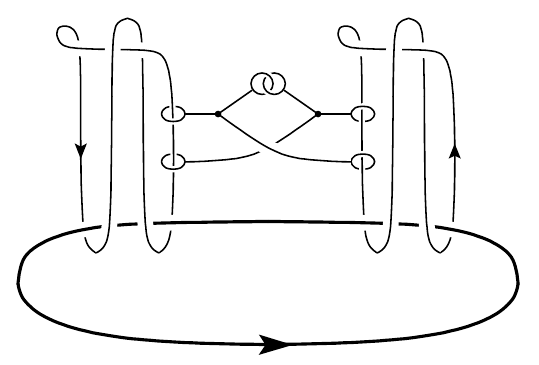}}}\ +\ \vcenter{\hbox{\includegraphics{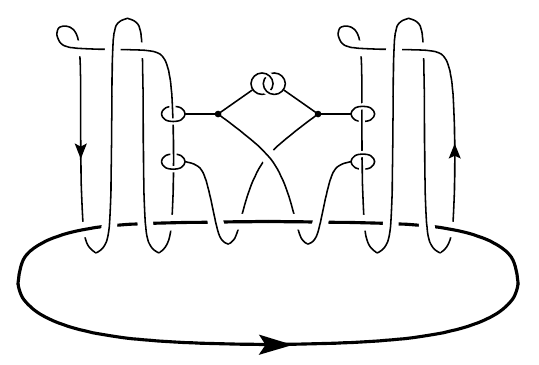}}}\\
    &&\hspace{2cm}-2\vcenter{\hbox{\includegraphics{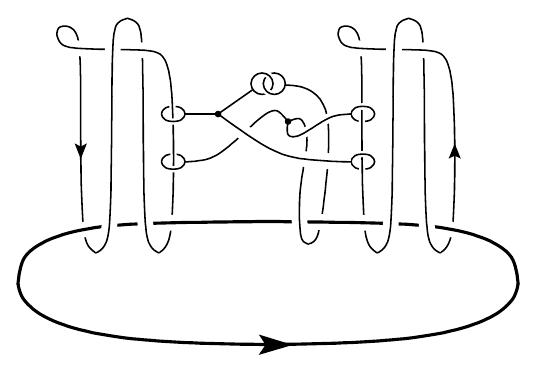}}}\ -\ \vcenter{\hbox{\includegraphics{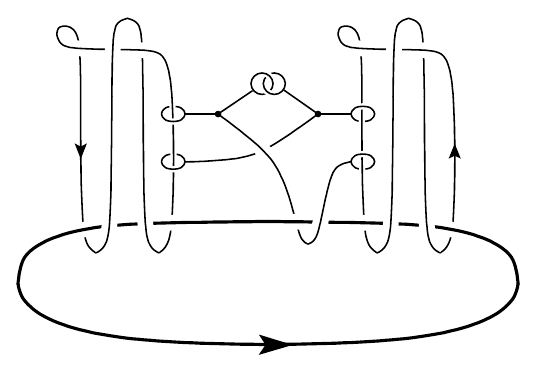}}}
  \end{eqnarray*}
\hspace{1cm}where $\vcenter{\hbox{\includegraphics{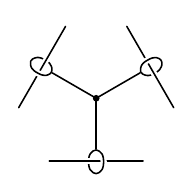}}}$ stands for $\vcenter{\hbox{\includegraphics{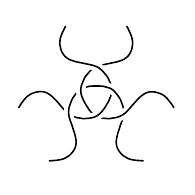}}}$

  \caption{A topological realization for a generator of the kernel of
    $\psi_2$\vspace{1ex}\\{\footnotesize Each picture represents the $\Q$SK--pair
      obtained by considering the copy of the thick unknot in the
      rational homology 3--sphere obtained by $0$--surgery on the
      other two knots. The sum corresponds to the image by
      $\varphi_2$ of the generator of $\textrm{Ker}(\psi_2)$ given in
      Proposition \ref{proptwocopies}.
      There is indeed a correspondence between the four $\H$--diagrams
      in the expression of this generator and the four terms in $C$,
      which are all of the form $(M,K)(T_1)(T_2)$ where $T_1$ and
      $T_2$ denote the two tripod graphs and $Y(T)$ denotes the result
      of the borromean surgery along $T$ on $Y$. More precisely, each
      $\H$--diagram is sent to 
      $(M,K)-(M,K)(T_1)-(M,K)(T_2)+(M,K)(T_1)(T_2)$, but $(M,K)(T_1)=(M,K)(T_2)=(M,K)$.
      See \cite{M1} for the computation of the Alexander module of $(M,K)$, \cite[Lemma 2.1]{GGP} 
      for the explicit action of the tripod graphs and \cite{M7} for other definitions and details.}}
  \label{fig:CounterExample}
\end{figure}

\paragraph{Acknowledgments.}
This work has been initiated while the second author was visiting the
first one in Aix--Marseille Université, courtesy of the ANR research project “VasKho” ANR-11-JS01-00201.
Hence, it has been carried out in the framework of Archimède Labex (ANR-11-LABX-0033) and of the A*MIDEX project (ANR-11-IDEX-0001-02), 
funded by the ``Investissements d'Avenir" French Government programme managed by the French National Research Agency (ANR).
The second author is supported by a Postdoctoral Fellowship of the Ja\-pan Society for the Promotion of Science. 
She is grateful to Tomotada Ohtsuki and the Research Institute for Mathematical Sciences for their support. 
While working on the contents of this article, she has also been supported by the Italian FIRB project ``Geometry and topology of low-dimensional
manifolds'', RBFR10GHHH, and by the University of Bourgogne.

\section{Definitions and strategy}

\subsection{Definitions}

\paragraph{Blanchfield modules.}
A \emph{Blanchfield module} is a pair $\BlMod$ such that:
\begin{enumerate}[label=(\roman*)]
 \item $\Al$ is a finitely generated torsion $\Qt$-module;
 \item\label{item:1-t} multiplication by $(1-t)$ defines an isomorphism of $\Al$;
 \item\label{item:Hermitian} $\bl:\Al\times\Al\to\fract{\Q(t)}{\Qt}$ is a non-degenerate hermitian form, {\it i.e.} 
  $\bl(\eta,\gamma)(t)=\bl(\gamma,\eta)(t^{-1})$, $\bl(P(t)\gamma,\eta)=P(t)\bl(\gamma,\eta)$, and if $\bl(\gamma,\eta)=0$ for all $\eta\in\Al$, then $\gamma=0$. 
\end{enumerate}

Since $\Qt$ is a principal ideal domain, there is a well-defined (up to multiplication by an invertible element of $\Qt$) annihilator $\delta\in\Qt$ for $\Al$. 
Condition \ref{item:1-t} implies that $\delta(1)\neq 0$ and Condition
\ref{item:Hermitian} that $\delta$ is symmetric, {\it i.e.} $\delta(t^{-1})=\upsilon(t)\delta(t)$ 
with $\upsilon(t)$ invertible in $\Qt$; see \cite[Section 3.2]{M1} for
more details. Moreover, it follows from $\bl$ being hermitian that
$\bl(\gamma,\eta)\in \frac{1}{P}\Qt$ if $\gamma$ has order $P$. 

In this paper, we focus on Blanchfield modules of $\Q$--dimension
$2$. In this case, either $\Al$ is cyclic, or it is a direct sum of two $\Qt$--modules 
with the same order. In this latter case, it follows from $\delta$
being symmetric and $\delta(1)\neq0$ that $\delta(t)=t+1$.

\paragraph{Spaces of $\BlMod$--colored diagrams.}

Fix a Blanchfield module $\BlMod$ and let $\delta\in\Qt$ be the annihilator of $\Al$.
An \emph{$\BlMod$--colored diagram} $D$ is a uni-trivalent graph without
strut ($\Strut$), given with:
\begin{itemize}
 \item an orientation for each trivalent vertex, that is a cyclic order 
  of the three half-edges that meet at this vertex; 
 \item an orientation and a label in $\Qt$ for each edge;
 \item a label in $\Al$ for each univalent vertex;
 \item a rational fraction $f_{vv'}^D(t)\in\frac{1}{\delta}{\Qt}$ for
  each pair $(v,v')$ of distinct univalent vertices of $D$, satisfying
  $f_{v'v}^D(t)=f_{vv'}^D(t^{-1})$ and $f_{vv'}^D(t)\ mod\ \Qt=\bl(\gamma_v,\gamma_{v'})$, where
  $\gamma_v$ and $\gamma_{v'}$ are the labels of $v$ and $v'$ respectively.   
\end{itemize}
In the pictures, the orientation of trivalent vertices is given by $\Orientation$. 
When it does not seem to cause confusion, we write $f_{vv'}$ for
$f_{vv'}^D$. We also call \emph{legs} the univalent vertices. For 
$k\in\N$, we call \emph{$k$--legs diagram} and \emph{$k_\tinf$--legs 
diagram} an $\BlMod$--colored diagram with, respectively, exactly and 
at most $k$ legs.
The \emph{degree} of a colored diagram is the number of trivalent
vertices of its underlying graph; the unique degree $0$ diagram is the
empty diagram. 

\begin{figure}
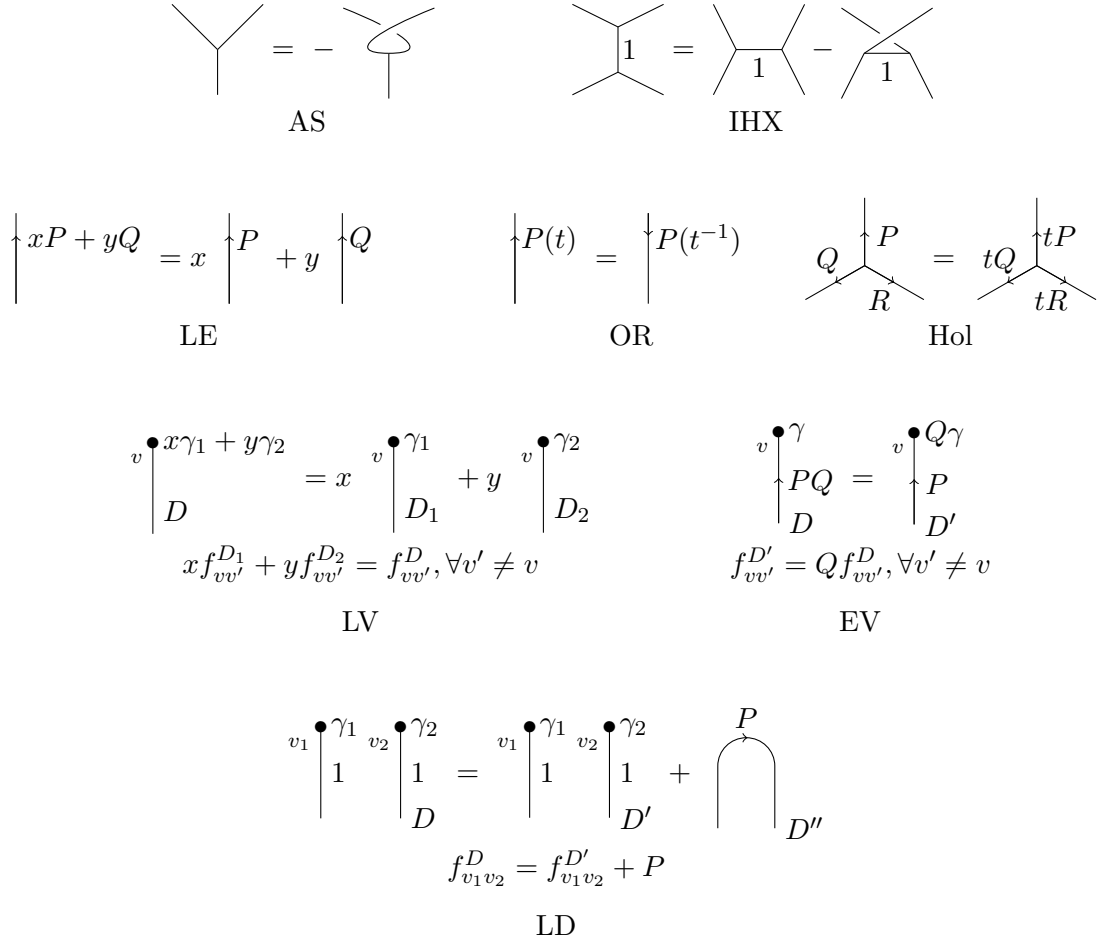

  \begin{gather*}
    \begin{array}{ccc}
      \ASU=\ -\ASD & \hspace{1cm} & \IHXU=\ \IHXD-\IHXT\\[.5cm]
      \AS&&\IHX
    \end{array}\\[.6cm]
    \begin{array}{ccccc}
      \LEU=x\ \ \LED+y\ \ \LET & \hspace{1cm} &
      \ORU=\ \ \ORD &&  \HolU=\ \HolD\\[.5cm]
      \LE && \OR &&\Hol
    \end{array}\\[.6cm]
    \begin{array}{ccc}
      \LVU=x\ \LVD+y\ \LVT & \hspace{1cm} &
      \EVU=\EVD\\[.5cm]
      xf_{vv'}^{D_1}+yf_{vv'}^{D_2}=f_{vv'}^{D},\forall v'\neq v
      && f_{vv'}^{D'}=Qf_{vv'}^{D},\forall v'\neq v\\[.3cm]
      \LV && \EV
    \end{array}\\[.6cm]
     \begin{array}{c}
      \LDU=\LDD+\ \ \LDT\\[.5cm]
      f_{v_1v_2}^{D}=f_{v_1v_2}^{D'}+P\\[.3cm]
      \LD
    \end{array}
  \end{gather*}
\caption{Relations on colored diagrams\\{\footnotesize In these 
    pictures, $x,y\in\Q$, $P,Q,R\in\Qt$ and $\gamma,\gamma_1,\gamma_2\in\Al$.}} \label{figrel}
\end{figure}

The automorphism group $\Aut\BlMod$ of the Blanchfield module $\BlMod$ acts on $\BlMod$--colored diagrams by evaluation of an automorphism on the labels of all the legs of a diagram simultaneously.
For $n\geq0$, we set: 
\[
\A_n\BlMod=\frac{\Q \big\langle\textnormal{$\BlMod$--colored
    diagrams of degree $n$}\big\rangle}{\Q \big\langle\AS, \IHX, \LE, \OR, \Hol, \LV, \EV, \LD,
    \Aut\big\rangle},
\] 
where the relations $\AS$ (anti-symmetry), $\IHX$, $\LE$ (linearity for edges), $\OR$ (orientation reversal), $\Hol$ (holonomy), 
$\LV$ (linearity for vertices), $\EV$ (edge-vertex) and $\LD$ (linking difference: this relation deals with the rational fractions associated to pairs of vertices) are described in Figure \ref{figrel} and $\Aut$ is
the set of relations $D=\zeta.D$ where $D$ is a $\BlMod$--colored diagram and $\zeta\in\Aut\BlMod$. 
Since the opposite of the identity is an automorphism of $\BlMod$, we have $\A_{2n+1}\BlMod=0$ for all $n\geq 0$.

\paragraph{Spaces of $\delta$--colored diagrams.}

Let $\delta\in\Qt$. A \emph{$\delta$--colored diagram} is a trivalent graph whose vertices 
are oriented and whose edges are oriented and labelled by $\frac{1}{\delta}\Qt$. The \emph{degree} of a $\delta$--colored diagram is the number of its vertices. 
For every integer $n\geq0$, set:
\[
\A_n(\delta)=\frac{\Q\big\langle\textnormal{$\delta$--colored diagrams of
    degree
    $n$}\big\rangle}{\Q\big\langle\AS,\IHX,\LE,\OR,\Hol,\Hol'\big\rangle},
\]
where the relation $\Hol'$ is represented in Figure \ref{fighol'} and the relations $\AS$, $\IHX$, $\LE$, $\OR$, $\Hol$ are represented in Figure \ref{figrel} but with edges now labelled in $\frac{1}{\delta}\Qt$. 
\begin{figure}
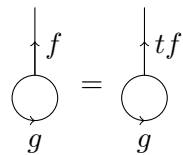

\[
\HolPU=\ \HolPD
\]
\caption{Relation $\Hol'$\\{\footnotesize In this picture, $f,g\in\frac{1}{\delta}\Qt$.}} \label{fighol'}
\end{figure}
Note that in the case of $\A_n\BlMod$, the relation $\Hol'$ is induced by the relations $\Hol$, $\EV$, $\LD$ and $\LV$, as shown in Figure~\ref{figrecovHolp}, where $\LV$ is used to see that one diagram is trivial at each application of $\LD$. 
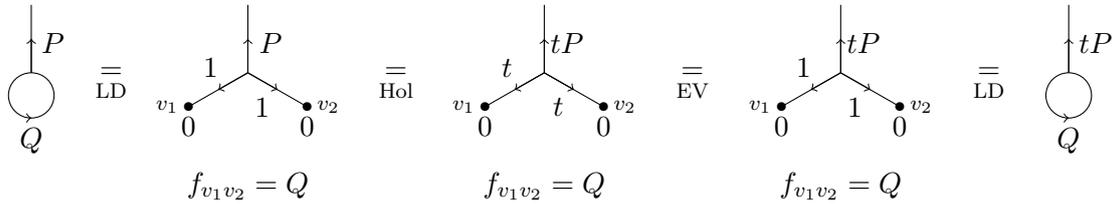
\begin{figure}
\begin{center}
\begin{tikzpicture}[scale=0.3]
 \begin{scope}
  \draw (0,0) -- (0,3);
  \draw (0,-1) circle (1);
  \draw[->] (-0.1,-2) -- (0,-2) node[below] {$Q$};
  \draw[->] (0,0) -- (0,1.5);
  \draw (0,1.3) node[right] {$P$};
 \end{scope}
 \draw (3.5,0) node {$=$} node[below] {\scriptsize LD};
 \begin{scope} [xshift=9.5cm]
  \edge{0}{$P$}
  \edge{120}{$1$}
  \edge{240}{$1$}
  \draw[rotate=120] (0,3) node {$\scriptstyle{\bullet}$} node[below] {$0$} node[left] {$\scriptstyle{v_1}$};
  \draw[rotate=240] (0,3) node {$\scriptstyle{\bullet}$} node[below] {$0$} node[right] {$\scriptstyle{v_2}$};
  \draw (0,-5) node {$f_{v_1v_2}=Q$};
 \end{scope}
 \draw (16,0) node {$=$} node[below] {\scriptsize Hol};
 \begin{scope} [xshift=22.5cm]
  \edge{0}{$tP$}
  \edge{120}{$t$}
  \edge{240}{$t$}
  \draw[rotate=120] (0,3) node {$\scriptstyle{\bullet}$} node[below] {$0$} node[left] {$\scriptstyle{v_1}$};
  \draw[rotate=240] (0,3) node {$\scriptstyle{\bullet}$} node[below] {$0$} node[right] {$\scriptstyle{v_2}$};
  \draw (0,-5) node {$f_{v_1v_2}=Q$};  
 \end{scope}
 \draw (29,0) node {$=$} node[below] {\scriptsize EV};
 \begin{scope} [xshift=35.5cm]
  \edge{0}{$tP$}
  \edge{120}{$1$}
  \edge{240}{$1$}
  \draw[rotate=120] (0,3) node {$\scriptstyle{\bullet}$} node[below] {$0$} node[left] {$\scriptstyle{v_1}$};
  \draw[rotate=240] (0,3) node {$\scriptstyle{\bullet}$} node[below] {$0$} node[right] {$\scriptstyle{v_2}$};
  \draw (0,-5) node {$f_{v_1v_2}=Q$};  
 \end{scope}
 \draw (42,0) node {$=$} node[below] {\scriptsize LD};
 \begin{scope} [xshift=45.5cm]
  \draw (0,0) -- (0,3);
  \draw (0,-1) circle (1);
  \draw[->] (-0.1,-2) -- (0,-2) node[below] {$Q$};
  \draw[->] (0,0) -- (0,1.5);
  \draw (0,1.3) node[right] {$tP$};
 \end{scope}
\end{tikzpicture}
\end{center} \caption{Recovering the relation Hol' from Hol, EV, LD and LV} \label{figrecovHolp}
\end{figure}

To an $\BlMod$--colored diagram $D$ of degree $n$, we associate a $\delta$--colored diagram $\psi_n(D)$ as follows. 
Denote by $V$ the set of legs of $D$. Define a \emph{pairing of $V$} as an involution of $V$ 
with no fixed point. For every such pairing $p$, define $D_p$ as the diagram obtained by replacing, in $D$, every pair $\big(v,p(v)\big)$ 
of associated legs---and their adjacent edges---by a colored edge as indicated in Figure \ref{figgroup}. 
\begin{figure}
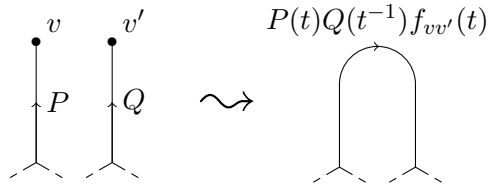
 
\[
\PairingU \hspace{.6cm}\raisebox{-1.5ex}{\huge $\leadsto$} \PairingD
\]
\caption{Pairing of two vertices} \label{figgroup}
\end{figure}
Now set:
\[
\psi_n(D)=\sum_{p\in\mathfrak{p}} D_p,
\]
where $\mathfrak{p}$ is the set of pairings of $V$. Note that, if $D$ has an odd number of legs, then $\mathfrak p$ is empty and $\psi_n(D)=0$. 
One can easily check that this assignment yields a well-defined $\Q$-linear map $\psi_n:\A_n\BlMod\to\A_n(\delta)$.

\subsection{Strategy} \label{secstrategy}

\paragraph{Getting rid of $\A_n(\delta)$.}
The map $\psi_n$ involves two diagram spaces defined by different kind of diagrams, namely $\BlMod$--colored diagrams and $\delta$--colored diagrams. 
The following result will allow us to work with $\BlMod$--colored diagrams only. 
\begin{theorem}[{\cite[Theorem 2.12]{M7}}] \label{th3n}
 Let $n$ and $N$ be non negative integers such that $N\geq\frac{3n}2$. Fix a Blanchfield module $\BlMod$ with annihilator $\delta$ and define the Blanchfield 
 module $\BlModN$ as the direct sum of $N$ copies of $\BlMod$. Then $\delta$ is also the annihilator of $\BlModN$ and the map 
 $\psib_n:\A_n\left(\BlModN\right)\to\A_n(\delta)$ is an isomorphism.
\end{theorem}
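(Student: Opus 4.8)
The plan is to construct an explicit inverse $s\colon\A_n(\delta)\to\A_n(\BlModN)$ by cutting each edge of a $\delta$--colored diagram into a pair of legs and distributing these pairs among distinct copies of $\BlMod$, and then to check that $s$ and $\psib_n$ are mutually inverse. The statement that $\delta$ is the annihilator of $\BlModN$ is immediate, since the annihilator of a direct sum is the least common multiple of the annihilators of the summands---all equal to $\delta$ here---and the conditions $\delta(1)\neq0$ and $\delta$ symmetric pass trivially to the sum.

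First I would treat surjectivity. A trivalent graph with $n$ vertices has exactly $\frac{3n}2$ edges, so the hypothesis $N\geq\frac{3n}2$ furnishes its own copy of $\BlMod$ for each edge of a given degree--$n$ diagram $G$. I lift $G$ to $s(G)$ by cutting every edge into two legs, coloring the two legs issued from one edge by elements of the copy of $\BlMod$ attached to that edge and recording the edge label through the fraction $f_{vv'}$ of that pair, while setting $f_{vv'}=0$ for legs coming from distinct edges (legitimate, as these lie in orthogonal summands, where $\bl$ vanishes). Relations $\LV$, $\EV$ and $\LD$ provide enough freedom to realize any prescribed label of $\frac1\delta\Qt$ on a single pair, using non-degeneracy of $\bl$. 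Because the form on $\BlModN$ is the orthogonal sum of the forms on the summands, in $\psib_n(s(G))$ every pairing that joins two legs from different copies yields an edge labelled $0$, hence a diagram killed by $\LE$; only the diagonal pairing survives, returning $G$. Thus $\psib_n\circ s=\mathrm{id}$, and $\psib_n$ is onto.

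Next I would establish injectivity by showing that $s$ is also a left inverse, i.e. that every $\BlModN$--colored diagram $D$ equals $s(\psib_n(D))$ in $\A_n(\BlModN)$. Using $\LV$, I first expand all leg labels over a fixed $\Q$--basis of $\Al^{\oplus N}$, reducing to monomial diagrams whose legs each live in a single copy. The core step is then to bring such a diagram into the standard lifted shape---paired legs in pairwise distinct copies, all cross-copy fractions zero---by means of the automorphisms of $\BlModN$, namely the permutations of the $N$ summands and the isometries mixing them, combined with $\EV$ and $\LD$ to adjust the attached fractions. Once a monomial diagram is in this shape it is visibly $s$ of the $\delta$--colored diagram obtained by reading off its surviving fractions, and distinct standard shapes have independent images; this yields $s\circ\psib_n=\mathrm{id}$. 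It remains to see that $s$ is well defined on $\A_n(\delta)$, which reduces to checking the relations $\AS$, $\IHX$, $\LE$, $\OR$, $\Hol$ and $\Hol'$: all but $\Hol'$ lift directly, and $\Hol'$ is recovered from $\Hol$, $\EV$, $\LD$ and $\LV$ exactly as in Figure~\ref{figrecovHolp}.

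The hard part will be the normalization in the injectivity step: one must verify that the automorphism relations of $\BlModN$, together with $\LV$, $\EV$ and $\LD$, genuinely suffice to redistribute the legs of an arbitrary monomial diagram into pairwise distinct copies without creating spurious terms, and it is precisely here that the bound $N\geq\frac{3n}2$ is essential---a degree--$n$ diagram carries $\frac{3n}2$ edges and thus up to $3n$ legs, and only with at least $\frac{3n}2$ orthogonal summands can every intended pair be isolated in its own copy so that no unwanted pairing contributes. Controlling the interplay of the $\Aut$ and linearity relations during this separation is the technical heart of the argument.
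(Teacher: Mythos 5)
The paper does not actually prove this statement: it is imported wholesale from \cite{M7} (Theorem 2.12 there), and the only related machinery reproduced in the present paper is the distributed--diagrams presentation of Proposition \ref{proppresentation}. So there is no in-paper proof to compare yours with; I can only measure your outline against what that presentation has to deliver. Your overall strategy --- cut each of the $\frac{3n}2$ edges into a pair of legs housed in its own copy of $\BlMod$, note that orthogonality of the summands forces every non-diagonal pairing to produce a $0$--labelled edge killed by $\LE$, and locate the bound $N\geq\frac{3n}2$ in the separation of pairs into distinct copies --- is the natural one and is consistent with that machinery; the surjectivity half is essentially right as sketched.

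There are, however, two concrete gaps. First, $s\circ\psib_n=\mathrm{id}$ is not ``visible'' even on a distributed diagram $D$: there $\psib_n(D)$ is the diagonal closure $D_{p_0}$, and $s(D_{p_0})$ cuts \emph{all} edges of $D_{p_0}$, including the internal edges of $D$ that never came from a leg pair, so $s(D_{p_0})$ has $3n$ legs while $D$ may have fewer. You must additionally prove that re-cutting such an internal edge is the identity in $\A_n\left(\BlModN\right)$ (this follows from Lemma \ref{lem:0LabelledVertex} applied to a $0$--labelled leg pair, but it is a step, not an observation), and, relatedly, that $s$ is well defined on an edge label $f\in\frac1\delta\Qt$ independently of how $f$ is split between a polynomial part and a part realized as a linking $f_{vv'}$; even realizing an arbitrary class of $\fract{\frac1\delta\Qt}{\Qt}$ as a value of $\bl$ requires the (true but unstated) fact that the image of $\bl$ generates this quotient over $\Qt$ because $\delta$ is the exact annihilator and $\bl$ is non-degenerate. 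Second, and more seriously, the step you yourself flag as the technical heart --- reducing an arbitrary monomial diagram to distributed form by the $\Aut$, $\LV$, $\EV$ and $\LD$ relations, which is precisely where $N\geq\frac{3n}2$ is used --- is exactly the content of \cite[Propositions 7.11 \& 7.12]{M7} (Proposition \ref{proppresentation} here), and you leave it entirely unexecuted. As written, the proposal is a plausible roadmap whose hard parts are all deferred rather than proved.
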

This result provides a rewritting of the map $\psi_n$ in the general
case. There is indeed a natural map
$\iota_n:\A_n\BlMod\to\A_n\left(\BlModN\right)$ defined on each
diagram by interpreting the labels of its legs as
elements of the first copy of $\BlMod$ in $\BlModN$, which makes the following diagram commute:
\[
\vcenter{\hbox{\begin{tikzpicture} [xscale=1.4,yscale=0.8]
 \draw (2,2) node {$\A_n\left(\BlModN\right)$};
 \draw (2,-2) node {$\A_n(\delta)$};
 \draw (0,0) node {$\A_n\BlMod$};
 \draw[->] (0.4,0.5) -- (1.6,1.5); \draw (0.9,1.3) node {$\iota_n$};
 \draw[->] (0.4,-0.5) -- (1.6,-1.5); \draw (0.9,-1.3) node {$\psi_n$};
 \draw[->] (2,1.5) -- (2,-1.5); \draw (2.3,0) node {$\psib_n$}; \draw (1.8,0) node {$\cong$};
\end{tikzpicture}}}.
\]
In particular, the injectivity of $\psi_n$ is equivalent to the injectivity of
$\iota_n$, what does not involve $\A_n(\delta)$ anymore. More generally, there is a natural map
$\iota_n^\ell:\A_n\left(\BlModl\right)\to\A_n\left(\BlModN\right)$ defined similarly to $\iota_n$. When it does not seem to cause confusion, $\iota_n^\ell$ is simply denoted $\iota_n$. When $n=2$, for every $N\geq3$, we have:
\[
\vcenter{\hbox{\begin{tikzpicture}
 \draw (-3.5,2) node {$\A_2\BlMod$} (0,2) node {$\A_2\big(\BlModD\big)$} (4,2) node {$\A_2\left(\BlModN\right)$};
 \draw[->] (-2.5,2) -- (-1.3,2); \draw[->] (1.3,2) -- (2.5,2); \draw (1.8,2) node[above] {$\iota_2^2$};
 \draw[->] (-3.2,2.4) .. controls +(2,1) and +(-2,1) .. (3.2,2.4); \draw (0,3.5) node {$\iota_2^1$};
 \draw[->] (-3.2,1.6) -- (-0.5,0.3); \draw[->] (0,1.6) -- (0,0.4); \draw[->] (3.2,1.6) -- (0.5,0.3);
 \draw (2.4,0.8) node {$\cong$}; \draw (-2.4,0.8) node {$\psi_2^1$}; \draw (0,1) node[right] {$\psi_2^2$};
 \draw (0,0) node {$\A_2(\delta)$};
\end{tikzpicture}}}.
\]
We focus on determining whether the maps $\iota_2^1$ and $\iota_2^2$ are injective or not. For that, it is sufficient to consider the case $N=3$.

\paragraph{Filtration by the number of legs.}
The second point in our strategy is to consider the filtration induced
by the number of legs. For $k=0,\dots,3n$, let $\A_n^{(k)}\BlMod$ be the subspace of $\A_n\BlMod$ generated by $k_\tinf$--legs diagrams and set:
\[
\widehat{\A}_n^{(k)}\BlMod=\frac{\Q \big\langle\textnormal{$k_\tinf$--legs diagrams
    of degree $n$}\big\rangle}{\Q \langle\AS,\IHX,\LE,\OR,\Hol,\LV,\EV,\LD,\Aut\rangle}.
\]
Recall that all these diagram spaces are trivial when $n$ is odd. Moreover, in a uni-trivalent graph, the numbers of univalent and trivalent vertices 
have the same parity, thus $\A_{2n}^{(2k+1)}\BlMod=\A_{2n}^{(2k)}\BlMod$ and $\widehat{\A}_{2n}^{(2k+1)}\BlMod\cong\widehat{\A}_{2n}^{(2k)}\BlMod$. 
Obviously, $\widehat{\A}_n^{(3n)}\BlMod=\A_n\BlMod= \A_n^{(3n)}\BlMod$. However, a subtlety of the
structure of the spaces $\A_n\BlMod$ is that the natural surjection $\widehat{\A}_n^{(k)}\BlMod\twoheadrightarrow\A_n^{(k)}\BlMod$ is not, in general, an isomorphism. 
A counterexample is given in Proposition \ref{propbasis} (\ref{item:Prop5}.\ref{item:Prop52}), which underlies the case where $\iota_2^2$ is not injective. 

\paragraph{Reduction of the presentations.}
To study the injectivity status of the map $\iota_2$, we first study the structure of the space $\A_2\left(\BlModT\right)$ to determine if 
$\A_2^{(k)}\left(\BlModT\right)$ is isomorphic to $\widehat{\A}_2^{(k)}\left(\BlModT\right)$ for $k=2,4$. If we have such isomorphisms, 
then Corollary~\ref{corinjectivity} states that the map $\iota_n$ is injective. Otherwise, we have to perform a similar study of the structure of $\A_2\BlMod$. 

To understand the structures of these diagram spaces, the strategy is to simplify the
given presentations by restricting simultaneously the set of generators and the set of
relations. This reduction process is initialized in Section \ref{secreduction} for a general 
Blanchfield module and pursued in the next sections for each specific case.

\section{Preliminary results} \label{secpreliminaries}

\subsection{Distributed diagrams}

We define notations that we will use throughout the rest of the paper. 
Let $\BlMod$ be a Blanchfield module with annihilator $\delta$. For a positive integer $N$, 
set $\displaystyle\BlModN=\bigoplus_{i=1}^N(\Al_i,\bl_i)$, where each $(\Al_i,\bl_i)$ is an isomorphic copy of $\BlMod$, 
given with a fixed isomorphism $\xi_i:\Al\to\Al_i$ that respects the Blanchfield pairing. Define the \emph{permutation automorphisms} $\xi_{ij}$ 
of $\BlModN$ as $\xi_j\circ\xi_i^{-1}$ on $\Al_i$, $\xi_i\circ\xi_j^{-1}$ on $\Al_j$ and identity on the other
$\Al_\ell$'s. Define $\Aut_\xi$ as the restriction of the $\Aut$ relation to these permutation automorphisms. 
Also denote by $\Aut_t$ and $\Aut_{-1}$ the restrictions of the $\Aut$ relation to the
automorphisms that are the multiplication by $t$ and $-1$ respectively on one $\Al_i$ and identity on the other $\Al_j$'s. 
If $\BlMod$ is cyclic, then define $\Aut_{res}$ as the union of
$\Aut_\xi$, $\Aut_t$ and $\Aut_{-1}$.
Otherwise, define $\Aut_{res}$ as the $\Aut$ relation restricted to permutation automorphisms and to automorphisms fixing one
$\Al_i$ setwise and the others pointwise.

Finally, for $\ell\geq0$, we say that an $\left(\BlModl\right)$--colored diagram $D$ is \emph{distributed} if there is a partition of the legs 
of $D$ into a disjoint union of pairs $\sqcup_{i\in I}\{v_i,w_i\}$ and an injective map $\sigma:I\to\{1,\dots,\ell\}$ 
such that the legs $v_i$ and $w_i$ are labelled in $\Al_{\sigma(i)}$ and the linking between vertices in different pairs is trivial.

\begin{proposition}[{\cite[Propositions 7.11 $\&$ 7.12]{M7}}] \label{proppresentation}
 For all non negative integers $n$, $k$ and $\ell$  such that $\ell\geq\frac k2$:
\[
\widehat{\A}_n^{(k)}\left(\BlModL\right)\cong\frac{\Q\Big\langle\textnormal{distributed
    $k_\tinf$--legs diagrams of degree $n$}\Big\rangle}
 {\Q
   \big\langle\AS,\IHX,\LE,\OR,\Hol,\LV,\EV,\LD,\Aut_{res}\big\rangle}.
\]
In particular, for all integers $N\geq\frac{3n}{2}$:
 \[
 \A_n\left(\BlModN\right)\cong\frac{\Q\Big\langle\textnormal{distributed
      $\left(\BlModN\right)$--colored diagrams of degree $n$}\Big\rangle}
 {\Q \big\langle\AS,\IHX,\LE,\OR,\Hol,\LV,\EV,\LD,\Aut_{res}\big\rangle}.
 \]
\end{proposition}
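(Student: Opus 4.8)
The plan is to establish the general isomorphism and then read off the displayed special case by taking $k=3n$ and $\ell=N$: the hypothesis $N\geq\frac{3n}{2}$ becomes $\ell\geq\frac{k}{2}$, and since a degree--$n$ diagram has at most $3n$ legs we have $\widehat{\A}_n^{(3n)}(\BlModN)=\A_n(\BlModN)$, so the second formula is just the first one in top leg-degree. Write $B$ for the reduced presentation, that is the quotient of $\Q\langle\text{distributed }k_\tinf\text{--legs diagrams of degree }n\rangle$ by $\AS,\IHX,\LE,\OR,\Hol,\LV,\EV,\LD,\Aut_{res}$. There is an evident $\Q$--linear map $\Phi:B\to\widehat{\A}_n^{(k)}(\BlModL)$ sending a distributed diagram to its own class; it is well defined because $\Aut_{res}\subseteq\Aut$ and every other defining relation of $B$ is also a defining relation of $\widehat{\A}_n^{(k)}(\BlModL)$. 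Everything reduces to proving that $\Phi$ is both surjective and injective.

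For surjectivity I would set up a \emph{distribution process} that rewrites an arbitrary $k_\tinf$--legs diagram, modulo the full relation set of $\widehat{\A}_n^{(k)}(\BlModL)$, as a $\Q$--combination of distributed diagrams. Using $\LV$ together with the direct-sum splitting $\Al^{\oplus\ell}=\bigoplus_i\Al_i$, I first expand each leg label into its summand components, reducing to diagrams all of whose legs are labelled inside a single $\Al_i$. The genuinely delicate step is to separate the leg-pairs into \emph{distinct} summands: since individual legs cannot be moved one at a time (the relation $\Aut$ acts on all legs simultaneously, and $\LV$ only splits labels), I would use a polarization argument, applying the identity $D=\zeta\cdot D$ for a suitable mixing automorphism $\zeta$ of a pair of summands and then re-expanding with $\LV$; this trades a diagram whose pairs are concentrated in few summands for the desired distributed diagram plus terms in which the pairs are more spread out, and a suitable induction---made possible precisely by the hypothesis $\ell\geq\frac{k}{2}$, which guarantees enough free summands---closes the argument. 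Finally, two legs lying in different summands pair trivially modulo $\Qt$ because the Blanchfield form on $\BlModL$ is block diagonal, so each cross-pair fraction $f_{vv'}$ lies in $\Qt$; the relation $\LD$, assisted by $\EV$, cancels these polynomial fractions at the cost of diagrams with strictly fewer legs, handled by downward induction on $k$.

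The main obstacle is injectivity, namely showing that every relation that holds in $\widehat{\A}_n^{(k)}(\BlModL)$ among distributed diagrams is already a consequence of the restricted relation set defining $B$. I would realize this by constructing a retraction $\Psi:\widehat{\A}_n^{(k)}(\BlModL)\to B$, extending the distribution process above $\Q$--linearly, and checking $\Psi\circ\Phi=\mathrm{id}_B$ (immediate, since a distributed diagram is fixed by the process) together with the well-definedness of $\Psi$. Well-definedness with respect to $\AS,\IHX,\LE,\OR,\Hol,\EV$ is routine, since the distribution process is natural in these relations; the real content is compatibility with the \emph{full} relation $\Aut$, whereas $B$ only carries $\Aut_{res}$. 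I would reduce this to a generating set of $\Aut(\BlModL)$, which splits into the permutation and single-summand automorphisms already present in $\Aut_{res}$---for which compatibility is built into $B$---and the remaining mixing automorphisms. The heart of the matter, and where I expect the computation to be heaviest, is to prove that once a diagram is distributed (pairs in distinct summands, trivial cross-linking) a mixing automorphism acts trivially on its class in $B$: this is exactly the polarization identity used for surjectivity, now read as an equality in $B$, and it is again the hypothesis $\ell\geq\frac{k}{2}$ together with the hyperbolic, $2\times2$--block structure of the Blanchfield form on each summand that makes the required rewriting available.
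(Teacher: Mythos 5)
The paper does not actually prove this statement: it is imported verbatim from \cite[Propositions 7.11 \& 7.12]{M7}, so there is no internal proof to compare yours against. Judged on its own terms, your proposal has the right architecture --- a map $\Phi$ from the reduced presentation $B$ to $\widehat{\A}_n^{(k)}\big(\BlModL\big)$, surjectivity via a rewriting (``distribution'') process using $\LV$, $\LD$ and $\EV$, injectivity via a retraction $\Psi$ with $\Psi\circ\Phi=\mathrm{id}_B$ --- and your derivation of the second display from the first via $k=3n$, $\ell=N$ and $\widehat{\A}_n^{(3n)}=\A_n$ is correct. The observation that cross-summand linkings lie in $\Qt$ and can be killed by $\LD$ at the cost of diagrams with fewer legs is also sound.

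There is, however, a genuine gap at exactly the point you flag as the heart of the matter. For $\Psi$ to be well defined you must show that, for a distributed diagram $D$ and a mixing automorphism $\zeta$, the distribution of $\zeta.D$ equals $D$ \emph{using only} $\Aut_{res}$ and the other relations of $B$; but your justification is to ``read the polarization identity as an equality in $B$'', and that identity was obtained in $\widehat{\A}_n^{(k)}\big(\BlModL\big)$ precisely by invoking the full relation $D=\zeta.D$ --- the very relation whose redundancy you are trying to establish. As written the argument is circular. Worse, you appeal to ``the hyperbolic, $2\times2$--block structure of the Blanchfield form on each summand'': the proposition is stated for an arbitrary Blanchfield module $\BlMod$, which need not have $\Q$--dimension two nor a hyperbolic pairing (those hypotheses enter only in the later sections of the paper), so this structural input is not available in the required generality. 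A complete proof needs (i) a generating set for $\Aut\big(\BlModL\big)$ in terms of permutation automorphisms, automorphisms supported on a single summand, and a controlled family of mixing generators, (ii) for each mixing generator an explicit verification that its action on a distributed diagram is absorbed by $\LV$, $\LD$, $\EV$ and $\Aut_{res}$, and (iii) a check that $\Psi$ does not depend on the choices made in the distribution process. None of this is supplied, which is exactly why the paper delegates the statement to \cite{M7} rather than reproving it.
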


For positive integers $\ell_1\leq \ell_2$, 
let $\widehat{\iota}_n:\widehat{\A}_n^{(k)}\left(\BlMod^{\oplus\ell_1}\right)\to\widehat{\A}_n^{(k)}\left(\BlMod^{\oplus\ell_2}\right)$ be 
the natural map defined on each diagram by interpreting the labels of its legs as elements of the first $\ell_1$ copies of $\BlMod$ in
$\BlMod^{\oplus\ell_2}$. 

\begin{corollary} \label{corpresentation}
 For all non negative integers $n$, $k$, $\ell_1$ and $\ell_2$ such that $\ell_1,\ell_2\geq\frac k2$, the map
 $\widehat{\iota}_n:\widehat{\A}_n^{(k)}\left(\BlMod^{\oplus\ell_1}\right)\to\widehat{\A}_n^{(k)}\left(\BlMod^{\oplus\ell_2}\right)$ 
 is an isomorphism.
\end{corollary}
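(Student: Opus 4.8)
The plan is to reduce everything to the distributed--diagram presentation supplied by Proposition~\ref{proppresentation}. Since $\ell_1,\ell_2\geq\frac{k}{2}$, that proposition applies to both $\BlMod^{\oplus\ell_1}$ and $\BlMod^{\oplus\ell_2}$, so each of $\widehat{\A}_n^{(k)}\big(\BlMod^{\oplus\ell_1}\big)$ and $\widehat{\A}_n^{(k)}\big(\BlMod^{\oplus\ell_2}\big)$ is presented by distributed $k_\tinf$--legs diagrams of degree $n$ modulo the common relation set $\{\AS,\IHX,\LE,\OR,\Hol,\LV,\EV,\LD,\Aut_{res}\}$. The two presentations differ only in the codomain $\{1,\dots,\ell_1\}$ versus $\{1,\dots,\ell_2\}$ of the indexing map $\sigma$ attached to a distributed diagram, and $\widehat{\iota}_n$ is precisely the map induced on generators by the inclusion $\{1,\dots,\ell_1\}\hookrightarrow\{1,\dots,\ell_2\}$. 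The crucial numerical observation is that a distributed $k_\tinf$--legs diagram partitions its legs into at most $\lfloor k/2\rfloor$ pairs, so $\sigma$ has image of size at most $\lfloor k/2\rfloor\leq\min(\ell_1,\ell_2)$; thus no distributed diagram ever needs more than $\min(\ell_1,\ell_2)$ of the available copies.

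First I would build a candidate inverse $\widehat{\rho}_n\colon\widehat{\A}_n^{(k)}\big(\BlMod^{\oplus\ell_2}\big)\to\widehat{\A}_n^{(k)}\big(\BlMod^{\oplus\ell_1}\big)$ on the level of distributed diagrams: given such a diagram $D$ with indexing map $\sigma\colon I\to\{1,\dots,\ell_2\}$, relabel the copies so that $\sigma(I)$ is carried order-preservingly onto the initial segment $\{1,\dots,|I|\}\subseteq\{1,\dots,\ell_1\}$, transporting the leg labels and the linking fractions $f_{vv'}$ via the fixed pairing-preserving isomorphisms $\xi_i$. This is possible precisely because $|I|\leq\lfloor k/2\rfloor\leq\ell_1$. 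The point is that any two such relabelings of a given $D$ differ by a permutation of copies, hence by a permutation automorphism $\xi_{ij}$; since $\Aut_\xi\subseteq\Aut_{res}$, the two results agree in the quotient, so $\widehat{\rho}_n$ is independent of the auxiliary choices.

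Next I would check that $\widehat{\rho}_n$ respects the defining relations, so that it descends to the quotient. Here the key feature is that all of $\AS,\IHX,\LE,\OR,\Hol,\LV,\EV,\LD$ are \emph{local}: each only involves the combinatorics of the underlying graph together with the labels carried by its edges and legs, and each transforms distributed diagrams into linear combinations of distributed diagrams supported on the same set of copies. Relabeling copies commutes with every such operation because the copies are identified by the isomorphisms $\xi_i$; and relabeling commutes with the remaining $\Aut_{res}$ relations because it is itself implemented by permutation automorphisms. This compatibility is the step I expect to require the most care, since one must run through each relation of Figure~\ref{figrel} and verify that shifting the active copies to an initial segment sends a relation instance to a relation instance. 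Granting this, $\widehat{\rho}_n$ is a well-defined $\Q$--linear map.

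Finally I would verify that $\widehat{\rho}_n$ is a two-sided inverse of $\widehat{\iota}_n$. For a distributed diagram $D$ already supported on the first $\ell_1$ copies, $\widehat{\iota}_n(D)$ is the same diagram viewed in $\BlMod^{\oplus\ell_2}$, and $\widehat{\rho}_n$ then shifts its support to an initial segment; the net effect is a permutation of copies inside $\{1,\dots,\ell_1\}$, so $\widehat{\rho}_n\circ\widehat{\iota}_n(D)=D$ in the quotient by $\Aut_\xi$. Symmetrically, for a distributed diagram $D$ in $\BlMod^{\oplus\ell_2}$, the composite $\widehat{\iota}_n\circ\widehat{\rho}_n$ first relabels the support to an initial segment and then includes it back, which again differs from $D$ by a permutation automorphism and so equals $D$ in the quotient. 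Hence $\widehat{\iota}_n\circ\widehat{\rho}_n=\mathrm{id}$ and $\widehat{\rho}_n\circ\widehat{\iota}_n=\mathrm{id}$, and therefore $\widehat{\iota}_n$ is an isomorphism, as claimed.
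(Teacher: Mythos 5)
Your argument is correct and follows essentially the same route as the paper: both rest on the distributed-diagram presentation of Proposition \ref{proppresentation} together with the observation that a distributed $k_\tinf$--legs diagram involves at most $\lfloor k/2\rfloor\leq\min(\ell_1,\ell_2)$ copies of $\BlMod$, so that $\Aut_\xi$ lets one push the support into the first $\ell_1$ copies. The paper states this in two lines and identifies the two presentations directly, whereas you package the same idea as an explicit two-sided inverse; the extra verifications you flag (compatibility with the relations) are exactly the ones the paper leaves implicit.
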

\begin{proof}
 A distributed $k_\tinf$--legs diagram involves at most $2k$ copies of $\Al$; up to $\Aut_\xi$, we can assume
 that these are copies whithin the first $\ell_1$ ones. Conclude with Proposition \ref{proppresentation}.
\end{proof}
  
The next lemma will be useful in particular to restrict the study of the map $\iota_2$ to suitable quotients. 
\begin{corollary} \label{coridealpres}
 Let $n$, $N$, $k$ and $\ell$ be non negative integers such that $N\geq\frac{3n}{2}$ and $\frac k2\leq \ell\leq N$. 
 If $\A_n^{(k)}\left(\BlModN\right)\cong\widehat{\A}_n^{(k)}\left(\BlModN\right)$,
 then the map $\A_n^{(k)}\big(\BlModL\big)\to\A_n^{(k)}\left(\BlModN\right)$ induced by $\iota_n$ is an isomorphism. 
\end{corollary}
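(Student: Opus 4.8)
The plan is to prove Corollary \ref{coridealpres} by exploiting the commutativity of natural inclusion maps between the various presentations together with the hypothesis, which allows us to replace the ``hatted'' spaces $\widehat{\A}$ by the ``unhatted'' spaces $\A$ at the relevant spot. First I would set up the square of natural maps linking the four spaces in play: the hatted and unhatted versions of $\A_n^{(k)}$ for the two module sums $\BlModL$ and $\BlModN$. Concretely, we have the surjections $\widehat{\A}_n^{(k)}\big(\BlModL\big)\twoheadrightarrow\A_n^{(k)}\big(\BlModL\big)$ and $\widehat{\A}_n^{(k)}\big(\BlModN\big)\twoheadrightarrow\A_n^{(k)}\big(\BlModN\big)$, together with the horizontal maps induced by $\iota_n$ on both rows. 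These maps fit into a commutative square because $\iota_n$ and $\widehat{\iota}_n$ are both defined in the same way (by reinterpreting leg labels as lying in the first $\ell$ copies), so interpreting labels and then quotienting is the same as quotienting and then interpreting labels.

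The key step is to invoke Corollary \ref{corpresentation}, which under the standing inequalities $\ell,N\geq\frac k2$ asserts that the top horizontal map $\widehat{\iota}_n:\widehat{\A}_n^{(k)}\big(\BlModL\big)\to\widehat{\A}_n^{(k)}\big(\BlModN\big)$ is an \emph{isomorphism}. Then I would use the hypothesis $\A_n^{(k)}\big(\BlModN\big)\cong\widehat{\A}_n^{(k)}\big(\BlModN\big)$: since the canonical surjection from the hatted to the unhatted space is an isomorphism on the right-hand column, the right-hand vertical map is an isomorphism. Chasing the commutative square then shows that the bottom map $\A_n^{(k)}\big(\BlModL\big)\to\A_n^{(k)}\big(\BlModN\big)$ is surjective (it factors through the composite of an isomorphism with a surjection) and, more delicately, injective.

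The main obstacle, and the heart of the argument, is the injectivity of the bottom map. Surjectivity is immediate from the diagram; but to get injectivity I need to track how the left vertical surjection $\widehat{\A}_n^{(k)}\big(\BlModL\big)\twoheadrightarrow\A_n^{(k)}\big(\BlModL\big)$ interacts with the isomorphisms on the other three sides. The clean way is a diagram chase: given $x\in\A_n^{(k)}\big(\BlModL\big)$ mapping to $0$ on the right, lift $x$ to $\widehat{x}$ upstairs; push $\widehat{x}$ across the top isomorphism and down the right, where it vanishes by hypothesis and commutativity. Since the right vertical map is an isomorphism (not merely surjective), the image of $\widehat{x}$ under the top map is itself zero in the hatted space on the right, whence $\widehat{x}=0$ upstairs on the left because $\widehat{\iota}_n$ is injective, and therefore $x=0$. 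This pins down the one genuinely non-formal input, namely that the hypothesis upgrades the right-hand surjection to an isomorphism, which is exactly what lets the injectivity of $\widehat{\iota}_n$ descend to the unhatted level.

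\begin{proof}
By Proposition \ref{proppresentation}, each of the spaces $\widehat{\A}_n^{(k)}\big(\BlModL\big)$ and $\widehat{\A}_n^{(k)}\big(\BlModN\big)$ is presented by distributed diagrams, and the natural maps $\widehat{\A}_n^{(k)}\big(\BlMod^{\oplus m}\big)\twoheadrightarrow\A_n^{(k)}\big(\BlMod^{\oplus m}\big)$ together with $\iota_n$ and $\widehat{\iota}_n$ fit into the commutative square
\[
\vcenter{\hbox{\begin{tikzpicture} [xscale=3,yscale=1.2]
 \draw (0,1) node {$\widehat{\A}_n^{(k)}\big(\BlModL\big)$};
 \draw (2,1) node {$\widehat{\A}_n^{(k)}\left(\BlModN\right)$};
 \draw (0,-1) node {$\A_n^{(k)}\big(\BlModL\big)$};
 \draw (2,-1) node {$\A_n^{(k)}\left(\BlModN\right)$};
 \draw[->] (0.7,1) -- (1.3,1); \draw (1,1) node[above] {$\widehat{\iota}_n$};
 \draw[->] (0.7,-1) -- (1.3,-1); \draw (1,-1) node[below] {$\iota_n$};
 \draw[->>] (0,0.7) -- (0,-0.7);
 \draw[->>] (2,0.7) -- (2,-0.7);
\end{tikzpicture}}}.
\]
By Corollary \ref{corpresentation}, the top map $\widehat{\iota}_n$ is an isomorphism, since $\ell,N\geq\frac k2$. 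By hypothesis, the right vertical surjection $\widehat{\A}_n^{(k)}\big(\BlModN\big)\twoheadrightarrow\A_n^{(k)}\big(\BlModN\big)$ is an isomorphism.

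Surjectivity of the bottom map follows immediately: it equals the composite of the left surjection with the bottom map, which by commutativity equals the composite of the top isomorphism with the right isomorphism, hence is surjective.

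For injectivity, let $x\in\A_n^{(k)}\big(\BlModL\big)$ satisfy $\iota_n(x)=0$. Choose a preimage $\widehat{x}\in\widehat{\A}_n^{(k)}\big(\BlModL\big)$ of $x$ under the left surjection. By commutativity, the image of $\widehat{x}$ under the right surjection composed with $\widehat{\iota}_n$ equals $\iota_n(x)=0$. Since the right vertical map is an isomorphism, this forces $\widehat{\iota}_n(\widehat{x})=0$ in $\widehat{\A}_n^{(k)}\big(\BlModN\big)$. As $\widehat{\iota}_n$ is an isomorphism, $\widehat{x}=0$, and therefore $x=0$. Hence the bottom map is injective, and being also surjective it is an isomorphism.
\end{proof}
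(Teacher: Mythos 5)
Your proof is correct and follows essentially the same route as the paper: both invoke Corollary \ref{corpresentation} to get that $\widehat{\iota}_n$ is an isomorphism at the hatted level, use the hypothesis to identify $\widehat{\A}_n^{(k)}\left(\BlModN\right)$ with $\A_n^{(k)}\left(\BlModN\right)$, and then conclude by a diagram chase through the canonical surjection $\widehat{\A}_n^{(k)}\big(\BlModL\big)\twoheadrightarrow\A_n^{(k)}\big(\BlModL\big)$. The paper merely draws the resulting data as a commutative triangle (composing your top and right isomorphisms into a single diagonal isomorphism) rather than a square, so the two arguments are the same in substance.
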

\begin{proof}
By Corollary \ref{corpresentation}, the map $\widehat{\iota}_n:\widehat{\A}_n^{(k)}\big(\BlModL\big)\to\widehat{\A}_n^{(k)}\left(\BlModN\right)$ 
is an isomorphism. Hence we have the following commutative diagram:
\[
 \vcenter{\hbox{\begin{tikzpicture}
  \draw (0,2) node {$\widehat{\A}_n^{(k)}\big(\BlModl\big)$} (0,0) node {$\A_n^{(k)}\big(\BlModl\big)$}
        (7,1) node {$\A_n^{(k)}\big(\BlModN\big)$};
  \draw[->] (2,1.8) -- (5,1.2); \draw (3.5,1.5) node[above] {$\cong$};
  \draw[->] (2,0.2) -- (5,0.8); \draw[->>] (0,1.5) -- (0,0.5);
 \end{tikzpicture}}}.
\]
The statement follows.
\end{proof}

\begin{lemma} \label{lemmaiotaquotients}
 Let $n$, $k$, $\ell_1$ and $\ell_2$ be non negative integers such
 that $\ell_1\leq \ell_2$ and $\frac k2\leq \ell_2$. 
 Let $\widetilde{\A}_n^{(k)}$ denote the image of $\widehat{\A}_n^{(k)}$ in $\widehat{\A}_n^{(k+2)}$. 
 Then the map $\fract{\widehat{\A}_n^{(k+2)}(\BlMod^{\oplus \ell_1})}{\widetilde{\A}_n^{(k)}(\BlMod^{\oplus \ell_1})}\to
 \fract{\widehat{\A}_n^{(k+2)}(\BlMod^{\oplus\ell_2})}{\widetilde{\A}_n^{(k)}(\BlMod^{\oplus \ell_2})}$ induced by $\widehat{\iota}_n$ is injective.
\end{lemma}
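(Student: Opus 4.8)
The plan is to leverage Corollary \ref{corpresentation}, which already gives that $\widehat{\iota}_n$ is an isomorphism on each individual space $\widehat{\A}_n^{(k)}$ whenever both indices exceed $\frac k2$. The subtlety here is that we are comparing \emph{quotients} $\widehat{\A}_n^{(k+2)}/\widetilde{\A}_n^{(k)}$, and injectivity of the induced map on a quotient does not follow formally from isomorphism on the total spaces unless the subspaces being quotiented match up correctly under the map. So first I would write down the commutative square relating the two copy-levels: the vertical maps are the inclusions-induced-quotients and the two horizontal maps are $\widehat{\iota}_n$ in degrees $k$ and $k+2$. Both horizontal maps are isomorphisms by Corollary \ref{corpresentation} (using $\ell_1,\ell_2\geq\frac k2$ and $\ell_1,\ell_2\geq\frac{k+2}2$, where the latter needs checking).

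The key point to verify is what $\widehat{\iota}_n$ does to the image subspaces $\widetilde{\A}_n^{(k)}$. By definition $\widetilde{\A}_n^{(k)}(\BlMod^{\oplus \ell})$ is the image of $\widehat{\A}_n^{(k)}(\BlMod^{\oplus \ell})$ inside $\widehat{\A}_n^{(k+2)}(\BlMod^{\oplus \ell})$. Since $\widehat{\iota}_n$ commutes with the natural maps $\widehat{\A}_n^{(k)}\to\widehat{\A}_n^{(k+2)}$ (both being induced by interpreting labels in the first $\ell_1$ copies), I would argue that $\widehat{\iota}_n$ carries $\widetilde{\A}_n^{(k)}(\BlMod^{\oplus \ell_1})$ into $\widetilde{\A}_n^{(k)}(\BlMod^{\oplus \ell_2})$, so the induced map on quotients is well-defined. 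The crucial claim is then the reverse containment at the level needed for injectivity: if a class in $\widehat{\A}_n^{(k+2)}(\BlMod^{\oplus \ell_1})$ maps into $\widetilde{\A}_n^{(k)}(\BlMod^{\oplus \ell_2})$ under the isomorphism $\widehat{\iota}_n$, then it already lies in $\widetilde{\A}_n^{(k)}(\BlMod^{\oplus \ell_1})$.

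The cleanest way to secure this is a retraction argument. I would use $\Aut_\xi$ together with the distributed-diagram presentation of Proposition \ref{proppresentation}: a distributed $(k+2)_\tinf$--legs diagram in $\BlMod^{\oplus \ell_2}$ involves at most $2(k+2)$ copies of $\Al$, and up to permutation automorphisms we may assume these sit among the first $\ell_1$ copies (provided $\ell_1$ is large enough, which the hypothesis $\frac k2\leq \ell_2$ should be refined to guarantee, or else handled by the fact that the image only ever uses the first $\ell_1$ copies). This produces a splitting of $\widehat{\iota}_n$ compatible with the subspaces $\widetilde{\A}_n^{(k)}$, from which the induced map on quotients inherits injectivity. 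Concretely, once I know $\widehat{\iota}_n$ is an isomorphism in both degrees $k$ and $k+2$ and that it matches the subspaces $\widetilde{\A}_n^{(k)}$ on both sides \emph{exactly} (not merely up to inclusion), the snake lemma or a direct diagram chase on
\[
\vcenter{\hbox{\begin{tikzpicture}[xscale=2.2,yscale=1.1]
 \draw (0,1) node {$\widetilde{\A}_n^{(k)}(\BlMod^{\oplus \ell_1})$};
 \draw (2,1) node {$\widehat{\A}_n^{(k+2)}(\BlMod^{\oplus \ell_1})$};
 \draw (4,1) node {$\fract{\widehat{\A}_n^{(k+2)}(\BlMod^{\oplus \ell_1})}{\widetilde{\A}_n^{(k)}(\BlMod^{\oplus \ell_1})}$};
 \draw (0,-1) node {$\widetilde{\A}_n^{(k)}(\BlMod^{\oplus \ell_2})$};
 \draw (2,-1) node {$\widehat{\A}_n^{(k+2)}(\BlMod^{\oplus \ell_2})$};
 \draw (4,-1) node {$\fract{\widehat{\A}_n^{(k+2)}(\BlMod^{\oplus \ell_2})}{\widetilde{\A}_n^{(k)}(\BlMod^{\oplus \ell_2})}$};
 \draw[->] (0.6,1) -- (1.3,1); \draw[->] (2.7,1) -- (3.3,1);
 \draw[->] (0.6,-1) -- (1.3,-1); \draw[->] (2.7,-1) -- (3.3,-1);
 \draw[->] (0,0.7) -- (0,-0.7); \draw[->] (2,0.7) -- (2,-0.7); \draw[->] (4,0.7) -- (4,-0.7);
\end{tikzpicture}}}
\]
gives the result. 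I expect the main obstacle to be precisely this matching of subspaces: showing that the image $\widetilde{\A}_n^{(k)}$ is respected \emph{both ways} by $\widehat{\iota}_n$, since a class supported on higher copies could a priori be equivalent, via the relations in $\BlMod^{\oplus \ell_2}$ but not in $\BlMod^{\oplus \ell_1}$, to something in $\widetilde{\A}_n^{(k)}$; the distributed presentation and $\Aut_\xi$ are exactly the tools that rule this out.
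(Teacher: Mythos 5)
There is a genuine gap. Your primary route rests on the claim that both horizontal maps $\widehat{\iota}_n:\widehat{\A}_n^{(k+2)}(\BlMod^{\oplus \ell_1})\to\widehat{\A}_n^{(k+2)}(\BlMod^{\oplus \ell_2})$ and its degree-$k$ analogue are isomorphisms via Corollary \ref{corpresentation}; but that corollary requires $\ell_1\geq\frac{k+2}2$ (resp.\ $\ell_1\geq\frac k2$), and the lemma imposes \emph{no} lower bound on $\ell_1$ whatsoever---only $\ell_1\leq\ell_2$ and $\frac k2\leq\ell_2$. The cases where the lemma is actually needed (see the induction in Corollary \ref{corinjectivity}, where it is invoked precisely when $k>\ell$) are exactly those where $\ell_1<\frac{k+2}2$, so your hypothesis "which needs checking" fails, and assuming injectivity of $\widehat{\iota}_n$ on the total space at level $k+2$ would be circular in the intended application. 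Your fallback retraction via $\Aut_\xi$ has the same defect: a distributed $(k+2)_\tinf$--legs diagram can involve up to $\frac{k+2}2$ copies of $\Al$, which may exceed $\ell_1$, so permutation automorphisms cannot move all its labels into the first $\ell_1$ copies.

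The paper's proof is a retraction argument, but of a different and cruder kind: it defines $\sigma_n$ on a distributed $(k+2)_\tinf$--legs generator by forcibly relabelling every leg coloured in $\Al_i$ with $\ell_1<i\leq\ell_2$ by its image under $\xi_1\circ\xi_i^{-1}$ in $\Al_1$ (a many-to-one collapse, not an automorphism), and then choosing \emph{any} linkings coherent with the new labels. The point you are missing is why this is well defined only on the quotient: two admissible choices of linkings differ by elements of $\Qt$, and by the $\LD$ relation such a difference is a diagram with two fewer legs, i.e.\ lies in $\widetilde{\A}_n^{(k)}(\BlMod^{\oplus\ell_1})$. Hence $\sigma_n$ is well defined modulo $\widetilde{\A}_n^{(k)}$ and satisfies $\sigma_n\circ\widehat{\iota}_n=\mathrm{Id}$ on the quotient, which gives injectivity directly---no diagram chase, no matching of subspaces, and no isomorphism on the total spaces (which is in general false at level $\ell_1$). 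You correctly sensed that a retraction compatible with the filtration is the right tool, but you did not identify the collapse-to-$\Al_1$ construction nor the role of $\LD$ in absorbing the linking ambiguity into $\widetilde{\A}_n^{(k)}$, and these are the actual content of the proof.
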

\begin{proof}
 Let us define a left inverse of $\widehat{\iota}_n$. Let $D$ be a
 distributed $(k+2)_\tinf$--legs diagram. For each leg colored by $\eta\in\Al_i$ 
 with $\ell_1<i\leq\ell_2$, replace the label by $\xi_1\circ\xi_i^{-1}(\eta)$. Choose any linkings coherent with these new labels. Thanks to the relation $\LD$, 
 any such choice defines the same class $\sigma_n(D)$ in the quotient $\fract{\widehat{\A}_n^{(k+2)}(\BlMod^{\oplus \ell_1})}{\widetilde{\A}_n^{(k)}(\BlMod^{\oplus \ell_1})}$. 
 This provides a well-defined map $\sigma_n:\fract{\widehat{\A}_n^{(k+2)}(\BlMod^{\oplus \ell_2})}{\widetilde{\A}_n^{(k)}(\BlMod^{\oplus \ell_2})}\to
 \fract{\widehat{\A}_n^{(k+2)}(\BlMod^{\oplus\ell_1})}{\widetilde{\A}_n^{(k)}(\BlMod^{\oplus \ell_1})}$ such that $\sigma_n\circ\widehat{\iota}_n=Id$. 
\end{proof}

\begin{corollary} \label{corinjectivity}
 Let $n$, $\ell$ and $N$ be non negative integers such that $n$ is even, $\ell\leq N$ and $N\geq\frac{3n}{2}$. 
 If
 $\widehat{\A}_n^{(2k)}\left(\BlModN\right)\cong\A_n^{(2k)}\left(\BlModN\right)$
 for all integers $k$ such that $\ell\leq k\leq \frac{3n}2$, 
 then the map $\iota_n:\A_n\big(\BlMod^{\oplus \ell}\big)\to\A_n\big(\BlModN\big)$ is injective. 
 Moreover, it implies that $\widehat{\A}_n^{(2k)}\big(\BlModl\big)\cong\A_n^{(2k)}\big(\BlModl\big)$ for all $k\geq0$. 
\end{corollary}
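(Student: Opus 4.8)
The plan is to establish both assertions simultaneously by an induction on the number of legs, feeding in the hypothesis one filtration level at a time. Since $\A_n\BlModl=\A_n^{(3n)}\BlModl$ and likewise for $\BlModN$, the injectivity of $\iota_n$ is exactly the top case ($2j=3n$) of the claim that $\iota_n\colon\A_n^{(2j)}\BlModl\to\A_n^{(2j)}\BlModN$ is injective. I will therefore prove, for $j$ running from $\ell$ to $\frac{3n}{2}$, the two statements (A$_j$): the natural surjection $q_\ell\colon\widehat{\A}_n^{(2j)}\BlModl\twoheadrightarrow\A_n^{(2j)}\BlModl$ is an isomorphism, and (B$_j$): $\iota_n\colon\A_n^{(2j)}\BlModl\to\A_n^{(2j)}\BlModN$ is injective. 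Throughout, Corollary~\ref{corpresentation} provides that $\widehat{\iota}_n$ is an isomorphism between the $\widehat{\A}$-spaces whenever both numbers of copies are at least half the number of legs.

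For the base case $j=\ell$, the condition $\frac{2\ell}{2}\le\ell\le N$ of Corollary~\ref{coridealpres} holds, and the hypothesis at $k=\ell$ gives that $\iota_n\colon\A_n^{(2\ell)}\BlModl\to\A_n^{(2\ell)}\BlModN$ is an isomorphism; this is (B$_\ell$). To get (A$_\ell$), I would fit $q_\ell$ and $q_N$ into the commutative square whose top edge is $\widehat{\iota}_n$ (an isomorphism by Corollary~\ref{corpresentation}, as $\ell,N\ge\ell$), whose bottom edge is the isomorphism $\iota_n$ just found, and whose right edge $q_N$ is an isomorphism by hypothesis; since $\iota_n\circ q_\ell=q_N\circ\widehat{\iota}_n$, the square forces $q_\ell$ to be an isomorphism.

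For the inductive step, suppose (A$_j$) and (B$_j$) with $\ell\le j<\frac{3n}{2}$, and pass to the associated graded of the leg filtration, writing $\bar{\A}^{(2j+2)}=\A_n^{(2j+2)}/\A_n^{(2j)}$ and $\bar{\widehat{\A}}^{(2j+2)}=\widehat{\A}_n^{(2j+2)}/\widetilde{\A}_n^{(2j)}$ for each module. Over $\BlModN$, the hypothesis at $k=j$ and $k=j+1$ gives $\widetilde{\A}_n^{(2j)}\BlModN\cong\A_n^{(2j)}\BlModN$ (a quotient of $\widehat{\A}_n^{(2j)}\BlModN$ surjecting isomorphically onto $\A_n^{(2j)}\BlModN$) and $\widehat{\A}_n^{(2j+2)}\BlModN\cong\A_n^{(2j+2)}\BlModN$; the short five lemma applied to the two defining sequences then shows the induced surjection $\bar q_N\colon\bar{\widehat{\A}}^{(2j+2)}\BlModN\to\bar{\A}^{(2j+2)}\BlModN$ is an isomorphism. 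On the $\BlModl$ side, Lemma~\ref{lemmaiotaquotients} (with $k=2j$, legitimate since $j\le N$) gives that $\widehat{\iota}_n$ is injective on $\bar{\widehat{\A}}^{(2j+2)}\BlModl$. In the commutative square relating these graded pieces, the composite $\bar q_N\circ\widehat{\iota}_n$ is injective, and since it equals $\iota_n\circ\bar q_\ell$ the surjection $\bar q_\ell$ must be injective, hence an isomorphism, whence $\iota_n$ is injective on $\bar{\A}^{(2j+2)}\BlModl$. Feeding these two facts into the short five lemma for the short exact sequences $0\to\A_n^{(2j)}\to\A_n^{(2j+2)}\to\bar{\A}^{(2j+2)}\to0$ and their $\widehat{\A}$-analogues yields (B$_{j+1}$) and (A$_{j+1}$) respectively.

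Taking $2j=3n$ gives the injectivity of $\iota_n$, the first assertion, and the induction has delivered $\widehat{\A}_n^{(2k)}\BlModl\cong\A_n^{(2k)}\BlModl$ for all $\ell\le k\le\frac{3n}{2}$; the range $k>\frac{3n}{2}$ is trivial since both spaces are all of $\A_n\BlModl$. For $k<\ell$ I would use that $\widehat{\iota}_n$ is still an isomorphism (Corollary~\ref{corpresentation}, as $\ell,N\ge k$) and that $\iota_n$ is now known injective: the compatible surjections $q_\ell,q_N$ then match kernels and reduce $\widehat{\A}_n^{(2k)}\BlModl\cong\A_n^{(2k)}\BlModl$ to the same statement over $\BlModN$. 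I expect this bottom of the filtration to be the main obstacle, since the hypothesis gives no control below $2\ell$ legs: there the non-strictness of the leg filtration---precisely the failure of $\widehat{\A}_n^{(k)}\to\A_n^{(k)}$ to be injective in general, caused by the relation $\LD$ lowering the number of legs---has to be handled directly in the low degrees at hand, for instance by verifying that the natural maps $\widehat{\A}_n^{(2k)}\to\widehat{\A}_n^{(2k+2)}$ are injective there.
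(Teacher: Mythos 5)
Your proof is correct and follows essentially the same route as the paper's: an induction up the leg filtration whose base case rests on Corollaries \ref{corpresentation} and \ref{coridealpres}, whose inductive step combines Lemma \ref{lemmaiotaquotients} on the quotients $\widehat{\A}_n^{(2j+2)}/\widetilde{\A}_n^{(2j)}$ with the hypothesis over $\BlModN$, and which concludes via the short five lemma (the paper's unnamed ``Fact''). The residual worry you raise about $k<\ell$ in the final clause is shared by the paper's own, terser, proof, which for $k\le\ell$ likewise needs the surjection $\widehat{\A}_n^{(2k)}\big(\BlModN\big)\to\A_n^{(2k)}\big(\BlModN\big)$ to be injective; in every application the hypothesis is in fact verified for all $k\le\frac{3n}{2}$ and the ``Moreover'' clause is only invoked for $k\ge\ell$, so nothing is lost.
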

\begin{proof}
 We prove by induction on $k$ that $\widehat{\A}_n^{(2k)}\big(\BlModl\big)\cong\widetilde{\A}_n^{(2k)}\big(\BlModl\big)\cong\A_n^{(2k)}\big(\BlModl\big)$ 
 and that the map $\A_n^{(2k)}\big(\BlModl\big)\to\A_n^{(2k)}\big(\BlModN\big)$ induced by $\iota_n$ is injective. 
 For $k\leq\ell$, Corollary~\ref{corpresentation} says that $\widehat{\iota}_n:\widehat{\A}_n^{(2k)}\big(\BlModl\big)\to\widehat{\A}_n^{(2k)}\big(\BlModN\big)$ 
 is an isomorphism. For $k>\ell$, we use the following observation.
 \begin{fact*}
  Let $f:E_1\to E_2$ be a morphism between two vector spaces. Let $F_1\subset E_1$ and $F_2\subset E_2$ be linear subspaces such that $f(F_1)\subset F_2$ and let $\bar{f}:\fract{E_1}{F_1}\to \fract{E_2}{F_2}$ be the map induced by $f$. If $\bar{f}$ and $f_{|F_1}$ are injective, then $f$ is injective.
 \end{fact*}
 Together with Lemma~\ref{lemmaiotaquotients} and the induction hypothesis, this implies that the map $\widehat{\iota}_n:\widehat{\A}_n^{(2k)}\big(\BlModl\big)\to\widehat{\A}_n^{(2k)}\big(\BlModN\big)$ is injective. 
 In both cases, we get the following commutative diagram:
 \[
 \vcenter{\hbox{\begin{tikzpicture}
  \draw (0,2) node {$\widehat{\A}_n^{(2k)}\big(\BlModl\big)$} (0,0) node {$\widetilde{\A}_n^{(2k)}\big(\BlModl\big)$}
        (0,-2) node {$\A_n^{(2k)}\big(\BlModl\big)$} (7,0) node {$\A_n^{(2k)}\big(\BlModN\big)$,};
  \draw[->] (2,1.6) -- (5,0.3); \draw (2,1.6) arc (240:60:0.1);
  \draw[->] (2,0) -- (5,0); \draw[->] (2,-1.6) -- (5,-0.3);
  \draw[->>] (0,1.5) -- (0,0.5); \draw[->>] (0,-0.5) -- (0,-1.5);
 \end{tikzpicture}}}
 \]
which concludes the proof.
\end{proof}

\subsection{First reduction of the presentations} \label{secreduction}

\paragraph{Getting rid of lollipops.}

We start with a lemma on 0--labelled vertices. 
\begin{lemma} \label{lem:0LabelledVertex}
 If $D$ is an $\BlMod$--colored diagram with a 0--labelled vertex $v$, then
 \[
 D=\sum_{\substack{v'\textnormal{ vertex of }D\\v'\neq v}}D_{vv'},
 \]
 where $D_{vv'}$ is obtained from $D$ by pairing $v$ and $v'$ as in Figure \ref{figgroup}.
\end{lemma}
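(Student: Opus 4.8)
The plan is to resolve the $0$--label using the linearity relation $\LV$ and then to trade the resulting change of linkings for pairings via repeated use of the linking-difference relation $\LD$. First I would record a preliminary normalization: since $v$ is labelled by $0\in\Al$, the relation $\EV$ lets me absorb the polynomial decorating the edge at $v$ into its label without changing it (because $P\cdot 0=0$), at the cost of multiplying every linking $f_{vv'}$ by that polynomial; a direct inspection of the pairing formula in Figure~\ref{figgroup} shows that this operation leaves each diagram $D_{vv'}$ unchanged, so I may assume the edge at $v$ is labelled $1$. Note also that, because $v$ is labelled $0$, each linking $f_{vv'}^{D}$ reduces to $\bl(0,\gamma_{v'})=0$ modulo $\Qt$ and hence lies in $\Qt$.

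Next I would apply $\LV$ to the decomposition $0=\gamma-\gamma$ for an arbitrary $\gamma\in\Al$. This writes $D=D_1-D_2$, where $D_1$ and $D_2$ are the two diagrams obtained from $D$ by relabelling $v$ with $\gamma$, identical in every respect except for the linkings at $v$, which I am free to choose subject only to $f_{vv'}^{D_1}-f_{vv'}^{D_2}=f_{vv'}^{D}$ for every leg $v'\neq v$; each side is a legitimate lift of $\bl(\gamma,\gamma_{v'})$ precisely because the prescribed difference $f_{vv'}^{D}$ lies in $\Qt$.

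The core step is then to transform $D_2$ into $D_1$ by correcting the linkings one leg at a time. Enumerating the legs $v'\neq v$ as $v_1,\dots,v_m$, I increase $f_{vv_i}$ from its $D_2$--value to its $D_1$--value at the $i$--th step; by $\LD$ (in the form where the edges at the two paired legs are labelled $1$, which I arrange locally by $\EV$ exactly as in the normalization above), each such step differs from the previous diagram precisely by the pairing of $v$ and $v_i$, the new edge carrying the label dictated by the linking increment $f_{vv_i}^{D}$. Telescoping these $m$ steps yields $D_1-D_2=\sum_{i=1}^m D_{vv_i}$, since the paired term produced at the $i$--th step agrees with $D_{vv_i}$: pairing deletes $v$ together with all the already-modified linkings $f_{vv_j}$, so the surviving part of the diagram is untouched, and the edge of the pairing receives exactly the label $P(t)Q(t^{-1})f_{vv_i}^{D}(t)$ prescribed in Figure~\ref{figgroup}. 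Combined with $D=D_1-D_2$, this gives the claim; the degenerate case in which $v$ is the only leg is covered as well, the empty sum matching $D_1=D_2$, hence $D=0$.

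The step I expect to require the most care is the last one: matching, with the correct orientations and the interchange $t\leftrightarrow t^{-1}$, the edge label produced by the $\LD$/$\EV$ bookkeeping against the pairing label $P(t)Q(t^{-1})f_{vv'}(t)$ of Figure~\ref{figgroup}, and checking that the successive local $\EV$--normalizations at the various $v_i$ do not interfere (they do not, since once $v_i$ is paired off its rescaled linkings disappear). Everything else is a direct application of the listed relations.
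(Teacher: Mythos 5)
Your proof is correct and uses exactly the two relations ($\LV$ and $\LD$) that the paper's one-line proof invokes, after the same initial observation that the linkings $f_{vv'}$ are polynomials because $v$ is $0$--labelled. The only (immaterial) difference is the order of operations: the paper's intended route is to strip the linkings at $v$ by $\LD$ first and then kill the residual diagram via $\LV$ applied to $0=0\cdot\gamma$, whereas you split $0=\gamma-\gamma$ by $\LV$ first and then telescope with $\LD$; both yield the same identity.
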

\begin{proof}
 Since the vertex $v$ is labelled by $0$, the linking $f_{vv'}$ is a polynomial for any vertex $v'\neq v$. The conclusion follows using the relations $\LD$ and $\LV$.
\end{proof}

Now, the following lemma reduces the set of generators.
\begin{lemma} \label{lemmaloop}
 The general presentation of $\A_n\BlMod$ and the presentations of $\widehat{\A}_n^{(k)}\left(\BlModl\right)$ and $\A_n\left(\BlModN\right)$ 
 given in Proposition \ref{proppresentation} are still valid when removing from the generators the diagrams whose underlying graph contains 
 a connected component $\GenLoop$.
\end{lemma}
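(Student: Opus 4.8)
The plan is to show that every generator whose underlying graph has a connected component equal to $\GenLoop$ is already zero in the relevant quotient; such diagrams are then redundant generators and may be deleted without affecting the presented space. Since all three presentations in question contain the relations $\Hol$, $\EV$ and $\LV$ (together with $\LD$), it suffices to establish this vanishing once, working locally at the lollipop component.

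Fix such a diagram $D$ and let $w$ be the trivalent vertex of its lollipop component, carrying a loop $e$ and a stem edge joining $w$ to the univalent vertex $v$; write $\gamma\in\Al$ for the label of $v$ and $R\in\Qt$ for the label of the stem. First I would analyse the effect of $\Hol$ at $w$. After orienting the stem outward and presenting the loop in outgoing form, the loop is outgoing along one of its half-edges and incoming along the other, so the holonomy factors contributed to its single label cancel ($t\cdot t^{-1}=1$), whereas the stem label is multiplied by $t$. Thus $\Hol$ at $w$ identifies $D$ with the same diagram in which the stem carries $tR$ instead of $R$, the loop and all leg labels being unchanged. Applying $\EV$ along the stem then pulls the unit factor $t$ off the stem and onto the label of $v$, so this diagram equals the one with stem label $R$ and leg label $t\gamma$. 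Composing the two steps yields the key identity: the lollipop with leg label $\gamma$ equals the lollipop with leg label $t\gamma$, everything outside the component being fixed.

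With this identity in hand, I would invoke $\LV$ with coefficients $1$ and $-1$ to rewrite the difference as a single lollipop with leg label $(1-t)\gamma$, whence that diagram vanishes. Condition \ref{item:1-t} in the definition of a Blanchfield module makes multiplication by $(1-t)$ an isomorphism of $\Al$, hence surjective, so every element of $\Al$ is of the form $(1-t)\gamma$; therefore the lollipop carries an arbitrary leg label to zero and $D=0$ in the quotient. The very same relations appear in the distributed presentations of Proposition \ref{proppresentation}, so the argument applies there verbatim, and in each case the lollipop generators may be removed.

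The main obstacle I anticipate is bookkeeping the rational linkings $f_{vv'}$ attached to $v$ throughout this chain: $\EV$ multiplies each $f_{vv'}$ by $t$, and matching the linkings of the $\gamma$- and $t\gamma$-diagrams so that $\LV$ applies requires choosing them coherently, which is precisely what the relation $\LD$ permits (as in the recovery of $\Hol'$ in Figure \ref{figrecovHolp}). A secondary point to make rigorous is the reading of $\Hol$ at a vertex bearing a loop, since the relation is stated for three distinct outgoing edges; I would justify the $t\cdot t^{-1}$ cancellation by first using $\OR$ to present each half-edge in outgoing form. Once these two points are settled, the reduction to deleting lollipop generators is immediate.
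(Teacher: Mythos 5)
Your key identity---that the lollipop with leg label $\gamma$, stem label $R$ and linkings $f_{vv'}$ equals the one with leg label $t\gamma$, stem label $R$ and linkings $tf_{vv'}$, obtained from $\Hol'$ followed by $\EV$ along the stem---is correct, and it does show that the lollipop diagram with leg label $(1-t)\gamma$ and linkings of the special form $(1-t)f_{vv'}$ vanishes. But the conclusion you draw, namely that \emph{every} diagram with a $\GenLoop$ component is zero in $\A_n\BlMod$, is false, and the gap is precisely the point you dismiss as ``bookkeeping''. The generators of the presentation carry \emph{arbitrary} rational fractions $g_{vv'}\in\frac1\delta\Qt$ with the prescribed residue $\bl(\eta,\gamma_{v'})$, and such a $g_{vv'}$ need not equal $(1-t)f_{vv'}$ for a legitimate choice of $f_{vv'}$: it differs from one by some $P\in\Qt$, and $\LD$ does not let you change a linking by $P$ for free---it produces the correction term in which $v$ is paired with $v'$, a genuine diagram with two fewer legs (pairing the legs of two lollipop components, for instance, yields a closed graph which nothing in your argument shows to be zero). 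So what your computation actually proves is that every lollipop diagram equals a $\Q$--linear combination of diagrams with fewer legs. That is exactly the paper's conclusion, reached there by a parallel route: $\Hol'$ is used to average over the coefficients of $\delta$, turning the leg label into $\delta(t)\eta=0$, and Lemma \ref{lem:0LabelledVertex} then rewrites the diagram as a sum of paired diagrams. If lollipop diagrams were honestly zero, later reductions would read differently---e.g.\ in Lemma \ref{lemma2step4legs} the $4$--legs lollipop generators are rewritten as $2_\tinf$--legs generators, not discarded as trivial.

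Even once the vanishing claim is corrected to ``equals a sum of diagrams with fewer legs'', the lemma is not yet proved: removing generators from a presentation also requires checking that every defining relation involving a removed generator becomes, after substitution, a consequence of the relations among the remaining generators, and since the correction terms may themselves contain lollipop components, one needs the induction on the number of legs with which the paper closes its proof. Your proposal addresses neither of these points.
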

\begin{proof}
 Thanks to the $\OR$ relation, such a diagram can be written
 \[
D=\GLoop{\eta}{Q(t)}{P(t)}\sqcup D'.
\]
 Writing $\delta=\sum_{k=p}^q a_k t^k$, we have:
 \[
D=\frac1{\delta(1)}\sum_{k=p}^qa_k\left(\GLoop{\eta}{t^kQ(t)}{P(t)}\sqcup D'\right)
 =\frac1{\delta(1)}\left(\GLoop{\delta(t)\eta}{Q(t)}{P(t)}\sqcup D'\right)
 =\frac1{\delta(1)}\left(\GLoop{0}{Q(t)}{P(t)}\sqcup D'\right),
\]
 where the first equality holds since each diagram in the sum is equal to $D$ by $\Hol'$ and the second equality follows from $\EV$ and $\LV$. 
 Then, using Lemma \ref{lem:0LabelledVertex}, $D$ can be written as a sum of diagrams with less legs. 
 Check that all the relations involving $D$ can be recovered from relations on diagrams with less legs. Conclude by decreasing induction 
 on the number of legs.
\end{proof}

Finally, we state a corollary of Lemma \ref{lem:0LabelledVertex} which will be useful later.
\begin{corollary} \label{cor:DV}
 Let $D$ be an $\BlMod$--colored diagram and let $v$ be a univalent vertex of $D$. If the annihilator of $\Al$ is $\delta=t+a+t^{-1}$, then
 \[
 D_+=-aD-D_-+\sum_{\substack{v'\textnormal{ vertex of }D\\v'\neq v}}D_{vv'},
 \]
 where $D_+$ and $D_-$ are obtained from $D$ by multiplying the label of $v$ and the linkings $f_{vv'}$ by $t$ and $t^{-1}$ respectively, 
 and $D_{vv'}$ is obtained from $D$ by pairing $v$ and $v'$ as in Figure \ref{figgroup}.
\end{corollary}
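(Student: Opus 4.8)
The plan is to deduce the identity directly from Lemma~\ref{lem:0LabelledVertex} by exploiting that $\delta$ annihilates $\Al$. Since $\delta=t+a+t^{-1}$ is the annihilator of $\Al$ and $a\in\Q$ (because $\delta\in\Qt$), the label $\gamma_v\in\Al$ of $v$ satisfies
\[
t\gamma_v+a\gamma_v+t^{-1}\gamma_v=\delta\cdot\gamma_v=0\in\Al .
\]
This is the only algebraic input specific to the hypothesis $\delta=t+a+t^{-1}$; the rest is a linearity computation. The strategy is to recognise the combination $D_++aD+D_-$ as a single diagram whose label at $v$ is this vanishing element, and then to feed that $0$--labelled diagram into Lemma~\ref{lem:0LabelledVertex}.

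First I would combine $D_+$, $D$ and $D_-$ using the vertex-linearity relation $\LV$. Although $\LV$ is stated for a splitting into two labels, iterating it lets one write the label at $v$ as an arbitrary finite $\Q$--linear combination of elements of $\Al$, the associated linkings adding up with the same rational coefficients. Here the three elements are $t\gamma_v$, $\gamma_v$, $t^{-1}\gamma_v\in\Al$ and the coefficients are the rationals $1$, $a$, $1$; recalling that $D_+$ (resp. $D_-$) carries the label $t\gamma_v$ (resp. $t^{-1}\gamma_v$) together with the linkings $tf_{vv'}$ (resp. $t^{-1}f_{vv'}$), the relation $\LV$ yields
\[
D_++aD+D_-=D_0,
\]
where $D_0$ is obtained from $D$ by relabelling $v$ with $t\gamma_v+a\gamma_v+t^{-1}\gamma_v=0$ and replacing each linking by $tf_{vv'}+af_{vv'}+t^{-1}f_{vv'}=\delta\,f_{vv'}$. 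Note that $\delta\,f_{vv'}\in\Qt$ is a genuine polynomial (since $f_{vv'}\in\frac1\delta\Qt$), compatible with the label $0$ because $\bl(0,\gamma_{v'})=0$, so $D_0$ is a legitimate $\BlMod$--colored diagram.

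It then remains to apply Lemma~\ref{lem:0LabelledVertex} to $D_0$, whose vertex $v$ is $0$--labelled: this gives $D_0=\sum_{v'\neq v}(D_0)_{vv'}$, where each $(D_0)_{vv'}$ is the pairing of $v$ and $v'$ as in Figure~\ref{figgroup}. Substituting into the previous display and solving for $D_+$ produces exactly
\[
D_+=-aD-D_-+\sum_{\substack{v'\textnormal{ vertex of }D\\v'\neq v}}D_{vv'},
\]
with $D_{vv'}:=(D_0)_{vv'}$. The main point requiring care—and the place where a stray factor of $\delta$ could creep in—is the bookkeeping of the linkings across the $\LV$ step: one must verify that the coefficient $a$ is rational so that $\LV$ genuinely applies, and that the resulting linkings $\delta\,f_{vv'}$ are polynomials compatible with the $0$--label, so that Lemma~\ref{lem:0LabelledVertex} is available and the pairing terms it produces are precisely the $D_{vv'}$ of the statement (the pairing being carried out with these polynomial linkings rather than the original fractional $f_{vv'}$, which is what keeps every term an $\BlMod$--colored diagram).
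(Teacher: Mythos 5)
Your argument is correct and is exactly the derivation the paper intends (the corollary is stated without proof as an immediate consequence of Lemma~\ref{lem:0LabelledVertex}): iterate $\LV$ to rewrite $D_++aD+D_-$ as the diagram with $v$ labelled by $\delta\gamma_v=0$ and linkings $\delta f_{vv'}\in\Qt$, then apply the lemma. Your closing remark on the bookkeeping is also the right reading of the statement—the pairings must be performed with the polynomial linkings $\delta f_{vv'}$, which is consistent with how the corollary is used later (e.g.\ the coefficient $r$ appearing in the proof of Lemma~\ref{lemma4step6legs}).
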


\paragraph{Taming 6 and 4--legs generators.}

We now give two lemmas that initialize the reduction process announced
in Section \ref{secstrategy}.
For that, define \emph{$\YY$--diagrams} similarly as $(\Al,\bl)$--colored diagrams with
underlying graph $\SixLegsPattern$, except that edges are neither
oriented nor labelled. Thanks to $\OR$, those can be thought of as honest
$(\Al,\bl)$--colored diagrams with edges labelled by 1 and oriented
arbitrarily.
Define also $\bHol$ as the relations given in Figure \ref{figbHol}; 
note that $\bHol$ is easily deduced from $\Hol$ and $\EV$.
\begin{figure}[htb] 
\begin{center}
 \begin{tikzpicture} [scale=0.15]
  \draw (-3.4,-2) -- (0,0) -- (0,4) (0,0) -- (3.4,-2);
  \draw (0,4) node {$\scriptstyle{\bullet}$} node[left] {$\eta_1$} node[right] {$\scriptstyle{v_1}$};
  \draw (-3.4,-2) node {$\scriptstyle{\bullet}$} node[below] {$\eta_2$} node[above] {$\scriptstyle{v_2}$};
  \draw (3.4,-2) node {$\scriptstyle{\bullet}$} node[below] {$\eta_3$} node[above] {$\scriptstyle{v_3}$};
  \draw (6,-8) node {$D$};
 \begin{scope} [xshift=11.5cm]
  \draw (-3.4,-2) -- (0,0) -- (0,4) (0,0) -- (3.4,-2);
  \draw (0,4) node {$\scriptstyle{\bullet}$} node[left] {$\eta_4$} node[right] {$\scriptstyle{w_1}$};
  \draw (-3.4,-2) node {$\scriptstyle{\bullet}$} node[below] {$\eta_5$} node[above] {$\scriptstyle{w_2}$};
  \draw (3.4,-2) node {$\scriptstyle{\bullet}$} node[below] {$\eta_6$} node[above] {$\scriptstyle{w_3}$};
 \end{scope}
 \draw (20,0) node {$=$};
 \begin{scope} [xshift=29cm]
  \draw (-3.4,-2) -- (0,0) -- (0,4) (0,0) -- (3.4,-2);
  \draw (0,4) node {$\scriptstyle{\bullet}$} node[left] {$t\eta_1$} node[right] {$\scriptstyle{v_1}$};
  \draw (-3.4,-2) node {$\scriptstyle{\bullet}$} node[below] {$t\eta_2$} node[above] {$\scriptstyle{v_2}$};
  \draw (3.4,-2) node {$\scriptstyle{\bullet}$} node[below] {$t\eta_3$} node[above] {$\scriptstyle{v_3}$};
 \end{scope}
 \draw (35,-8) node {$D'$};
 \begin{scope} [xshift=40.5cm]
  \draw (-3.4,-2) -- (0,0) -- (0,4) (0,0) -- (3.4,-2);
  \draw (0,4) node {$\scriptstyle{\bullet}$} node[left] {$\eta_4$} node[right] {$\scriptstyle{w_1}$};
  \draw (-3.4,-2) node {$\scriptstyle{\bullet}$} node[below] {$\eta_5$} node[above] {$\scriptstyle{w_2}$};
  \draw (3.4,-2) node {$\scriptstyle{\bullet}$} node[below] {$\eta_6$} node[above] {$\scriptstyle{w_3}$};
 \end{scope}
 \draw (60,0) node {$f_{v_iw_j}^{D'}=tf_{v_iw_j}^D$};
 \end{tikzpicture}
\end{center}
 \caption{The relation $\bHol$} \label{figbHol}
\end{figure}

\begin{lemma} \label{lemma2step6legs}
 The space $\A_2(\Al,\bl)$ admits the presentation with:
 \begin{itemize}
  \item as generators: $\YY$--diagrams and all $4_\tinf$--legs diagrams;
  \item as relations: $\AS$, $\LV$, $\LD$, $\Aut$ and $\bHol$ on all
    generators and $\IHX$, $\LE$, $\Hol$, $\OR$ and $\EV$ on $4_\tinf$--legs generators.
 \end{itemize}
 
 The space $\A_2\big(\BlModT\big)$ admits the similar presentation with
 generators restricted to distributed $\big(\BlModT\big)$--colored diagrams and the relation $\Aut$ restricted to $\Aut_{res}$.
\end{lemma}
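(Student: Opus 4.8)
The plan is to start from the presentation provided by Proposition \ref{proppresentation} (in the $\BlModT$ case) or its analogue for a single Blanchfield module, after discarding the diagrams with a $\GenLoop$ component by Lemma \ref{lemmaloop}. First I would classify the degree-$2$ generators according to their number of legs, which is necessarily $0$, $2$, $4$ or $6$. The key observation is that a degree-$2$ diagram has $6$ legs if and only if its two trivalent vertices are not joined by an edge: an internal edge between the two trivalent vertices would absorb one half-edge at each of them, leaving only four legs. Hence a $6$--legs diagram has underlying graph $\SixLegsPattern$, a disjoint union of two tripods, and in particular carries no internal edge.

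Next I would reduce the $6$--legs generators to $\YY$--diagrams. Since every edge of such a diagram joins a trivalent vertex to a leg, I can use $\OR$ to fix the orientation of each edge and then $\EV$ to transfer its label onto the adjacent leg, multiplying the relevant linkings accordingly; the resulting diagram has all edges labelled by $1$, that is, it is a $\YY$--diagram. This shows that the $\YY$--diagrams together with the $4_\tinf$--legs diagrams generate $\A_2(\Al,\bl)$, and it uses up the relations $\OR$ and $\EV$ on $6$--legs diagrams: on $\YY$--diagrams the edges are unoriented and labelled $1$, so $\OR$ becomes trivial, while any instance of $\EV$ relating two $\YY$--diagrams forces the transferred factor to be a unit and reduces to an identity.

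Then I would identify which relations survive on the $\YY$--generators. The relation $\IHX$ is vacuous here, since it requires an internal edge that a $\YY$--diagram does not possess. The relation $\LE$ applied to a $6$--legs diagram splits an edge label $xP+yQ$, and after transferring $P$ and $Q$ to the adjacent leg via $\EV$ this becomes an instance of $\LV$ on the leg label; hence it is subsumed once $\LV$ is retained. The relation $\Hol$ applied at a trivalent vertex multiplies its three edge labels by $t$; transferring these factors to the three legs of that tripod via $\EV$ yields precisely the relation $\bHol$ of Figure \ref{figbHol}, which multiplies the labels of one tripod by $t$ and the crossed linkings by $t$ while leaving the internal linkings unchanged (this last point being the identity $\bl(t\gamma,t\eta)=\bl(\gamma,\eta)$). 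This is exactly the assertion that $\bHol$ is deduced from $\Hol$ and $\EV$, and conversely it shows that on $6$--legs generators $\Hol$ and $\EV$ contribute nothing beyond $\bHol$. By contrast, $\AS$, $\LV$, $\LD$ and $\Aut$ must be kept on all generators: $\AS$ and $\LV$ act directly on $\YY$--diagrams, while $\LD$ relates a $\YY$--diagram to $4$--legs diagrams obtained by pairing two legs, so it genuinely mixes the two families.

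Finally I would assemble these observations into a change of presentation: the map from the free vector space on $\YY$--diagrams and $4_\tinf$--legs diagrams to $\A_2(\Al,\bl)$ is surjective by the reduction above, and its kernel is spanned by the listed relations, because every instance of an original relation, once its $6$--legs terms are rewritten in $\YY$--form, lies in the span of $\AS$, $\LV$, $\LD$, $\Aut$, $\bHol$ (on all generators) and of $\IHX$, $\LE$, $\Hol$, $\OR$, $\EV$ (on $4_\tinf$--legs generators). The statement for $\A_2\big(\BlModT\big)$ then follows from the distributed presentation of Proposition \ref{proppresentation}, restricting the generators to distributed diagrams and replacing $\Aut$ by $\Aut_{res}$, since all the reduction moves above preserve distributedness. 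The main obstacle I anticipate is the bookkeeping of this last step: one must verify that the reduction to $\YY$--form commutes with each relation, so that no relation among $6$--legs diagrams is lost and no spurious one is created, the most delicate point being to confirm that the combined effect of $\Hol$ and $\EV$ on a tripod is captured exactly by a single instance of $\bHol$, with the internal linkings correctly left invariant.
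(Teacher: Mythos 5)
Your proposal is correct and follows essentially the same route as the paper: reduce every $6$--legs degree-$2$ diagram (whose underlying graph is necessarily two tripods) to a $\YY$--diagram by pushing edge labels onto the legs via $\OR$ and $\EV$, then check that $\IHX$ is vacuous, that $\LE$, $\OR$, $\EV$ are absorbed into $\LV$ or trivialized by the reduction, and that $\Hol$ combined with $\EV$ becomes exactly $\bHol$, keeping $\AS$, $\LV$, $\LD$, $\Aut$ on all generators. The only cosmetic differences are that the paper first applies $\LE$ to decompose edge labels into powers of $t$ so as to make the $\YY$--decomposition canonical before verifying that each original relation maps to zero under the induced map $\tau$, and that your appeal to Lemma \ref{lemmaloop} is unnecessary here since the statement retains all $4_\tinf$--legs diagrams as generators.
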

\begin{proof}
 Any degree two $(\Al,\bl)$--colored diagram with six legs has underlying graph $\SixLegsPattern$. Using $\LE$, 
 any such diagram can be written as a $\Q$--linear combination of diagrams having all edges labelled by powers of $t$. 
 Then, using $\OR$ and $\EV$, these powers of $t$ can be pushed to the legs. This produces a canonical decomposition of any 6--legs
 diagram in terms of $\YY$--diagrams. Hence it provides a $\Q$--linear map from the $\Q$--vector space freely generated by all $(\Al,\bl)$--colored diagrams 
 of degree 2 to the module $\A'_2(\Al,\bl)$ defined by the presentation given in the statement. 
 This map descends to a well-defined map $\tau$ from $\A_2(\Al,\bl)$ to $\A'_2(\Al,\bl)$. 
 Indeed, it is sufficient to check that all generating relation in $\A_2(\Al,\bl)$ is sent to zero. It
 is immediate for $\AS$, $\LE$, $\OR$, $\LV$, $\LD$ and $\Aut$; it is true for $\EV$ and $\Hol$ by applying $\LV$ and
 $\bHol$ respectively on the image; it also holds for $\IHX$ since there is no such
 relation involving diagrams with underlying graph $\SixLegsPattern$.

 Now, it is clear that sending a diagram to itself gives a well-defined map $\A'_2(\Al,\bl)\to\A_2(\Al,\bl)$ which is the inverse of $\tau$.
\end{proof}

Now, we address the case of 4--legs generators. For that, we define 
\emph{$\H$--diagrams} similarly as $(\Al,\bl)$--colored diagrams with
underlying graph $\FourLegsPattern$, except that edges are neither
oriented nor labelled. Again, thanks to $\OR$, those can be thought of as honest
$(\Al,\bl)$--colored diagrams with edges labelled by 1 and oriented arbitrarily.

\begin{lemma} \label{lemma2step4legs}
 The space $\widehat{\A}_2^{(4)}(\Al,\bl)$ admits the presentation
 with:
 \begin{itemize}
  \item as generators: $\H$--diagrams and all $2_\tinf$--legs diagrams;
  \item as relations: $\AS$, $\IHX$, $\LV$, $\LD$ and $\Aut$ on all generators and $\LE$, $\Hol$, $\OR$ and $\EV$ on $2_\tinf$--legs generators.
 \end{itemize}
 The space $\widehat{\A}^{(4)}_2\big(\BlModT\big)$ admits the similar presentation with
 generators restricted to distributed $\big(\BlModT\big)$--colored diagrams and the relation $\Aut$ restricted to $\Aut_{res}$.
\end{lemma}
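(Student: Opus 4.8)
The plan is to mimic the proof of Lemma~\ref{lemma2step6legs}: I will exhibit a presentation-independent normal form for $4_\tinf$--legs diagrams and use it to build mutually inverse maps between $\widehat{\A}_2^{(4)}(\Al,\bl)$ and the space $\widehat{\A}'$ defined by the reduced presentation in the statement. The only degree-two diagrams with exactly four legs have underlying graph $\FourLegsPattern$, that is two trivalent vertices joined by a single internal edge, each carrying two legs. First I would use $\LE$ to write any such diagram as a $\Q$--linear combination of diagrams whose edges are labelled by monomials $t^k$; then, exactly as in the six-legs case, $\OR$ and $\EV$ push the labels of the four \emph{leg} edges onto the adjacent univalent vertices. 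The novelty compared with Lemma~\ref{lemma2step6legs} is the internal edge, which carries no univalent vertex and so cannot be emptied by $\EV$. To deal with it I would single out one of the two trivalent vertices, say $a$, and apply $\Hol$ at $a$ to set the internal label to $1$, absorbing the resulting power of $t$ into the two leg edges at $a$ before pushing them onto the legs. This fixed procedure assigns to every four-legs generator a canonical $\H$--diagram and leaves every $2_\tinf$--legs diagram untouched, hence defines a $\Q$--linear map $\tau$ from the free module on $4_\tinf$--legs degree-two diagrams to $\widehat{\A}'$.

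Next I would check that $\tau$ kills every defining relation of $\widehat{\A}_2^{(4)}(\Al,\bl)$, so that it descends to that space. The relations $\LE$ and $\OR$ are built into the definition of $\tau$, while $\AS$, $\LV$, $\LD$ and $\Aut$ survive verbatim in $\widehat{\A}'$ and are respected by the normal form. Applying $\EV$ to a leg edge is again part of the construction of $\tau$, and $\EV$ cannot be applied to the internal edge; as in Lemma~\ref{lemma2step6legs}, the residual effect of $\EV$ on the normalized image is an instance of $\LV$. In contrast to the six-legs situation, the relation $\IHX$ is now \emph{non-vacuous}: applied to the internal edge it relates the three $\H$--diagrams obtained from the three ways of pairing the four legs, all already in normal form, which is exactly the $\IHX$ relation retained among $\H$--generators; hence $\tau(\IHX)=0$.

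The crucial point is the relation $\Hol$, which in the reduced presentation is imposed only on $2_\tinf$--legs generators. At the distinguished vertex $a$ it is absorbed into the normalization, so $\tau$ maps it to zero tautologically. At the other vertex $b$, however, $\Hol$ alters the internal label, and re-normalizing via $\Hol$ at $a$ redistributes the extra power of $t$ over \emph{all four} legs. A short bookkeeping with the $\EV$--rule for linkings settles the effect on the fractions: since pushing a factor $t$ to $v$ multiplies $f_{vv'}$ by $t$ but, when pushed to $v'$, acts on $f_{v'v}(t)=f_{vv'}(t^{-1})$, scaling both ends of a pair leaves its linking unchanged, so all four linkings are preserved. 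Thus the image of $\Hol$ at $b$ is precisely the difference between a diagram and the same diagram with all leg labels multiplied by $t$ and all linkings unchanged; this is exactly the $\Aut$ relation for multiplication by $t$, which lies in $\Aut(\Al,\bl)$ because $\bl(t\gamma,t\eta)=\bl(\gamma,\eta)$. Hence $\tau(\Hol)=0$ as well. I expect this identification of $\Hol$ at the far vertex with the multiplication-by-$t$ automorphism to be the main obstacle, precisely because it is what forces $\Aut$ (rather than a $\bHol$--type relation, as in the six-legs case) to be kept on all generators.

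Finally, sending each generator of $\widehat{\A}'$ to itself defines a map $\widehat{\A}'\to\widehat{\A}_2^{(4)}(\Al,\bl)$: it is well defined because the relations of $\widehat{\A}'$ form a subset of those of $\widehat{\A}_2^{(4)}(\Al,\bl)$, and it is a two-sided inverse of $\tau$, since $\H$--diagrams and $2_\tinf$--legs diagrams are already normal forms while the normalization of a general four-legs diagram is a consequence of the relations of $\widehat{\A}_2^{(4)}(\Al,\bl)$. The $\big(\BlModT\big)$--version is proved identically, restricting generators to distributed diagrams via Proposition~\ref{proppresentation}; one only has to observe that multiplication by $t$ on a single copy $\Al_i$ fixes $\Al_i$ setwise and the other summands pointwise, hence belongs to $\Aut_{res}$, so that the argument identifying $\Hol$ at the far vertex with an $\Aut_{res}$ relation carries over.
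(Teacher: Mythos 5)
Your normalization of the $\H$--shaped generators is essentially the paper's argument: remove the label of the central edge with $\Hol$, push the remaining labels onto the legs with $\OR$ and $\EV$, and observe that the residual $\Hol$ relation at the far trivalent vertex becomes the $\Aut$ relation for the automorphism ``multiplication by $t$'' of the whole Blanchfield module---which is precisely why $\Aut$, unlike in the six-legs case, must be retained on the four-legs generators. That part, including the bookkeeping on the linkings and the remark that a change of distinguished vertex is absorbed by this same $\Aut$ relation, is correct and matches the paper.

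There is, however, a genuine gap at the very first step: it is not true that every degree-two diagram with exactly four legs has underlying graph $\FourLegsPattern$. Diagrams need not be connected, so a four-legs diagram of degree two can also be the disjoint union of a lollipop $\GenLoop$ (one trivalent vertex carrying a self-loop and one leg) and a tripod (one trivalent vertex with three legs). Such diagrams are generators of $\widehat{\A}_2^{(4)}(\Al,\bl)$ but are neither $\H$--diagrams nor $2_\tinf$--legs diagrams, so your map $\tau$ is not defined on them and the presentation you compare with is missing generators whose redundancy you have not established. This is exactly why the paper's proof begins by invoking Lemma~\ref{lemmaloop}, which uses $\Hol'$, $\EV$, $\LV$ and Lemma~\ref{lem:0LabelledVertex} to rewrite any diagram containing a component $\GenLoop$ as a combination of diagrams with strictly fewer legs (hence, here, of $2_\tinf$--legs generators), and checks that all relations involving such diagrams are likewise recovered. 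Once that reduction is inserted at the start, the rest of your argument goes through; note that in the six-legs case no such step was needed because the half-edge count forces the underlying graph to be $\SixLegsPattern$, which is why the analogue of your opening claim is true there but fails here.
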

\begin{proof}
 First use Lemma \ref{lemmaloop} to reduce the 4--legs generators
 to those with underlying graph $\FourLegsPattern$ and then proceed as
 in the previous lemma. Here, the relation $\Hol$ is also needed
 to remove the power of $t$ from the central edge and the obtained
 decomposition is not anymore canonical. 
 However, two possible decompositions are related by the relation of $\Aut$
 associated with the automorphism that multiplies the whole Blanchfield module by $t$. 
\end{proof}

\paragraph{Taming leg labels.}

Now, we want to go further in the reduction of the presentations. Fix a $\Q$-basis $\omega$ of $\Al$. For all $\gamma,\eta\in\omega$, 
fix $f(\gamma,\eta)\in\Q(t)$ such that
$\bl(\gamma,\eta)=f(\gamma,\eta)\ mod\ \Qt$.
For $\ell\geq1$, identify $\BlModl$ with $\oplus_{1\leq i\leq \ell}(\Al_i,\bl_i)$ 
and let $\Omega$ be the union of the $\xi_i(\omega)$ for $i=1,\dots,\ell$. 
An $\BlModl$--colored diagram (resp. $\YY$--diagram, $\H$--diagram) is called \emph{$\omega$--admissible}, or
simply \emph{admissible} when there is no ambiguity on $\omega$, if:
\begin{enumerate}[label=(\roman*)]
 \item its legs are colored by elements of $\Omega$, 
 \item\label{item:linking} for two vertices $v$ and $w$ that are respectively colored by $\xi_i(\gamma)$ and $\xi_j(\eta)$, $f_{vw}=f(\gamma,\eta)$ 
  if $i=j$ and $f_{vw}=0$ otherwise.
\end{enumerate}
Every $\BlModl$--colored diagram (resp. $\YY$--diagram, $\H$--diagram) $D$ has a canonical \emph{$\omega$--reduction}, 
which is the decomposition as a $\Q$--linear sum of $\omega$--admissible diagrams obtained as follows. Write all the labels
of the legs as $\Q$--linear sums of elements of $\Omega$. Then use $\LV$ to write $D$ as a $\Q$--linear sum of diagrams 
with legs labelled by $\Omega\cup\{0\}$ and the $\Omega$-labelled legs satisfying Condition \ref{item:linking}. 
Finally, apply repeatedly Lemma \ref{lem:0LabelledVertex} to remove $0$--labelled vertices.

In the next step, we will not be able to reduce further the sets of generators and relations without rewriting some of the relations first. 
Denote by $\Aut^\omega$ the set of relations $D=\Sigma$ where $D$ is an $\omega$--admissible diagram and $\Sigma$ is the $\omega$--reduction 
of $\zeta.D$ for $\zeta\in\Aut\BlMod$. Define similarly $\Aut_{res}^\omega$ and $\Aut_t^\omega$. Define $\bHol^\omega$ as the set of relations 
that identify an $\omega$--admissible diagram $D$ with the $\omega$--reduction of the corresponding diagram $D'$ of Figure \ref{fighol'}. 

In general, if a family of generators is given for the group $\Aut(\Al,\bl)$, then the $\Aut$ relations, as well as the $\Aut^\omega$ relations, 
can be restricted to the set of relations provided by the automorphisms of this generating family.

\begin{lemma} \label{lemma3step6legs}
 The space $\A_2\BlMod$ admits the presentation with:
 \begin{itemize}
  \item as generators: $\omega$--admissible $\YY$--diagrams and all $4_\tinf$--legs diagrams;
  \item as relations: $\AS$, $\Aut^\omega$ and $\bHol^\omega$ on $6$--legs generators and $\AS$, $\IHX$, $\Hol$, $\LE$, $\OR$,
   $\LV$, $\LD$, $\EV$ and $\Aut$ on $4_\tinf$--legs generators. 
 \end{itemize}
 The space $\A_2\big(\BlModT\big)$ admits the similar
 presentation with generators restricted to distributed
 $\big(\BlModT\big)$--colored diagrams and the relations $\Aut^\omega$
 restricted to $\Aut^\omega_{res}$. If $\Al$ is cyclic, $\Aut_{res}^\omega$
 can be replaced by the union of $\Aut_\xi$ and $\Aut_t^\omega$.
\end{lemma}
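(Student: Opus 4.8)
The plan is to obtain this presentation from the one of Lemma~\ref{lemma2step6legs} by pushing the $\omega$--reduction through it, in the same spirit in which Lemma~\ref{lemma2step6legs} was derived. First note that the $4_\tinf$--legs part is unaffected: the $4_\tinf$--legs relations in the two statements coincide, save for $\bHol$, which on $4_\tinf$--legs generators follows from $\Hol$ and $\EV$. So the only genuine change bears on the $6$--legs generators, where the $\YY$--diagrams are restricted to the $\omega$--admissible ones and the relations $\AS,\LV,\LD,\Aut,\bHol$ are to be traded for $\AS,\Aut^\omega,\bHol^\omega$. Write $\A'_2\BlMod$ for the module presented as in the statement and, for a diagram $D$, write $\rho(D)$ for its $\omega$--reduction. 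I would define $\tau\colon\A_2\BlMod\to\A'_2\BlMod$ on generators by the identity on $4_\tinf$--legs diagrams and by $D\mapsto\rho(D)$ on $\YY$--diagrams; since deleting a $0$--labelled vertex through Lemma~\ref{lem:0LabelledVertex} lowers the number of legs, $\rho(D)$ is a combination of $\omega$--admissible $\YY$--diagrams and of $4_\tinf$--legs diagrams, all of which are generators of $\A'_2\BlMod$.

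The crux is to check that $\tau$ sends each relation of Lemma~\ref{lemma2step6legs} to zero. The relation $\AS$ is preserved because $\rho$ acts only on leg labels; the relations $\LV$ and $\LD$ go to zero because they are precisely the elementary moves defining $\rho$, so their two sides have equal $\omega$--reduction (this uses the asserted canonicity of $\rho$); and, by $\Q$--linearity, it suffices to treat $\Aut$ and $\bHol$ on an $\omega$--admissible $D$, where $\tau(D)=D$ and the images are exactly the defining instances of $\Aut^\omega$ and $\bHol^\omega$. Conversely, let $j\colon\A'_2\BlMod\to\A_2\BlMod$ be the identity on generators. It is well defined since each relation of $\A'_2\BlMod$ holds in $\A_2\BlMod$: for admissible $D$, the full relation $\Aut$ gives $D=\zeta.D$ while $\LV$ and $\LD$---available on all generators in the presentation of Lemma~\ref{lemma2step6legs}---give $\zeta.D=\rho(\zeta.D)$, which is exactly $\Aut^\omega$; similarly $\bHol$ yields $\bHol^\omega$. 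Finally $\tau$ and $j$ are mutually inverse: both composites fix $4_\tinf$--legs generators, one has $\rho(D)=D$ in $\A_2\BlMod$ so $j\circ\tau=\mathrm{id}$, and $\tau\circ j$ fixes each admissible diagram since $\rho(D)=D$ for such $D$. Hence $\tau$ is the desired isomorphism.

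For $\A_2\big(\BlModT\big)$ the same scheme applies, using the second parts of Lemma~\ref{lemma2step6legs} and of Proposition~\ref{proppresentation}: restrict all generators to distributed $\big(\BlModT\big)$--colored diagrams and $\Aut$ to $\Aut_{res}$, noting that $\rho$ preserves the distributed condition because it rewrites labels copy by copy. When $\Al$ is cyclic, $\Aut\big(\BlModT\big)$ is generated by the permutation automorphisms together with multiplication by $t$ and by $-1$ on a single copy, so the $\Aut_{res}^\omega$ relations reduce to those of $\Aut_\xi^\omega$, $\Aut_t^\omega$ and $\Aut_{-1}^\omega$; here $\Aut_\xi^\omega=\Aut_\xi$ since permutations preserve admissibility, and $\Aut_{-1}^\omega$ is vacuous because negating the two labels of a single pair of a distributed $\YY$--diagram multiplies it by $(-1)^2=1$ through $\LV$. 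Thus only $\Aut_\xi\cup\Aut_t^\omega$ survives, as claimed.

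The step I expect to be most delicate is the verification that $\tau$ annihilates the $\LD$, $\Aut$ and $\bHol$ relations on $6$--legs generators: this is where one must lean on the canonicity of $\rho$ and keep track of the $4_\tinf$--legs terms produced by the emerging $0$--labelled vertices, and where the definitions of $\Aut^\omega$ and $\bHol^\omega$---as the $\omega$--reductions of $\zeta.D$ and of the $\bHol$--partner of $D$---are precisely what make the bookkeeping close.
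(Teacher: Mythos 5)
Your proof follows essentially the same route as the paper: define $\tau$ by $\omega$--reduction, observe that the only delicate relations are $\Aut$ and $\bHol$, kill them by checking that $\rho(\zeta.D)$ decomposes compatibly with $\rho(D)=\sum_i\alpha_iD_i$ so that the image is a combination of $\Aut^\omega$ (resp. $\bHol^\omega$) relations on admissible diagrams, and invert by sending generators to themselves; your treatment of the final assertion ($\Aut_\xi$ preserves admissibility, $\Aut_{-1}$ acts by $(-1)^2=1$ on distributed diagrams via $\LV$) also matches the paper. One inaccuracy: $\Aut\big(\BlModT\big)$ is \emph{not} generated by permutations together with multiplications by $t$ and $-1$ on single copies (compare the automorphisms $\lambda_{P,Q}$ of Lemma \ref{lemmaautA2}, which mix copies); but this claim is not load-bearing, since in the cyclic case $\Aut_{res}$ is by \emph{definition} the union of $\Aut_\xi$, $\Aut_t$ and $\Aut_{-1}$, and the restriction from $\Aut$ to $\Aut_{res}$ is already inherited from Proposition \ref{proppresentation} through Lemma \ref{lemma2step6legs}.
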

\begin{proof}
 Starting from the presentation given in Lemma \ref{lemma2step6legs} and using the $\omega$--reduction, 
 one can proceed as in the proof of Lemma \ref{lemma2step6legs}.
 The only difficulty is to prove that the $\omega$--reduction of all $\Aut$ and $\bHol$ relations are indeed zero in
 the new presentation. To see that for $\Aut$, consider a relation $D=\zeta.D$ for an $\BlMod$--colored
 diagram $D$ and an automorphism $\zeta\in\Aut\BlMod$. Let $D=\sum_i\alpha_i D_i$ be the $\omega$--reduction of $D$. 
 For each $i$, write $\zeta.D_i=\sum_s\beta^i_sD^i_s$ the $\omega$--reduction of the diagram $\zeta.D_i$. 
 Check that $\zeta.D=\sum_i\alpha_i\sum_s\beta^i_sD^i_s$ is the $\omega$--reduction of $\zeta.D$. It follows that 
 the relation $D=\zeta.D$ is sent onto a $\Q$--linear combination of the relations $D_i=\sum_s\beta^i_sD^i_s$, 
 which are in $\Aut^\omega$. Relations $\bHol$ can be handled similarly.

 For the last assertion, note that the relation $\Aut_\xi$ never identifies an admissible diagram with a non-admissible one and that the relation $\Aut_{-1}$ 
 on admissible distributed diagrams only induces trivial relations.
\end{proof}

For the reduction of the $4$--legs generators, we focus on the
$\BlModT$ case and we introduce a more restrictive notion of admissible diagrams. 
An $\omega$--admissible $\H$--diagram is \emph{strongly $\omega$--admissible},
or simply \emph{strongly admissible} when there is no ambiguity on $\omega$, if its legs are colored in $\Al_1$ and $\Al_2$ and if two legs adjacent to 
a same trivalent vertex are labelled in different $\Al_i$'s.

\begin{lemma} \label{lemma3step4legs}
 The space $\widehat{\A}^{(4)}_2\big(\BlModT\big)$ admits the presentation with:
 \begin{itemize}
  \item as generators: strongly $\omega$--admissible $\H$--diagrams and all $2_\tinf$--legs diagrams;
  \item as relations: $\AS$ and $\Aut^\omega_{res}$ on $4$--legs generators and $\AS$, $\IHX$, $\LE$, $\Hol$, $\OR$, $\LV$, $\LD$, $\EV$ and $\Aut$
   on $2_\tinf$--legs generators.
 \end{itemize}
If $\Al$ is cyclic, $\Aut_{res}^\omega$ can be replaced by the union of $\Aut_\xi$ and $\Aut_t^\omega$.
\end{lemma}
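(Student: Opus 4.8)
The plan is to follow the same two-stage scheme as in the proof of Lemma~\ref{lemma3step6legs}, starting from the presentation of $\widehat{\A}^{(4)}_2\big(\BlModT\big)$ furnished by Lemma~\ref{lemma2step4legs}, whose $4$--legs generators are the distributed $\H$--diagrams with relations $\AS$, $\IHX$, $\LV$, $\LD$ and $\Aut_{res}$. First I would apply the $\omega$--reduction to each $\H$--diagram: write the leg labels in the basis $\Omega$, use $\LV$ to split $D$ into a $\Q$--linear combination of diagrams whose legs are labelled in $\Omega\cup\{0\}$ and satisfy the admissibility condition on the linkings, and then invoke Lemma~\ref{lem:0LabelledVertex} to absorb every $0$--labelled vertex into diagrams with fewer legs, i.e.\ into $2_\tinf$--legs generators. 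This rewrites every $\H$--diagram as a combination of $\omega$--admissible $\H$--diagrams and $2_\tinf$--legs diagrams, consuming the relations $\LV$ and $\LD$ on the $4$--legs generators and turning $\Aut_{res}$ into $\Aut^\omega_{res}$, exactly as in Lemma~\ref{lemma3step6legs}.

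The genuinely new step is the passage from $\omega$--admissible to strongly $\omega$--admissible $\H$--diagrams, and it is here that $\IHX$ is used. A distributed $4$--legs diagram uses exactly two of the three copies, so up to $\Aut_\xi$ I may assume its legs are coloured in $\Al_1$ and $\Al_2$, with two legs in each copy. For such a fixed colouring, the internal edge of the $\H$--diagram may be resolved in three ways, giving the three terms of an $\IHX$ relation: one of them carries both $\Al_1$--legs on a single trivalent vertex and both $\Al_2$--legs on the other---the non-strongly-admissible \emph{parallel} channel---while the other two place one $\Al_1$--leg and one $\Al_2$--leg at each vertex, and are therefore strongly admissible. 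Since $\IHX$ only rearranges the trivalent part and leaves the leg labels and all linkings untouched, it preserves $\omega$--admissibility, so the relation $\IHX$ expresses the parallel channel (up to $\AS$ to reorder legs at each vertex) as a combination of the two strongly admissible crossing channels, staying within $\omega$--admissible diagrams. The structural observation to state carefully is that, because a distributed colouring always has exactly two legs in $\Al_1$ and two in $\Al_2$, each $\IHX$ triple contains \emph{exactly one} parallel diagram; moreover each crossing diagram belongs to the unique $\IHX$ relation determined by its underlying colours. Hence eliminating the parallel generators is a triangular substitution that leaves the strongly admissible generators free, and $\IHX$ is entirely consumed, inducing no relation among them.

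It then remains to check that the defining relations descend correctly. The main verification---and the step I expect to be the chief obstacle---is the same bookkeeping as in Lemma~\ref{lemma3step6legs}: one must confirm that the $\omega$--reduction of each relation $D=\zeta.D$ with $\zeta\in\Aut_{res}$ is a $\Q$--linear combination of $\Aut^\omega_{res}$ relations on the new generators, and, since the $\omega$--reduction may now produce parallel channels, that substituting these via $\IHX$ introduces no spurious relation among strongly admissible diagrams. This follows from the one-parallel-per-triple observation above, which makes the elimination clean. The inverse map sends each strongly admissible $\H$--diagram to itself, as in the previous lemmas.

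For the final assertion, in the cyclic case $\Aut(\Al,\bl)$ is generated by multiplication by $t$ and by $-1$; I would note that $\Aut_\xi$ never leaves the class of admissible diagrams and that $\Aut_{-1}$ induces only trivial relations on admissible distributed diagrams, so that $\Aut^\omega_{res}$ may be replaced by the union of $\Aut_\xi$ and $\Aut_t^\omega$, exactly as in Lemma~\ref{lemma3step6legs}.
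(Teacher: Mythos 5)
Your proof is correct and follows essentially the same route as the paper: the $\omega$--reduction as in Lemma~\ref{lemma3step6legs}, then $\Aut_\xi$ to bring the labels into $\Al_1$ and $\Al_2$ and a single $\IHX$ relation to express the unique non-strongly-admissible (parallel) pairing in terms of the two strongly admissible ones --- your ``one parallel term per $\IHX$ triple'' observation being precisely what makes the paper's decomposition canonical. One minor slip: in the last paragraph you assert that $\Aut\BlMod$ is generated by multiplication by $t$ and by $-1$ in the cyclic case, which is false in general (compare Lemma~\ref{lemmaautA2}); it is also unnecessary, since $\Aut_{res}$ is \emph{by definition} restricted to $\Aut_\xi$, $\Aut_t$ and $\Aut_{-1}$ when $\Al$ is cyclic, and the justification you then give ($\Aut_\xi$ preserves admissibility, $\Aut_{-1}$ acts trivially on admissible distributed diagrams) is the correct one.
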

\begin{proof}
Via at most one $\Aut_\xi$ relation, any $\omega$--admissible $\H$--diagram 
is equal to an $\omega$--admissible $\H$--diagram whose legs are labelled by $\Al_1$ and $\Al_2$. Moreover, if $\gamma_1,\eta_1\in\Al_1$ and 
$\gamma_2,\eta_2\in\Al_2$, then the $\IHX$ relation gives:
 \[
 \fourlegsop{\gamma_1}{\eta_1}{\gamma_2}{\eta_2}{0.12}=\fourlegsop{\gamma_1}{\gamma_2}{\eta_1}{\eta_2}{0.12}-\fourlegsop{\gamma_1}{\eta_2}{\eta_1}{\gamma_2}{0.12}.
 \]
It follows that any $\H$--diagram has a canonical decomposition
in terms of strongly $\omega$--admissible $\H$--diagrams. Proceed then
as in the proof of Lemma \ref{lemma3step6legs}.
\end{proof}

A set $\E$ of $\omega$--admissible $\YY$--diagrams (resp. $\H$--diagrams) is \emph{essential} if any $\omega$--admissible
$\YY$--diagram (resp. $\H$--diagram) which is not in $\E$ is either equal to a diagram in $\E$ via an $\AS$ or $\Aut_\xi$ relation,
or trivial by $\AS$. Denote by $\Aut^\E$ the set of relations $D=\Sigma$, where $D$ is an element of $\E$ and $\Sigma$ is the $\omega$--reduction 
of $\zeta.D$ for some $\zeta\in\Aut\BlMod$, rewritten in terms of $\E$. Define similarly $\bHol^\E$ and
$\Aut_*^\E$, where $\Aut_*$ is any subfamily of $\Aut$ described as the relations arising from the action of a subset of
$\Aut\BlMod$---for instance $\Aut_{res}$ or $\Aut_t$.

\begin{lemma} \label{lemmacheckrelations}
 If $\E$ is an essentiel set of $\omega$--admissible $\YY$--diagrams (resp. $\H$--diagrams), then the $\YY$--diagrams
 (resp. $\H$--diagrams) in the set of generators of the presentation given in Lemma~\ref{lemma3step6legs} (resp. Lemma~\ref{lemma3step4legs}) 
 can be restricted to $\E$ and the relations $\Aut^\omega$, $\Aut_{res}^\omega$, $\Aut_t^\omega$ and $\bHol^\omega$ can be
 replaced by $\Aut^\E$, $\Aut_{res}^\E$, $\Aut_t^\E$ and $\bHol^\E$ respectively. Moreover, if $\E$ is minimal, then $\AS$
 and $\Aut_\xi$ on $\YY$--diagrams (resp. $\H$-diagrams) can be removed from the set of relations.
\end{lemma}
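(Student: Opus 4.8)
The plan is to treat the statement as a sequence of Tietze transformations on the presentation of Lemma~\ref{lemma3step6legs} (resp.\ Lemma~\ref{lemma3step4legs}): first eliminate the redundant $\YY$--generators (resp.\ strongly $\omega$--admissible $\H$--generators) by means of $\AS$ and $\Aut_\xi$, and then rewrite the surviving relations in terms of $\E$. Since the $4_\tinf$--legs (resp.\ $2_\tinf$--legs) generators and the relations they carry are never involved, it suffices to analyse the $6$--legs (resp.\ $4$--legs) part of the presentation.

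First I would eliminate generators. By the definition of an essential set, every admissible $\YY$--diagram (resp.\ $\H$--diagram) $D\notin\E$ either equals $\pm D'$ for a representative $D'\in\E$ through a single $\AS$ or $\Aut_\xi$ relation, or equals $0$ by $\AS$. In either case $D$ is expressed as a $\Q$--combination of elements of $\E$ using exactly one of the relations we intend to discard, so the associated Tietze move is valid and the generating set shrinks precisely to $\E$ together with the untouched lower-legs diagrams.

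Next I would rewrite the relations $\Aut^\omega$, $\Aut_{res}^\omega$, $\Aut_t^\omega$ and $\bHol^\omega$. When the left-hand side $D$ of such a relation already lies in $\E$, substituting the eliminated generators into its right-hand side turns it, by the very definition of $\Aut^\E$ (resp.\ $\Aut_{res}^\E$, $\Aut_t^\E$, $\bHol^\E$), into exactly the corresponding relation of that $\E$--family. The technical heart is to show that a relation with $D\notin\E$ is redundant: if $D=\pm\zeta_0.D'$ with $D'\in\E$ and $\zeta_0$ the relevant $\Aut_\xi$ automorphism (or $D=\pm D'$ via $\AS$), then for any $\zeta$ the $\omega$--reduction of $\zeta.D$ coincides, up to sign, with that of $(\zeta\zeta_0).D'$; since $\zeta\zeta_0\in\Aut\BlMod$ belongs to the same subfamily, this is precisely the $\E$--relation attached to $D'$ and to $\zeta\zeta_0$. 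Hence each $\Aut^\omega$ relation follows from $\Aut^\E$, and conversely $\Aut^\E$ sits inside $\Aut^\omega$ after substitution, so the two families cut out the same quotient; the identical argument applies to $\Aut_{res}^\omega$, $\Aut_t^\omega$ and $\bHol^\omega$.

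Finally, for the minimality claim I would check that no nontrivial $\AS$ or $\Aut_\xi$ relation survives among the generators in $\E$. Such a relation identifies $D\in\E$ with $\pm D''$ for some admissible diagram $D''$; reducing $D''$ to its representative yields either $D=\pm D'''$ with $D'''\in\E$ distinct from $D$---impossible for a minimal $\E$, since $D$ and $D'''$ would then lie in the same $\AS$/$\Aut_\xi$--class---or $D=-D$, forcing $D$ to be $\AS$--trivial and thus excluded, or the vacuous identity $D=D$. Therefore the $\AS$ and $\Aut_\xi$ relations on $\YY$--diagrams (resp.\ $\H$--diagrams) may be dropped. The obstacle I most anticipate is the sign bookkeeping in the redundancy step---verifying that the sign created by composing $\zeta$ with the $\AS$/$\Aut_\xi$ move matches the sign appearing when $(\zeta\zeta_0).D'$ is rewritten over $\E$---which becomes a finite, routine verification once a consistent choice of representatives and signs for $\E$ has been fixed.
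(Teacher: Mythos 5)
Your overall scheme (eliminate the non-$\E$ generators by $\AS$ and $\Aut_\xi$, then show that the relations whose left-hand side lies outside $\E$ are redundant) is the same as the paper's, but the step you yourself call the technical heart contains a genuine error. Writing $D=\pm\zeta_0.D'$ with $D'\in\E$ and $\zeta_0=\xi_{ij}$, you convert the relation $D=\omega\text{-red}(\zeta.D)$ into ``the $\E$--relation attached to $D'$ and $\zeta\zeta_0$'' and assert that $\zeta\zeta_0$ belongs to the same subfamily. This fails precisely in the cases the lemma is applied to: $\Aut_t$ and $\Aut_{res}$ are defined by \emph{subsets} of $\Aut\BlMod$ that are not stable under right composition with a permutation automorphism. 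If $\zeta$ is multiplication by $t$ on $\Al_i$ and the identity elsewhere, then $\zeta\circ\xi_{ij}$ is neither a permutation nor a one-factor multiplication, so $D'=\omega\text{-red}\big((\zeta\xi_{ij}).D'\big)$ is not an element of $\Aut_t^\E$ or $\Aut_{res}^\E$, and the redundancy is not established. The correct automorphism is the \emph{conjugate} $\xi_{ij}\circ\zeta\circ\xi_{ij}$, which does remain in each subfamily under consideration (conjugation by a permutation sends multiplication by $t$ on $\Al_i$ to multiplication by $t$ on $\Al_j$, permutations to permutations, and automorphisms fixing one $\Al_k$ setwise to automorphisms of the same type); the relation for $D$ and $\zeta$ is then recovered by applying $\xi_{ij}$ to the $\Aut_*^\E$--relation for $D'$ and $\xi_{ij}\zeta\xi_{ij}$, using the retained $\Aut_\xi$ relation. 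This is exactly what the paper does. (For $\bHol$ your composite happens to coincide with the conjugate, since multiplication by $t$ on the legs commutes with $\xi_{ij}$, so that case is unaffected.)

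A second, smaller omission: in the redundancy step you only treat $D=\pm D'$ via $\AS$ and $D=\pm\xi_{ij}.D'$, but an essential set also allows a diagram $D\notin\E$ that is $\AS$--trivial ($D=-D$). For such a $D$ you must still check that the relation $D=\omega\text{-red}(\zeta.D)$, i.e.\ $0=\omega\text{-red}(\zeta.D)$, is a consequence of the retained relations; the paper does this by observing that the terms in the decomposition of $\zeta.D$ are themselves $\AS$--trivial or cancel in pairs. Your treatment of the generator elimination and of the minimality claim is fine and matches the paper.
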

\begin{proof}
 If an $\omega$--admissible diagram is trivial by $\AS$, then a relation $\bHol$
 or $\Aut$ involving this diagram gives a trivial relation; indeed, the terms 
 in the corresponding decomposition are trivial or cancel by pairs. Similarly, if two $\omega$--admissible diagrams are related by a relation $\AS$, then 
 the relations $\bHol$ and $\Aut$ applied to these diagrams provide
 the same relations.

 If $D$ is an $\omega$--admissible diagram and $D'=\xi_{ij}.D$ for some permutation automorphism $\xi_{ij}$, then any $\bHol$
 relation involving $D'$ is recovered from the action of $\xi_{ij}$ on the corresponding $\bHol$ relation involving $D$, and the relation
 resulting from the action of some automorphism $\zeta$ on $D'$ is recovered by the action of  $\xi_{ij}\circ\zeta\circ\xi_{ij}$ on $D$.

 For the last assertion, it is sufficient to notice that an $\AS$ relation makes either two generators to be equal, or a generator to be
 trivial, and that an $\Aut_\xi$ relation always identifies two generators.
\end{proof}

At this point, we have reduced the presentation for $\A_2\BlMod$ so
that we only have to consider non ($\AS$ and $\Aut_\xi$--trivially)
redundant $\YY$--diagrams with prescribed rational fractions on pairs of vertices depending
only on the labels, which are all in a given $\Q$--basis
of $\Al$, and $4_\tinf$--legs diagrams; the $\YY$--diagrams being only
subject to $\Aut$ and $\bHol$ relations rewritten in these
$\YY$--diagrams.

A similar reduction has been done for
$\A_2\big(\BlModT\big)$, where $\Aut$ is even replaced by
$\Aut_{res}$; if $\Al$ is cyclic, the latter can further be replaced
by $\Aut_t$.
Likewise, the presentation for $\widehat{\A}^{(4)}_2\big(\BlModT\big)$
has been reduced so
that we only have to consider non ($\AS$ and $\Aut_\xi$--trivially)
redundant $\H$--diagrams with prescribed rational fractions on pairs of vertices depending
only on the labels, which are all in a given $\Q$--basis
of $\Al$, and $2_\tinf$--legs diagrams; the $\H$--diagrams being only
subject to $\Aut_{res}$ relations rewritten in these
$\H$--diagrams; if $\Al$ is cyclic, $\Aut_{res}$ can further be replaced
by $\Aut_t$.

\section{Case when $\Al$ is of $\Q$--dimension two and cyclic}

In this section, we assume that $\Al$ is a cyclic Blanchfield module of
$\Q$--dimension two. Let $\delta=t+a+t^{-1}$ be its annihilator; note that $a\neq -2$.
Let $\gamma$ be a generator of $\Al$. Since the pairing $\bl$ is hermitian and non degenerate, we can set 
$\bl(\gamma,\gamma)=\frac{r}{\delta}\ mod\ \Qt$ with $r\in\Q^*$. Throughout this
section, we fix the basis $\omega$ to be $\{\gamma,t\gamma\}$ and we set $f(t^{\e_1}\gamma,t^{\e_2}\gamma)=t^{\e_1-\e_2}\frac r\delta$, where
$\e_1,\e_2\in\{0,1\}$. Accordingly, set $\gamma_i=\xi_i(\gamma)$ for $i=1,2,3$.

\subsection{Structure of $\A_2\left(\BlModT\right)$}
\label{secbasis}

The main results of this section are gathered in the following proposition.
\begin{proposition} \label{propbasis}
If $\BlMod$ is a cyclic Blanchfield module of $\Q$--dimension two with
annihilator $t+a+t^{-1}$, then:
\begin{enumerate}
\item\label{item:Prop1} $\A_2^{(2)}\left(\BlModT\right)\cong\widehat{\A}_2^{(2)}\left(\BlModT\right)$;
\item\label{item:Prop2} $\fract{\A_2\left(\BlModT\right)}{\A_2^{(2)}\left(\BlModT\right)}$ is freely generated by the diagrams $H_1$ and $G_1$ of Figure \ref{figbasis};
\item\label{item:Prop3} the natural map $\widehat{\A}_2^{(2)}\left(\BlModT\right)\to\widehat{\A}_2^{(4)}\left(\BlModT\right)$ is injective 
 and the corresponding quotient 
 $\fract{\widehat{\A}_2^{(4)}\left(\BlModT\right)}{\widehat{\A}_2^{(2)}\left(\BlModT\right)}$
 is freely generated by the $\H$--diagrams $H_1$ and $H_3$ given in 
 Figure \ref{figbasis}; 
\item\label{item:Prop4} if $a\neq1$, then $\A_2\left(\BlModT\right)=\A_2^{(4)}\left(\BlModT\right)\cong\widehat{\A}_2^{(4)}\left(\BlModT\right)$;
\item\label{item:Prop5} if $a=1$, then
  \begin{enumerate}[label=\roman*.]
  \item\label{item:Prop51}
    $\A_2^{(4)}\left(\BlModT\right)\varsubsetneq\A_2\left(\BlModT\right)$
    and the quotient
    $\fract{\A_2^{(4)}\left(\BlModT\right)}{\A_2^{(2)}\left(\BlModT\right)}$
    is freely generated by the $\H$--diagram $H_1$ given in Figure \ref{fig:Haches};
  \item\label{item:Prop52}
    $\A_2^{(4)}\left(\BlModT\right)\ncong\widehat{\A}_2^{(4)}\left(\BlModT\right)$.
  \end{enumerate}
\end{enumerate}
\end{proposition}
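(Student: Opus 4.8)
The overall plan is to push the reduction machinery of Section~\ref{secreduction} to its end and then settle a small, explicit linear-algebra problem, with the dichotomy $a=1$ versus $a\neq1$ emerging as a rank condition. Since $\Al$ is cyclic, Lemmas~\ref{lemma3step6legs}, \ref{lemma3step4legs} and \ref{lemmacheckrelations} present $\A_2(\BlModT)$ by $\omega$--admissible $\YY$--diagrams together with all $4_\tinf$--legs diagrams, subject only to $\AS$, $\Aut_t$ and $\bHol$ on the $\YY$--generators, and present $\widehat{\A}_2^{(4)}(\BlModT)$ by strongly $\omega$--admissible $\H$--diagrams together with all $2_\tinf$--legs diagrams, subject to $\AS$ and $\Aut_t$ on the $\H$--generators. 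First I would fix explicit essential sets $\E_6$ and $\E_4$ of $\YY$-- and $\H$--diagrams: because legs are colored in $\{\gamma_i,t\gamma_i\}$ and constrained by distributedness (resp. strong admissibility), these are short finite lists. The engine of every reduction is Corollary~\ref{cor:DV}, which for $\delta=t+a+t^{-1}$ rewrites the effect of multiplying one leg by $t$ as
\[
D_+=-aD-D_-+\sum_{v'\neq v}D_{vv'},
\]
trading a $t$--twist for the same diagram, its $t^{-1}$--twist, and strictly fewer-legged (paired) diagrams. The coefficient $-a$ is exactly where $a=1$ will single itself out, since there $1+t+t^{-1}$, hence $t^2+t+1$, annihilates $\Al$.

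For part~(3) I would work modulo $\widehat{\A}_2^{(2)}(\BlModT)$, where the relations on the $2_\tinf$--legs part are quotiented away and only $\AS$ together with the $\omega$--reduced $\Aut_t$ relations on $\E_4$ survive. Writing out these $\Aut_t$ relations (using $t\cdot t\gamma=-\gamma-a\,t\gamma$) gives a finite system whose solution space is two-dimensional for every $a$, spanned by the $\H$--diagrams $H_1$ and $H_3$ of Figure~\ref{figbasis}; since the presentation is finite and explicit, the same rank computation proves simultaneously that $H_1,H_3$ are free and that the inclusion of the $2_\tinf$--legs generators is injective, i.e. $\widehat{\A}_2^{(2)}\hookrightarrow\widehat{\A}_2^{(4)}$. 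This is part~(3), and it is insensitive to $a$.

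For part~(1) I would show that the surjection $\widehat{\A}_2^{(2)}(\BlModT)\twoheadrightarrow\A_2^{(2)}(\BlModT)$ is injective by checking that the only relations a $4_\tinf$-- or $6_\tinf$--legs diagram can impose on $\le2$--legs diagrams are consequences of the $\le2$--legs relations themselves; detection can be outsourced to the pairing map $\psi_2$, which by Theorem~\ref{th3n} (case $N=3\ge\frac{3\cdot2}{2}$) is an \emph{isomorphism} $\A_2(\BlModT)\xrightarrow{\ \cong\ }\A_2(\delta)$, so an element of $\A_2(\BlModT)$ vanishes if and only if its explicit image in $\A_2(\delta)$ does. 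With parts~(1) and~(3) in hand, parts~(2), (4) and~(5) reduce to understanding how the $\YY$--diagrams sit modulo $4$--legs. Here I would combine $\Aut_t$ (which twists all six legs) with $\bHol$ (which twists only one tripod, together with the cross-linkings) and convert the resulting linking differences into $4$--legs diagrams via $\LD$; this expresses each $\YY$--diagram modulo $\A_2^{(4)}$ through a square system whose determinant carries a factor vanishing precisely at $a=1$. When $a\neq1$ the system is invertible: every $\YY$--diagram reduces to $\le4$ legs, so $\A_2=\A_2^{(4)}$ and $G_1\in\A_2^{(4)}$; together with part~(3) this yields $\A_2^{(4)}\cong\widehat{\A}_2^{(4)}$ (part~(4)), while $\{H_1,G_1\}$ and $\{H_1,H_3\}$ span the same quotient $\A_2/\A_2^{(2)}$ (part~(2)), their independence being certified by $\psi_2$. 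When $a=1$ the system drops rank: one $\YY$--class $G_1$ no longer reduces, giving $\A_2^{(4)}\varsubsetneq\A_2$ with $\A_2/\A_2^{(2)}=\langle H_1,G_1\rangle$, while the same degenerate relation pushes the image of $H_3$ into $\A_2^{(2)}$, leaving $\A_2^{(4)}/\A_2^{(2)}=\langle H_1\rangle$ (part~(5.i)); since $H_3$ is nonzero in $\widehat{\A}_2^{(4)}$ by part~(3) but its image lands in $\A_2^{(2)}$, the surjection $\widehat{\A}_2^{(4)}\twoheadrightarrow\A_2^{(4)}$ has nontrivial kernel, which is part~(5.ii).

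The main obstacle is the $a=1$ analysis, and within it the two-sided nature of part~(5). The upper bounds (that the listed diagrams generate each quotient) are routine once the essential sets are fixed, but the freeness and non-collapse statements require genuine lower bounds. For $G_1\neq0$ in $\A_2/\A_2^{(2)}$ I would lean on the isomorphism $\psi_2$ and an explicit computation of $\psi_2(H_1)$ and $\psi_2(G_1)$ in $\A_2(\delta)$ with $\delta=t+1+t^{-1}$. The genuinely delicate point is (5.ii): I must show that $H_3$ survives in $\widehat{\A}_2^{(4)}$ (an $a$--independent presentation computation) while its image is absorbed into $\A_2^{(2)}$ in the full space (the $a=1$ rank drop of the $\YY$--to--$4$--legs system). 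Getting the bookkeeping of the $\bHol$, $\Aut_t$ and $\LD$ relations exactly right—so that the vanishing factor is correctly identified as $(1-a)$, equivalently $t^2+t+1$ acting on $\Al$, and is neither spuriously present when $a\neq1$ nor missed when $a=1$—is the crux of the argument.
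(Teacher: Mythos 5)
Your overall engine is the same as the paper's: reduce the presentation via Lemmas~\ref{lemma3step6legs}, \ref{lemma3step4legs} and \ref{lemmacheckrelations} to a short essential list of $\YY$-- and $\H$--diagrams, apply $\Aut_t$ and $\bHol$ with Corollary~\ref{cor:DV} as the rewriting engine, and observe that the resulting $6$--to--$4$--legs relation carries the factor $(1-a)$ (the paper's relation \eqref{eqR6}, $(1-a)(a+2)^2G_1=4H_3+2aH_2-2H_4-a(a+3)H_1$, obtained in Corollary~\ref{corlaststep6legs}). However, there is a genuine flaw in how you propose to certify the lower bounds. You want to detect nonvanishing of $G_1$, $H_1$, $H_3$ by computing their images under $\psi_2$ and checking them in $\A_2(\delta)$, invoking Theorem~\ref{th3n}. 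But precisely because $\psib_2:\A_2\left(\BlModT\right)\to\A_2(\delta)$ is an \emph{isomorphism}, nonvanishing in $\A_2(\delta)$ is literally the same question as nonvanishing in $\A_2\left(\BlModT\right)$; neither you nor the paper has any independent description of $\A_2(\delta)$ (indeed the whole point of Section~\ref{secstrategy} is to avoid working there), so this detection mechanism is circular. The correct—and the paper's—route is that no external certificate is needed: once the presentation has been reduced so that $G_1$ and $H_1$ (resp.\ $H_1$ and $H_3$) appear in \emph{no} remaining relation and the $2_\tinf$--legs part carries only its own usual relations, freeness of the quotient and injectivity of $\widehat{\A}_2^{(2)}\to\A_2^{(2)}$ are automatic consequences of the presentation itself. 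Your plan already contains this computation; you just need to trust it instead of outsourcing the lower bounds to $\psi_2$.

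A second, smaller inaccuracy: in the case $a=1$ you claim the degenerate relation ``pushes the image of $H_3$ into $\A_2^{(2)}$''. It does not. Combining \eqref{eqR6} at $a=1$ with the two $\Aut_t$ relations of Lemma~\ref{lemma4step4legs} ($H_1+2H_2\equiv 0$ and $H_2+H_3+H_4\equiv 0$ modulo $2_\tinf$--legs diagrams) yields $6H_3\equiv 6H_1$, i.e.\ $H_3\equiv H_1$ modulo $\A_2^{(2)}$, not $H_3\equiv 0$. The conclusion that $\fract{\A_2^{(4)}\left(\BlModT\right)}{\A_2^{(2)}\left(\BlModT\right)}$ is generated by $H_1$ alone still holds, but the element witnessing (\ref{item:Prop5}.\ref{item:Prop52}) is $H_1-H_3$ (nonzero in $\fract{\widehat{\A}_2^{(4)}}{\widehat{\A}_2^{(2)}}$ by point~(\ref{item:Prop3}), killed in $\A_2^{(4)}$), not $H_3$ itself. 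Getting this coefficient right matters downstream, since $H_1-H_3$ is exactly what feeds the kernel generator in Proposition~\ref{proptwocopies}.
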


\begin{figure}
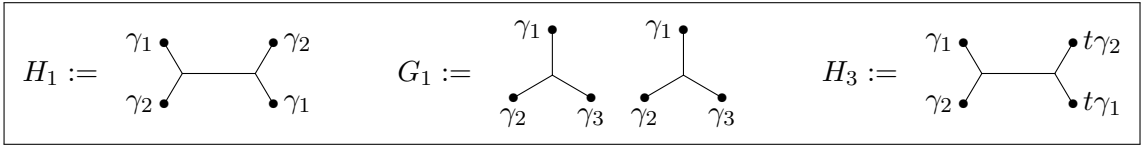

\[\fbox{
$H_1:=\fourlegs{}{}{}{}
\hspace{1cm}
G_1:=\sixlegs{}{}{}{}{}{}
\hspace{1cm}
H_3:=\fourlegs{}{}tt$}
\]
\caption{
Some generators for our diagram spaces\\
        {\footnotesize In these pictures, all edges are labelled by $1$ and the linkings are given}\\
        {\footnotesize by $f_{vw}=r/\delta$ when $v$ and $w$ are labelled by the same $\gamma_i$ and $0$ otherwise.}} 
\label{figbasis}
\end{figure}

The proof of this proposition will derive from the next results, which
resume the reduction process where it was left at the end of Section
\ref{secreduction}. In order to make the text easier, we will denote by
$\Aut^i_t$,
for any $i\in\{1,2,3\}$, the $\Aut_t$ relation applied on $\Al_i$.

\begin{lemma} \label{lemma4step6legs}
 The space $\A_2\big(\BlModT\big)$ admits the presentation with:
 \begin{itemize}
  \item as generators: the $\YY$--diagrams $D_1$, $D_2$ of Figure \ref{fig6legs1} and $G_1$, $G_2$, $G_3$, $G_4$ of Figure \ref{fig6legs2} and 
   all $4_\tinf$--legs diagrams;
  \item as relations: $\AS$, $\IHX$, $\LE$, $\Hol$, $\OR$, $\LV$, $\LD$, $\EV$ and $\Aut$
   on $4_\tinf$--legs generators and the following
   relations, where $H_1$, $H_2$, $H_3$, $H_4$ are the $\H$--diagrams given in Figure \ref{fig:Haches}:
   \[
   \left\lbrace\begin{array}{l}
                  D_1=D_2\\
                  (a+2)D_1=r(H_3-H_4)\\
                  aG_1+2G_2=rH_1\\
                  G_1+aG_2+G_4=rH_3\\
                  aG_3+2G_4=rH_4\\
                  (a+1)G_2+G_3=rH_2
                 \end{array}\right..
   \]
 \end{itemize}
\end{lemma}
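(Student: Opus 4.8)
The plan is to resume the reduction precisely where Lemma~\ref{lemma3step6legs} leaves it. For cyclic $\Al$ that lemma presents $\A_2\big(\BlModT\big)$ with generators the $\omega$--admissible distributed $\YY$--diagrams and all $4_\tinf$--legs diagrams, the only $6$--legs relations being $\AS$, $\Aut_\xi$, $\Aut_t^\omega$ and $\bHol^\omega$. Choosing a minimal essential set $\E$ of $\omega$--admissible distributed $\YY$--diagrams, Lemma~\ref{lemmacheckrelations} lets me restrict the $6$--legs generators to $\E$, discard $\AS$ and $\Aut_\xi$, and keep only $\Aut_t^\E$ and $\bHol^\E$. The remaining, and main, task is then twofold: to use the ``clean'' instances of $\Aut_t^\E$ and $\bHol^\E$ to reduce $\E$ further to the six diagrams $D_1,D_2,G_1,\dots,G_4$, and to show that the surviving relations are exactly the six displayed equalities.

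First I would classify the distributed $\YY$--diagrams. Each is a disjoint union of two tripods whose six legs split into three pairs, one pair per copy $\Al_i$, with vanishing linking across distinct pairs. As a single tripod holds only three legs, a counting argument shows that either every pair is split between the two tripods (the ``type of $G_1$'') or exactly two pairs are internal to a tripod and one is split (the ``type of $D_1,D_2$''); an internal pair must moreover carry the two distinct labels $\gamma_i$ and $t\gamma_i$, since two equal labels at one vertex make the diagram $\AS$--trivial. Listing the label assignments in $\{\gamma_i,t\gamma_i\}$ and reducing modulo $\AS$ (reordering half--edges, up to sign) and $\Aut_\xi$ (permuting the three copies) yields the minimal essential set.

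Next I would run the elimination. An automorphism $\Aut_t^i$ or a holonomy move $\bHol$ that only turns labels $\gamma\mapsto t\gamma$ requires no reduction and gives a \emph{clean} identity between two $\YY$--generators; such identities reduce $\E$ to $\{D_1,D_2,G_1,\dots,G_4\}$, the relation $D_1=D_2$ being one of them. When instead a move hits a leg already labelled $t\gamma$, I would $\omega$--reduce using $t^2\gamma=-a\,t\gamma-\gamma$, equivalently Corollary~\ref{cor:DV}: the output is $-a(\cdot)-(\cdot)+\sum_{v'\neq v}D_{vv'}$, whose $6$--legs part stays among the kept generators while the pairing terms $\sum_{v'\neq v}D_{vv'}$ produce the $4$--legs $\H$--diagrams $H_1,\dots,H_4$ of Figure~\ref{fig:Haches}. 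These are the relations carrying the $\H$--terms, and the holonomy averaging of Lemma~\ref{lemmaloop} is where the factor $a+2=\delta(1)$ of the second relation arises. Collecting everything should leave precisely the six stated equalities.

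The principal obstacle is the bookkeeping of this elimination. Because $\Aut_t^i$ acts on every leg coloured in $\Al_i$---possibly in both tripods---while $\bHol$ acts on all legs of a single tripod across several copies, each relation expands into several admissible $6$--legs diagrams together with $4$--legs corrections. The delicate points are to confirm that these corrections land on exactly $H_1,\dots,H_4$ with the coefficients shown, that the chain of clean identities genuinely collapses the essential set to six generators, and that no further independent relation survives---so that the six equalities are complete. Throughout, one must keep the linkings $f_{vv'}$ coherent so every intermediate diagram remains admissible; this is routine but easy to mishandle.
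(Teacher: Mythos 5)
Your overall strategy coincides with the paper's: start from Lemmas \ref{lemma3step6legs} and \ref{lemmacheckrelations}, restrict to a minimal essential set of admissible $\YY$--diagrams (your classification into the two combinatorial types---all three pairs split between the tripods, versus two internal pairs and one split---is correct and matches Figures \ref{fig6legs1} and \ref{fig6legs2}), absorb the ``clean'' $\Aut_t$ and $\bHol$ identities into the definition of the six generators, and generate the remaining relations by $\omega$--reducing the non-clean moves with Corollary \ref{cor:DV}, the pairing terms producing the $\H$--diagrams.

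There is nevertheless a gap: the actual content of the lemma is the precise list of six relations, and the proposal derives none of them---it only asserts that ``collecting everything should leave precisely the six stated equalities'' and relegates the verification to the ``delicate points''. Moreover, the one mechanism you do commit to is misattributed: the factor $a+2$ in $(a+2)D_1=r(H_3-H_4)$ does not come from the averaging trick of Lemma \ref{lemmaloop}, which applies only to lollipop components ($D_1$ has none, and $\Aut_t$ and $\bHol$ do not fix $D_1$, so no such averaging is available). In the paper it arises from applying $\Aut^1_t$ to $D_2$, expanding the resulting $t^2\gamma_1$ label via Corollary \ref{cor:DV}, and recognizing the two surviving $6$--legs terms as (forms of) $D_1$ and $D_2$; combined with the previously obtained relation $D_1=D_2$, this yields $D_1=-aD_1-D_1+r(\cdots)$, hence $(a+2)D_1=r(\cdots)$, and an $\IHX$ relation (plus $\Aut_\xi$) is still needed to rewrite the $4$--legs remainder, which is not strongly admissible, as $H_3-H_4$. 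Completing the proof requires carrying out this kind of computation for every $\Aut_t$ and $\bHol$ relation applied to every listed form of the six generators, and checking that no further independent relation survives; this case-by-case verification is the substance of the paper's argument and is absent from the proposal.
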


\begin{figure}
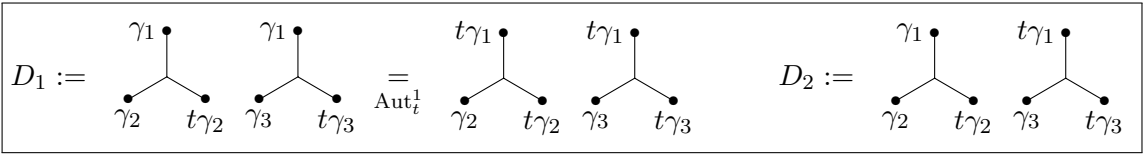
 
  \[\fbox{$
  D_1:=\sixlegsbis{}{}t{}{}t\underset{\Aut^1_t}{=}\sixlegsbis t{}tt{}t
  \hspace{1cm}
  D_2:=\sixlegsbis{}{}tt{}t$}
  \]
  \caption{First family of 6--legs generators} \label{fig6legs1}
 \end{figure}
 \begin{figure}
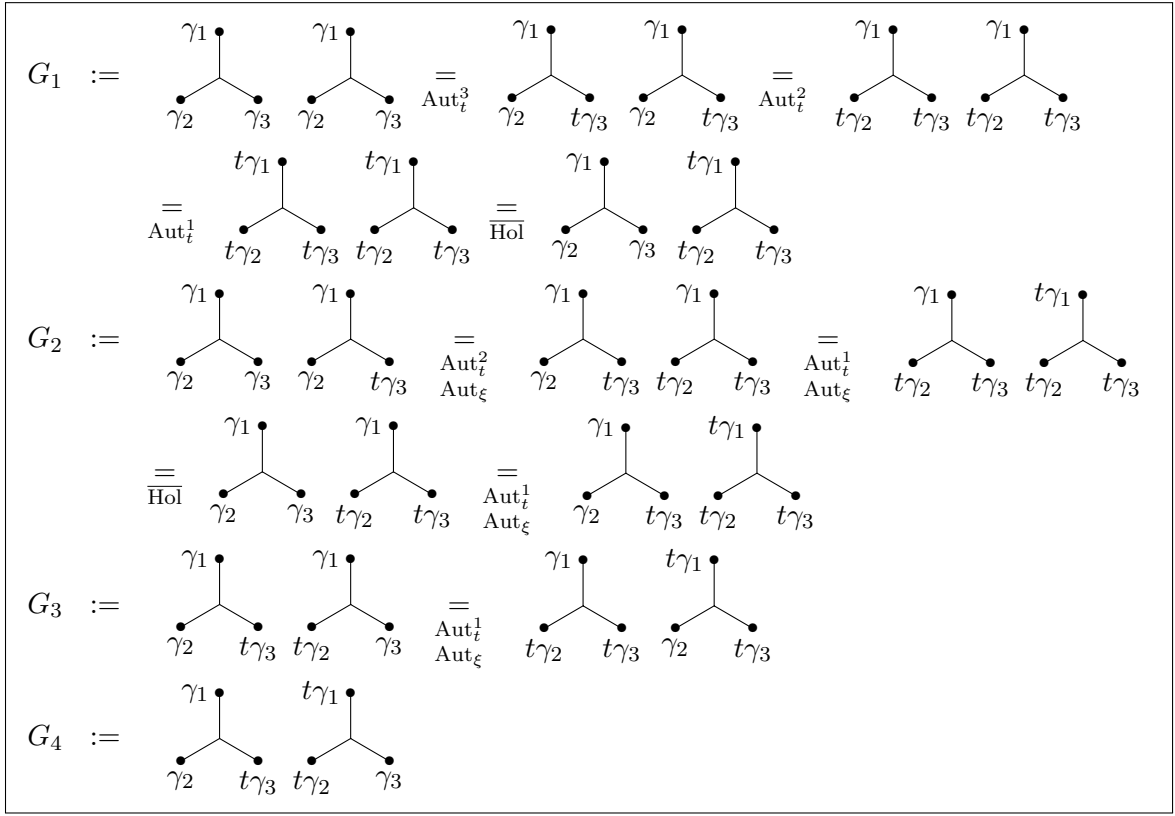

   \[\fbox{$
  \begin{array}{rcl}
  G_1&:=&\sixlegs{}{}{}{}{}{}
    \underset{\Aut^3_t}{=}\sixlegs{}{}{t}{}{}{t}
    \underset{\Aut^2_t}{=}\sixlegs{}{t}{t}{}{t}{t}\\
    &&
    \underset{\Aut^1_t}{=}\sixlegs{t}{t}{t}{t}{t}{t}\underset{\bHol}{=}\sixlegs{}{}{}{t}{t}{t}\\
  G_2&:=&\sixlegs{}{}{}{}{}{t}
          \underset{\begingroup\scriptsize
          \begin{array}{c}\Aut^2_t\\\Aut_\xi\end{array}\endgroup}{=}\sixlegs{}{}{t}{}{t}{t}
          \underset{\begingroup\scriptsize
          \begin{array}{c}\Aut^1_t\\\Aut_\xi\end{array}\endgroup}{=}\sixlegs{}{t}{t}{t}{t}{t}\\
     &&\underset{\bHol}{=}\sixlegs{}{}{}{}{t}{t} 
        \underset{\begingroup\scriptsize
          \begin{array}{c}\Aut^1_t\\\Aut_\xi\end{array}\endgroup}{=}\sixlegs{}{}{t}{t}{t}{t}\\
    G_3&:=&\sixlegs{}{}{t}{}{t}{}
            \underset{\begingroup\scriptsize
          \begin{array}{c}\Aut^1_t\\\Aut_\xi\end{array}\endgroup}{=}\sixlegs{}{t}{t}{t}{}{t}\\
  G_4&:=&\sixlegs{}{}{t}{t}{t}{}
  \end{array}$}\]
 \caption{Second family of 6--legs generators} \label{fig6legs2}
 \end{figure}
\begin{figure}
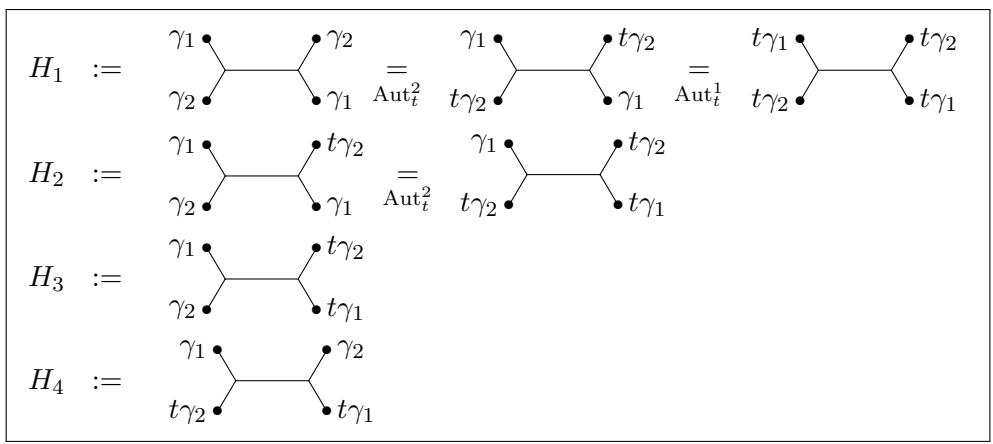

\[\fbox{$ 
 \begin{array}{rcl}
  H_1 &:=& \fourlegs{}{}{}{}\underset{\Aut^2_t}{=}\fourlegs{}t{}t\underset{\Aut^1_t}{=}\fourlegs{t}ttt \\
  H_2 &:=& \fourlegs{}{}{}t\underset{\Aut^2_t}{=}\fourlegs{}ttt \\
  H_3 &:=& \fourlegs{}{}tt \\
  H_4 &:=& \fourlegs{}tt{}
 \end{array}$}
\]
 \caption{Family of 4--legs generators} \label{fig:Haches}
\end{figure}

\begin{proof} 
 Thanks to Lemmas \ref{lemma3step6legs} and \ref{lemmacheckrelations}, we only have to check 
 that the relations $\bHol$ and $\Aut_t$ applied to the
 admissible diagrams of Figures \ref{fig6legs1} and
 \ref{fig6legs2} give exactly the six new relations.
 
 We begin with the first family. Applying $\Aut^2_t$ to
 $D_1$, we obtain:
 \[
 \sixlegsbis{}{}t{}{}t=\sixlegsbis{}t{t^2}{}{}t.
 \]
 By Corollary \ref{cor:DV}, we have:
 \[
 \sixlegsbis{}t{t^2}{}{}t=-a\sixlegsbis{}tt{}{}t-\sixlegsbis{}t{}{}{}t+r\fourlegsLoop.
 \]
 In this equality, the second and fourth diagrams are trivial by $\AS$ and we get $D_1=D_1$. Application of $\Aut^3_t$ to $D_1$ is similar 
 and gives the same result. Now, applying the $\bHol$ relation to $D_1$, we obtain:
 \[
 \sixlegsbis{}{}t{}{}t=\sixlegsbis tt{t^2}{}{}t.
 \]
 Applying Corollary \ref{cor:DV} as previously, we get $D_1=D_2$.  One can check that
 applying $\bHol$ and $\Aut_t$ to the second form of $D_1$ does not give any additional relation. 

 We now have to apply the same relations to $D_2$. Applying $\Aut^1_t$ to $D_2$ gives:
 \[
 \sixlegsbis{}{}tt{}t=\sixlegsbis t{}t{t^2}{}t.
 \]
 Once again we use Corollary \ref{cor:DV} to get:
 \[
 \sixlegsbis t{}t{t^2}{}t=-a\sixlegsbis t{}tt{}t-\sixlegsbis t{}t{}{}t+r\fourlegsBas,
 \]
 and finally:
 \[
 D_1=\frac{r}{a+2}\fourlegsop{\gamma_2}{t\gamma_2}{\gamma_3}{t\gamma_3}{0.12}.
 \]
 One can check that applying the other $\Aut_t$ or the $\bHol$ relations to $D_2$ does not give any additional relation. 

 We turn to the second family of 6--legs generators. 
 Applying $\Aut^3_t$ to $G_2$ gives:
 \[
 \sixlegs{}{}{}{}{}t=\sixlegs{}{}t{}{}{t^2},
 \]
 and by Corollary \ref{cor:DV}, we have:
 \[
 \sixlegs{}{}t{}{}{t^2}=-\,a\sixlegs{}{}t{}{}t-\sixlegs{}{}t{}{}{}+r\fourlegsRevers,
 \]
 so we get the relation:
 \[
 aG_1+2G_2=r\fourlegs{}{}{}{}.
 \]
 Application of $\bHol$ gives:
 \[
 \sixlegs{}{}t{}{}t=\sixlegs{}{}ttt{t^2},
 \]
 which, developed with Corollary \ref{cor:DV}, gives:
 \[
 G_1+aG_2+G_4=r\fourlegs{}{}tt.
 \]
 By $\Aut^1_t$ and $\Aut^2_t$ respectively, we get:
 \[
 \sixlegs{}{}ttt{}=\sixlegs{t}{}t{t^2}t{}
 \]
 and
 \[
 \sixlegs{}{}t{}t{}=\sixlegs{}tt{}{t^2}{},
 \]
 which, using Corollary \ref{cor:DV}, provides respectively:
 \[
 aG_3+2G_4=r\fourlegs{}tt{}\hspace{.8cm}\textrm{and}\hspace{.8cm}(a+1)G_2+G_3=r\fourlegs{t}{}{}{}.
 \]
  One can check that the other relations $\Aut_t$ and $\bHol$ applied
  to the different given forms of the $G_i$'s do not provide further
  relations.
\end{proof}

\begin{corollary} \label{corlaststep6legs}
 The space $\A_2\big(\BlModT\big)$ admits the presentation with:
 \begin{itemize}
  \item as generators: the diagram $G_1$ given in Figure \ref{figbasis} and $4_\tinf$--legs diagrams;
  \item as relations: $\AS$, $\IHX$, $\LE$, $\Hol$, $\OR$, $\LV$, $\LD$, $\EV$ and $\Aut$ on $4_\tinf$--legs generators 
   and the following relation between $G_1$ and the $\H$--diagrams given in Figure \ref{fig:Haches}:
   \begin{equation} \tag{$R_6$}
    (1-a)(a+2)^2G_1=4H_3+2aH_2-2H_4-a(a+3)H_1.
    \label{eqR6}
   \end{equation}
 \end{itemize}
\end{corollary}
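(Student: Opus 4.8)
The plan is to start from the presentation of $\A_2\big(\BlModT\big)$ given in Lemma~\ref{lemma4step6legs} and perform a sequence of substitutions that eliminate every six--legs generator except $G_1$. Each of the five generators $D_1,D_2,G_2,G_3,G_4$ will be removed by a relation that expresses it $\Q$--linearly in terms of the remaining generators; this is the standard substitution (Tietze) move for presentations of modules, and it uses up five of the six listed relations. The $4_\tinf$--legs generators, and all the relations $\AS,\IHX,\LE,\Hol,\OR,\LV,\LD,\EV,\Aut$ imposed on them, are kept untouched throughout; the surviving sixth relation will be exactly \eqref{eqR6}.

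First I would dispose of the pair $D_1,D_2$. Since $\delta=t+a+t^{-1}$ satisfies $\delta(1)\neq 0$, we have $a\neq-2$, so the relation $(a+2)D_1=r(H_3-H_4)$ can be solved as $D_1=\tfrac{r}{a+2}(H_3-H_4)$, which expresses $D_1$ as a combination of the $4_\tinf$--legs generators $H_3,H_4$. This removes $D_1$ and consumes that relation; substituting into $D_1=D_2$ then gives $D_2=\tfrac{r}{a+2}(H_3-H_4)$, removing $D_2$ as well. Since neither $D_1$ nor $D_2$ appears in any of the remaining four relations, nothing else is affected.

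Next I would eliminate $G_2,G_3,G_4$ using three of the four relations
\[
 aG_1+2G_2=rH_1,\qquad G_1+aG_2+G_4=rH_3,\qquad aG_3+2G_4=rH_4,\qquad (a+1)G_2+G_3=rH_2.
\]
The first relation yields $G_2=\tfrac12(rH_1-aG_1)$; the fourth then yields $G_3=rH_2-(a+1)G_2$; and the second yields $G_4=rH_3-G_1-aG_2$. After back--substituting the formula for $G_2$, each of these expresses $G_2$, $G_3$, or $G_4$ as a $\Q$--linear combination of $G_1$ and of $4_\tinf$--legs diagrams, so all three are removed as generators, consuming the first, fourth and second relations.

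It then remains only to substitute these expressions into the single relation not yet used, namely $aG_3+2G_4=rH_4$. This is a routine linear computation; its one delicate point is the coefficient of $G_1$, which after substitution equals $\tfrac{a^2(a+3)}{2}-2=\tfrac{a^3+3a^2-4}{2}$, and the factorization $a^3+3a^2-4=(a-1)(a+2)^2$ is precisely what produces the factor $(1-a)(a+2)^2$ on the left of \eqref{eqR6}. Collecting the $H_i$--terms yields \eqref{eqR6}, which is the only relation on $G_1$ that survives. The main obstacle is thus not conceptual but this final coefficient bookkeeping: one must track the substitutions carefully enough to recognize the factorization above; every other step is a direct application of the substitution move.
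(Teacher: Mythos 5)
Your elimination is exactly the (unwritten) argument the paper intends for Corollary~\ref{corlaststep6legs}: starting from Lemma~\ref{lemma4step6legs}, one Tietze-eliminates $D_1,D_2$ (using $a\neq-2$) and then $G_2,G_3,G_4$ via the third, sixth and fourth relations, and substitutes into $aG_3+2G_4=rH_4$; your coefficient bookkeeping, including the factorization $a^3+3a^2-4=(a-1)(a+2)^2$, is correct. The only discrepancy is that the substitution actually yields $(1-a)(a+2)^2G_1=r\big(4H_3+2aH_2-2H_4-a(a+3)H_1\big)$, i.e.\ the right-hand side of \eqref{eqR6} should carry an overall factor $r$ which the paper omits; since $r\neq 0$ this is immaterial for the presentation and for everything downstream.
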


Now, we turn our attention to 4--legs generators.

\begin{lemma} \label{lemma4step4legs}
 The space $\widehat{\A}^{(4)}_2\big(\BlModT\big)$ admits the presentation with:
\begin{itemize}
  \item as generators: the $\H$--diagrams $H_1$, $H_2$, $H_3$, $H_4$ given in Figure \ref{fig:Haches} and $2_\tinf$--legs diagrams;
  \item as relations: $\AS$, $\IHX$, $\LE$, $\Hol$, $\OR$, $\LV$, $\LD$, $\EV$ and $\Aut$ on $2_\tinf$--legs generators and the following two relations:
  \begin{eqnarray*}
  aH_1+2H_2&=&-r\twolegs{}{}\\
  aH_2+H_3+H_4&=&-r\twolegs{}t.
 \end{eqnarray*}
 \end{itemize}
\end{lemma}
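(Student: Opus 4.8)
The plan is to run, one leg-level lower, the same reduction that produced the six-legs relations in Lemma~\ref{lemma4step6legs}. I would start from the presentation of $\widehat{\A}^{(4)}_2\big(\BlModT\big)$ furnished by Lemmas~\ref{lemma3step4legs} and~\ref{lemmacheckrelations}: the four-legs generators are strongly $\omega$--admissible $\H$--diagrams, the only relations bearing on them are $\AS$, $\Aut_\xi$ and $\Aut_t^\omega$ (we are in the cyclic case), while the $2_\tinf$--legs generators keep all their relations. By Lemma~\ref{lemmacheckrelations} it then suffices to pin down an essential family of strongly admissible $\H$--diagrams and to compute the relations that $\Aut_t$ imposes on it; note that, in contrast with the six-legs case, no $\bHol$ relation intervenes here, since $\IHX$ has already been spent in passing to strongly admissible form.

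First I would enumerate the strongly admissible $\H$--diagrams. Each trivalent vertex carries one leg in $\Al_1$ and one in $\Al_2$, each labelled in $\{\gamma_i,t\gamma_i\}$, so after using $\AS$ to place the $\Al_1$--leg in a fixed corner of each vertex the diagram is encoded by a quadruple $(a,b,c,d)\in\{0,1\}^4$, giving sixteen diagrams. I would reduce these to $H_1,H_2,H_3,H_4$ of Figure~\ref{fig:Haches} using the identifications that cost nothing: the horizontal symmetry of the graph $\FourLegsPattern$, which after restoring vertex orientations by two $\AS$ moves identifies $(a,b,c,d)$ with $(c,d,a,b)$; the permutation relation $\Aut_\xi$, identifying $(a,b,c,d)$ with $(b,a,d,c)$; and the ``clean'' instances of $\Aut_t$, namely those for which multiplication by $t$ creates no $t^2$--label and is thus a mere relabelling. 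These are exactly the identities recorded in Figure~\ref{fig:Haches}, and together they show that $\{H_1,H_2,H_3,H_4\}$ is an essential family.

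The substance of the proof is the computation of the remaining, non-clean, $\Aut_t$ relations through Corollary~\ref{cor:DV}. Applying $\Aut_t^i$ to a generator whose $i$--th copy already carries a label $t\gamma_i$ produces a leg labelled $t^2\gamma_i$; Corollary~\ref{cor:DV} rewrites such a diagram $D_+$ as $-aD-D_-+\sum_{v'\neq v}D_{vv'}$, where $D$ and $D_-$ lower the offending power of $t$ and each pairing $D_{vv'}$ of the doubled leg with the other same-copy leg (the cross-copy pairings being trivial) collapses into a bigon, that is a $2_\tinf$--legs diagram, carrying the factor $r$ from the linking $r/\delta$. I expect $\Aut_t^2$ applied to $H_2$ to yield the first relation: its $t$-- and $t^{-1}$--reductions fall into the classes of $H_1$ and of $H_2$ respectively (the latter via the horizontal symmetry), so $H_2$ reappears with multiplicity, giving $aH_1+2H_2=-r\,\twolegs{}{}$; and $\Aut_t^2$ applied to $H_4$ (equivalently $\Aut_t^1$ to $H_3$) should give $aH_2+H_3+H_4=-r\,\twolegs{}t$. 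I would then check, as in the proof of Lemma~\ref{lemma4step6legs}, that every other application of $\Aut_t$ to the various admissible forms of $H_1,\dots,H_4$ reproduces one of these two relations or a tautology, so that nothing further survives.

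The main obstacle is precisely this last bookkeeping. The delicate points are: not to overlook the horizontal symmetry of $\FourLegsPattern$, without which the sixteen diagrams appear to split into strictly more than four classes and the coefficient $2$ in the first relation---which comes exactly from the $t^{-1}$--reduced term $D_-$ falling back into the class of $H_2$---is lost; keeping track of the $\AS$ signs incurred each time a diagram is returned to its chosen normal form; and identifying correctly, with the right power of $t$ and the right multiple of $r$, the bigon $2_\tinf$--legs diagram produced by each pairing in Corollary~\ref{cor:DV}. Once these are controlled, Lemma~\ref{lemmacheckrelations} repackages the outcome into the stated presentation.
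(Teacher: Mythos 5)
Your proposal is correct and takes essentially the same route as the paper, whose entire proof consists of invoking Lemmas \ref{lemma3step4legs} and \ref{lemmacheckrelations} and then declaring the $\Aut_t$ computation on the diagrams of Figure \ref{fig:Haches} ``straightforward''; you have simply carried out that computation, and your identification of which applications of $\Aut_t$ produce the two relations (via Corollary \ref{cor:DV}, with the horizontal symmetry accounting for the coefficient $2$) matches what the paper leaves implicit. The only small inaccuracy is your stated reason for the absence of a $\bHol$ relation on $4$--legs generators: it is not that $\IHX$ has been spent, but that $\Hol$ was already absorbed in Lemma \ref{lemma2step4legs}, where the residual ambiguity in removing the power of $t$ from the central edge is exactly the global multiplication-by-$t$ automorphism, hence part of $\Aut$.
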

\begin{proof} 
 Thanks to Lemmas \ref{lemma3step4legs} and \ref{lemmacheckrelations}, we only have to check that $\Aut_t$ applied to the diagrams of Figure \ref{fig:Haches} 
 provides exactly the above two relations. This is straightforward. 
\end{proof}

\begin{corollary} \label{corlaststep4legs}
 The space $\widehat{\A}^{(4)}_2\big(\BlModT\big)$ admits the presentation with:
 \begin{itemize}
  \item as generators: the $\H$--diagrams $H_1$ and $H_3$ given in Figure \ref{fig:Haches} and $2_\tinf$--legs diagrams;
  \item as relations: $\AS$, $\IHX$, $\LE$, $\Hol$, $\OR$,
    $\LV$, $\LD$, $\EV$ and $\Aut$ on $2_\tinf$--legs generators.
 \end{itemize}
\end{corollary}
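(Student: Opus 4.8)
The plan is to obtain this presentation from the one of Lemma~\ref{lemma4step4legs} by eliminating the two generators $H_2$ and $H_4$, using the two extra relations displayed there to solve for them in terms of $H_1$, $H_3$ and $2_\tinf$--legs diagrams. Since these spaces are $\Q$--vector spaces given by generators and relations, this is a purely linear (Tietze / Gaussian) elimination, and the whole point is that the two relations to be consumed are \emph{triangular} in $H_2$ and $H_4$.

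First I would record the crucial structural observation: in the presentation of Lemma~\ref{lemma4step4legs}, the only relations that involve the $\H$--diagram generators $H_1,\dots,H_4$ are the two displayed ones, because all of $\AS,\IHX,\LE,\Hol,\OR,\LV,\LD,\EV,\Aut$ are imposed on $2_\tinf$--legs generators and hence involve $2_\tinf$--legs diagrams only. This is exactly what legitimizes deleting $H_2$ and $H_4$ without secretly creating a hidden relation among the survivors.

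Next, working over $\Q$ so that division by $2$ is allowed, the first relation $aH_1+2H_2=-r\,\twolegs{}{}$ solves for $H_2$:
\[
H_2=-\tfrac{a}{2}H_1-\tfrac{r}{2}\,\twolegs{}{}\,,
\]
expressing $H_2$ as a $\Q$--linear combination of $H_1$ and a $2_\tinf$--legs diagram. A Tietze move then removes the generator $H_2$ together with this relation, substituting the right-hand side wherever $H_2$ occurs, which by the first paragraph is only in the second relation. Substituting into $aH_2+H_3+H_4=-r\,\twolegs{}t$ gives
\[
H_4=\tfrac{a^2}{2}H_1+\tfrac{ar}{2}\,\twolegs{}{}-H_3-r\,\twolegs{}t\,,
\]
so $H_4$ is likewise a combination of $H_1$, $H_3$ and $2_\tinf$--legs diagrams; a second Tietze move deletes $H_4$ and this last relation. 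What remains is precisely the asserted presentation: generators $H_1$, $H_3$ and $2_\tinf$--legs diagrams, subject only to $\AS,\IHX,\LE,\Hol,\OR,\LV,\LD,\EV,\Aut$ on $2_\tinf$--legs generators.

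I do not expect any genuine obstacle: the corollary is routine elimination, and no constraint on $a$ is needed since we only ever divide by $2$. The single point deserving care — and the one I would emphasize in writing — is the verification that $H_2$ and $H_4$ appear in no relation beyond the two being consumed, so that their removal does not impose a spurious relation between $H_1$ and $H_3$; this is guaranteed by the fact that the remaining relations of Lemma~\ref{lemma4step4legs} are stated on $2_\tinf$--legs generators alone.
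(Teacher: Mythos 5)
Your proposal is correct and is exactly the (implicit) argument behind the corollary: the paper states it without proof because it is the routine elimination of $H_2$ and $H_4$ from the presentation of Lemma~\ref{lemma4step4legs} using the two displayed relations, which are triangular in those generators and are the only relations involving any $\H$--diagram. Your emphasis on that last point — that the remaining relations live entirely on $2_\tinf$--legs generators, so no spurious relation between $H_1$ and $H_3$ is created — is precisely the right thing to check, and your arithmetic (dividing only by $2$, so no condition on $a$) is correct.
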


\begin{proof}[Proof of Proposition \ref{propbasis}]
 Thanks to Corollaries \ref{corlaststep6legs} and
 \ref{corlaststep4legs}, $\A_2\big(\BlModT\big)$ has a presentation
 given by the generators $G_1$, $H_1$, $H_3$ and 
 all $2_\tinf$--legs diagrams, and the relation \eqref{eqR6} and all usual relations on $2_\tinf$--legs diagrams. Using \eqref{eqR6} to write $H_3$ in terms 
 of the other generators, we obtain a presentation with, as
 generators, $G_1$, $H_1$ and $2_\tinf$--legs diagrams and, as
 relations, the usual relations on $2_\tinf$--legs diagrams.
 This concludes the first two points of the proposition.
  The third point is given by Corollary \ref{corlaststep4legs}.
 
 If $a\neq1$, in the presentation of $\A_2\big(\BlModT\big)$ given in Corollary \ref{corlaststep6legs}, one can remove the generator $G_1$ and the relation \eqref{eqR6}. 
 This implies the fourth point of the proposition. 

 If $a=1$, in the presentation of $\A_2\big(\BlModT\big)$ given
 in Corollary \ref{corlaststep6legs}, $G_1$ is not subject to any
 relation. On the other hand, compared with Lemma
 \ref{lemma4step4legs}, \eqref{eqR6} provides then a third relation
 between the $H_i$'s which holds in
 $\A_2\big(\BlModT\big)$ but not in $\widehat{\A}^{(4)}_2\big(\BlModT\big)$. This new
 relation can be used to show that $H_1$ and $H_3$ are equal up to diagrams with fewer legs. This concludes the fifth point of the proposition.
\end{proof}

\subsection{On the maps $\iota_2$}

The main goal of this section is to determine the injectivity and
surjectivity status of the maps $\iota_2^1:\A_2\BlMod\to\A_2\left(\BlModT\right)$ and $\iota_2^2:\A_2\big(\BlModD\big)\to\A_2\left(\BlModT\right)$ when $\Al$
is of $\Q$--dimension two and cyclic.
It is a direct consequence of Corollary \ref{corinjectivity} and
Proposition \ref{propbasis} that:
\begin{proposition}
 If $\BlMod$ is a cyclic Blanchfield module of $\Q$--dimension $2$ with annihilator different from $t+1+t^{-1}$, then the maps $\iota_2^1$ and $\iota_2^2$ 
 are injective. 
\end{proposition}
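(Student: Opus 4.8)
The plan is to apply Corollary \ref{corinjectivity} with $n=2$ and $N=3$, noting that $N=3\geq\frac{3\cdot 2}{2}=3$, and to supply the required isomorphisms from Proposition \ref{propbasis}. Write the annihilator as $\delta=t+a+t^{-1}$; the hypothesis $\delta\neq t+1+t^{-1}$ means precisely $a\neq1$, which is exactly the regime covered by item (\ref{item:Prop4}) of Proposition \ref{propbasis}. Recall also that, by the reduction in Section \ref{secstrategy}, injectivity of $\iota_2^1$ and $\iota_2^2$ is all that needs to be established.

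For the map $\iota_2^1$ I would take $\ell=1$. Then Corollary \ref{corinjectivity} requires $\widehat{\A}_2^{(2k)}\big(\BlModT\big)\cong\A_2^{(2k)}\big(\BlModT\big)$ for each $k\in\{1,2,3\}$. The case $k=1$ is item (\ref{item:Prop1}); the case $k=2$ is item (\ref{item:Prop4}), valid since $a\neq1$; and the case $k=3$ is automatic, because $\widehat{\A}_2^{(6)}\big(\BlModT\big)=\A_2\big(\BlModT\big)=\A_2^{(6)}\big(\BlModT\big)$. With all three isomorphisms available, Corollary \ref{corinjectivity} yields that $\iota_2^1$ is injective.

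For the map $\iota_2^2$ I would take $\ell=2$. The same corollary now only demands the isomorphisms for $k\in\{2,3\}$, both of which were just verified in the previous step. Hence $\iota_2^2$ is injective as well, and the proposition follows.

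The argument itself is just a bookkeeping of hypotheses, so there is no real obstacle at this stage; the genuine difficulty is hidden upstream. The entire restriction on $\delta$ comes from item (\ref{item:Prop4}) of Proposition \ref{propbasis}, guaranteeing $\widehat{\A}_2^{(4)}\big(\BlModT\big)\cong\A_2^{(4)}\big(\BlModT\big)$, which fails exactly when $a=1$. This is precisely why the present statement excludes $\delta=t+1+t^{-1}$, and why that case must be handled separately: by item (\ref{item:Prop5}) the $4_\tinf$--legs comparison map is then no longer an isomorphism, which is what ultimately produces the nontrivial kernel of $\psi_2$.
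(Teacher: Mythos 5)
Your proof is correct and follows exactly the paper's route: the paper derives this proposition as a direct consequence of Corollary \ref{corinjectivity} and Proposition \ref{propbasis}, using items (\ref{item:Prop1}) and (\ref{item:Prop4}) to supply the isomorphisms $\widehat{\A}_2^{(2k)}\big(\BlModT\big)\cong\A_2^{(2k)}\big(\BlModT\big)$ for the relevant $k$, with the $k=3$ case being automatic. Your bookkeeping of the hypotheses (and the observation that the exclusion of $\delta=t+1+t^{-1}$ comes precisely from item (\ref{item:Prop4})) matches the intended argument.
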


It remains to deal with injectivity when $\delta=t+1+t^{-1}$ and to determine the surjectivity status of the maps $\iota_2$. We start with $\iota_2^1$.

\begin{proposition} \label{proponecopy}
 Let $\BlMod$ be a cyclic Blanchfield module of $\Q$-dimension two. Then the map $\iota_2^1$ is injective but not surjective.
\end{proposition}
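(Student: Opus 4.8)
The plan is to show that, when $\BlMod$ is cyclic of $\Q$--dimension two, the whole space $\A_2\BlMod$ collapses onto its $2$--legs part, so that $\iota_2^1$ is nothing but the inclusion of $\A_2^{(2)}(\BlModT)$ into $\A_2(\BlModT)$; both assertions then follow at once from Proposition~\ref{propbasis}. The previous proposition already gives injectivity when $\delta\neq t+1+t^{-1}$, but the argument below is uniform in $a$ and simultaneously settles the surjectivity question and the remaining injectivity case $\delta=t+1+t^{-1}$. I would work along the leg filtration, treating the $6$--, $4$-- and $2$--legs pieces in turn.

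First I would dispose of the $6$--legs generators. By Lemma~\ref{lemma3step6legs}, $\A_2\BlMod$ is generated by $\omega$--admissible $\YY$--diagrams together with $4_\tinf$--legs diagrams, and $\omega=\{\gamma,t\gamma\}$ has only two elements. Since each tripod of a $\YY$--diagram has three legs but only two labels are available, every tripod repeats a label and is killed by $\AS$; hence $\A_2\BlMod=\A_2^{(4)}\BlMod$. Next, by Lemma~\ref{lemma2step4legs} together with the $\omega$--reduction, $\widehat\A_2^{(4)}\BlMod$ is generated by $\omega$--admissible $\H$--diagrams and $2_\tinf$--legs diagrams; by $\AS$ such an $\H$--diagram vanishes unless its two legs at each trivalent vertex carry the two distinct labels $\gamma$ and $t\gamma$, and up to $\AS$ there is a single such diagram, say $K$. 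Thus $\widehat\A_2^{(4)}\BlMod/\widetilde\A_2^{(2)}\BlMod$ is spanned by the class of $K$.

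The crux is to prove that this class is trivial, and I expect this to be the main obstacle. Here I would pass to three copies. All leg-labels of $\widehat\iota_2(K)$ lie in the single summand $\Al_1$, whereas the free generators $H_1,H_3$ of $\widehat\A_2^{(4)}(\BlModT)/\widehat\A_2^{(2)}(\BlModT)$ (Proposition~\ref{propbasis}\,(\ref{item:Prop3})) genuinely involve two summands. Every defining relation of $\widehat\A_2^{(4)}(\BlModT)$ either preserves the number of summands of $\BlModT$ supporting a leg-label---this includes the permutation relations $\Aut_\xi$, which merely permute the summands---or, in the case of $\LD$, lowers the number of legs; hence that quotient is graded by this number and, being $\Q H_1\oplus\Q H_3$, is concentrated in summand-number two, so the class of the one-summand diagram $\widehat\iota_2(K)$ vanishes in it. By Lemma~\ref{lemmaiotaquotients} the induced map $\widehat\A_2^{(4)}\BlMod/\widetilde\A_2^{(2)}\BlMod\to\widehat\A_2^{(4)}(\BlModT)/\widetilde\A_2^{(2)}(\BlModT)$ is injective, so the class of $K$ itself is zero, i.e. $K\in\widetilde\A_2^{(2)}\BlMod$. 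Therefore $\widehat\A_2^{(4)}\BlMod=\widetilde\A_2^{(2)}\BlMod$, whence $\A_2^{(4)}\BlMod=\A_2^{(2)}\BlMod$ and, with the first step, $\A_2\BlMod=\A_2^{(2)}\BlMod$. The delicate point is organising the summand-counting into an honest grading on the quotient so that Lemma~\ref{lemmaiotaquotients} can be leveraged; reducing $K$ by hand instead---applying $\Aut_t$ and Corollary~\ref{cor:DV} to the two $t^2\gamma$ labels it produces---is possible but computationally heavier.

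It then remains to read off the two conclusions. By Corollary~\ref{corpresentation} the map $\widehat\iota_2\colon\widehat\A_2^{(2)}\BlMod\to\widehat\A_2^{(2)}(\BlModT)$ is an isomorphism, and by Proposition~\ref{propbasis}\,(\ref{item:Prop1}) so is the surjection $\widehat\A_2^{(2)}(\BlModT)\to\A_2^{(2)}(\BlModT)$; their composite factors as $\widehat\A_2^{(2)}\BlMod\twoheadrightarrow\A_2^{(2)}\BlMod\xrightarrow{\ \iota_2^1\ }\A_2^{(2)}(\BlModT)$, which forces $\A_2^{(2)}\BlMod\to\A_2^{(2)}(\BlModT)$ to be an isomorphism. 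Since $\A_2\BlMod=\A_2^{(2)}\BlMod$, the map $\iota_2^1$ is injective, with image exactly $\A_2^{(2)}(\BlModT)$. Finally, $\A_2^{(2)}(\BlModT)$ is a proper subspace of $\A_2(\BlModT)$, because the quotient is freely generated by $H_1$ and $G_1$ (Proposition~\ref{propbasis}\,(\ref{item:Prop2})) and is in particular nonzero; hence $\iota_2^1$ is not surjective.
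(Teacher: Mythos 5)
Your reduction of the generators is fine up to the point where you identify the single admissible $\H$--diagram $K=\fourlegsop{\gamma}{t\gamma}{\gamma}{t\gamma}{0.12}$ as the generator of $\fract{\A_2\BlMod}{\A_2^{(2)}\BlMod}$ (this is the diagram the paper calls $G$). But the crux of your argument --- that the class of $\widehat{\iota}_2(K)$ vanishes in $\fract{\widehat{\A}_2^{(4)}\left(\BlModT\right)}{\widehat{\A}_2^{(2)}\left(\BlModT\right)}$ because that quotient is ``graded by the number of summands supporting a leg-label'' --- is false, and the conclusion $\A_2\BlMod=\A_2^{(2)}\BlMod$ that it leads to is wrong. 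No such grading exists: the defining relations of $\widehat{\A}_2^{(4)}\left(\BlModT\right)$ include the $\Aut$ relations for \emph{all} automorphisms of $\BlModT$, and these mix the summands (for instance the automorphisms of type $\lambda_{P,Q}$ appearing in Lemma~\ref{lemmaautA2} send $\gamma_1$ to $P\gamma_1+Q\gamma_2$). Equivalently, the identification of $\widehat{\A}_2^{(4)}\left(\BlModT\right)$ with a space spanned by \emph{distributed} diagrams (Proposition~\ref{proppresentation}) is a nontrivial rewriting, under which the non-distributed diagram $\widehat{\iota}_2(K)$, whose four legs all lie in $\Al_1$, becomes half the sum of all diagrams obtained by relabelling two legs into $\Al_1$ and the other two into $\Al_2$ (\cite[Proposition 7.10]{M7}). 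The fact that the quotient admits a basis $\{H_1,H_3\}$ consisting of two-summand diagrams does not make one-summand diagrams vanish; it only means they get re-expressed in that basis.

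Carrying out that re-expression (which is what the paper does, via $\IHX$, $\Aut_t$, Relation~\eqref{eqR6} and Lemma~\ref{lemma4step4legs}) gives, modulo $2_\tinf$--legs diagrams, $\overline{\iota}_2^1(K)=\frac12(1-a)(a+2)^2G_1+\frac12(a+1)(a+2)H_1$, which is never zero since $a\neq-2$ and $1-a$, $a+1$ cannot vanish simultaneously. Hence $K$ is \emph{not} zero in $\fract{\A_2\BlMod}{\A_2^{(2)}\BlMod}$, the image of $\iota_2^1$ has codimension $1$ and not $2$ in $\A_2\left(\BlModT\right)$, and injectivity in the remaining case $\delta=t+1+t^{-1}$ has to be extracted from this explicit nonvanishing computation rather than from a collapse of the source. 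Your first step ($\A_2\BlMod=\A_2^{(4)}\BlMod$, killing the $6$--legs generators by $\AS$) and your final deduction of non-surjectivity from Proposition~\ref{propbasis}~(\ref{item:Prop2}) are sound, but the central step of the proof needs to be replaced by the computation above.
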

\begin{proof}
 Thanks to the first point of Proposition \ref{propbasis} and Corollary \ref{coridealpres}, the map $\iota_2^1$ induces an isomorphism 
 from $\A_2^{(2)}\BlMod$ to $\A_2^{(2)}\left(\BlModT\right)$. Hence we can work with the map $\overline{\iota}_2^1$ 
 induced by $\iota_2^1$ on the quotients $\fract{\A_2}{\A_2^{(2)}}$. 
 
 It is easy to check that $\fract{\A_2\BlMod}{\A_2^{(2)}\BlMod}$ is generated by the following $\H$--diagram: 
 \[
 G=\fourlegsop{\gamma}{t\gamma}{\gamma}{t\gamma}{0.15}.
 \]
 By \cite[Proposition 7.10]{M7}, $\overline{\iota}_2^1(G)$ is half the sum of all diagrams obtained from $G$ by replacing two $\gamma$'s by $\gamma_1$ 
 and the other two by $\gamma_2$. Thanks to $\Aut_\xi$, this gives:
 \[
 \overline{\iota}_2^1(G)=\fourlegsop{\gamma_1}{t\gamma_1}{\gamma_2}{t\gamma_2}{0.12}+\fourlegsop{\gamma_1}{t\gamma_2}{\gamma_1}{t\gamma_2}{0.12}
                   +\fourlegsop{\gamma_1}{t\gamma_2}{\gamma_2}{t\gamma_1}{0.12}.
 \]
 Applying an $\IHX$ relation to the first diagram, $\Aut^2_t$ to the second one and various $\AS$ relations, it can be reformulated into:
 \[
 \overline{\iota}_2^1(G)=\fourlegs{}{}{}{}+\fourlegs{}{}tt-2\fourlegs{}tt{}.
 \]
 Using Relation \eqref{eqR6} and the relations of Lemma \ref{lemma4step4legs}, we finally obtain:
 \[
 \overline{\iota}_2^1(G)=\frac12(1-a)(a+2)^2G_1+\frac12(a+1)(a+2)H_1,
 \]
 up to $2_\tinf$--legs diagrams. It follows by the second point of Proposition \ref{propbasis} that $\overline{\iota}_2^1$ is injective 
 but not surjective.
\end{proof}

We now deal with the map $\iota_2^2$. For that, we have to study the structure of $\A_2\big(\BlModD\big)$. 
The next lemma describes the elements of $\Aut\big(\BlModD\big)$ for a cyclic Blanchfield module $\BlMod$ with irreducible annihilator. 
For $P\in\Qt$, set $\bar{P}(t)=P(t^{-1})$.
\begin{lemma} \label{lemmaautA2}
If $\delta$ is irreducible in $\Qt$, then 
 the group $\Aut\big(\BlModD\big)$ is generated by the automorphisms
 \[
\chi_P:\left\{\begin{array}{ccl}
  \gamma_1&\mapsto& P\gamma_1\\
  \gamma_2&\mapsto& \gamma_2
  \end{array}\right.
\]
 for $P\in\Qt$ such that $P\bar{P}=1\ mod\ \delta$ and 
 \[
  \lambda_{P,Q}:
  \left\{\begin{array}{ccl}
  \gamma_1&\mapsto& P\gamma_1+Q\gamma_2\\
  \gamma_2&\mapsto& \bar{Q}\gamma_1-\bar{P}\gamma_2
  \end{array}\right.
 \]
 for $P,Q\in\Qt$ such that $P\bar{P}+Q\bar{Q}=1\ mod\ \delta$.
\end{lemma}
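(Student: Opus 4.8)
The plan is to recast $\Aut\big(\BlModD\big)$ as a classical unitary group over a field and then run a column–reduction argument. First I would use that $\delta$ is irreducible in the principal ideal domain $\Qt$, so that $\K:=\Qt/(\delta)$ is a field; since $\delta$ is symmetric, the involution $t\mapsto t^{-1}$ descends to a ring involution $P\mapsto\bar P$ of $\K$. As $\delta$ annihilates $\Al$, the module $\Al\oplus\Al$ is free of rank two over $\K$ with basis $\gamma_1,\gamma_2$, and every automorphism of $\BlModD$ is $\Qt$--linear, hence $\K$--linear, so is given by a matrix $M\in GL_2(\K)$. Using the identification $\frac1\delta\Qt/\Qt\cong\K$, $P/\delta\mapsto P$, the pairing $\bl\oplus\bl$ becomes the $\K$--valued hermitian form $h$ with Gram matrix $rI_2$, where $r:=h(\gamma,\gamma)$ is nonzero by non-degeneracy and fixed by the involution by hermitian symmetry (in the two-dimensional case at hand, $r\in\Q^*$). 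Preserving $h$ then amounts to $M$ being unitary; since $r$ is a nonzero conjugation-invariant scalar it cancels, and the condition reads $\bar M^{\top}M=I_2$. Thus $\Aut\big(\BlModD\big)$ is exactly the unitary group $U_2(\K)=\{M\in GL_2(\K):\bar M^{\top}M=I_2\}$.

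Next I would record that the two proposed families lie in $U_2(\K)$ precisely under the stated congruences: writing out $\bar M^{\top}M=I_2$ for $M=\chi_P$ gives the single condition $P\bar P\equiv1$, and for $M=\lambda_{P,Q}$ the three independent entries of $\bar M^{\top}M=I_2$ all reduce to $P\bar P+Q\bar Q\equiv1$ (modulo $\delta$). I would also note the useful special cases $\chi_P=\mathrm{diag}(P,1)$ and $\lambda_{P,0}=\mathrm{diag}(P,-\bar P)$. Combining them shows that every ``opposite diagonal'' map $\gamma_1\mapsto\gamma_1$, $\gamma_2\mapsto e\gamma_2$ with $e\bar e\equiv1$ is a product of generators, namely $\chi_{-\bar e}^{-1}\lambda_{-\bar e,0}=\mathrm{diag}(1,e)$, the relevant congruences for $\chi_{-\bar e}$ and $\lambda_{-\bar e,0}$ both following from $e\bar e\equiv1$.

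The heart of the argument is then a column reduction. Given any $\phi\in U_2(\K)$ with first column $(a,c)^{\top}$, unitarity forces $a\bar a+c\bar c\equiv1$, so $\lambda_{a,c}$ is an admissible generator and, by construction, $\lambda_{a,c}$ and $\phi$ have the same first column; hence $\lambda_{a,c}^{-1}\phi$ fixes $\gamma_1$. A unitary map fixing $\gamma_1$ preserves its orthogonal complement, which is $\gamma_1^{\perp}=\K\gamma_2$ since $r\neq0$, and therefore sends $\gamma_2$ to $e\gamma_2$ with $e\bar e\equiv1$; by the previous paragraph this map is a product of generators. Consequently $\phi=\lambda_{a,c}\cdot\big(\lambda_{a,c}^{-1}\phi\big)$ is a product of $\chi$'s and $\lambda$'s, which proves the claim.

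I expect the main obstacle to be the bookkeeping of the first paragraph rather than the group theory: one must verify carefully that the Blanchfield pairing, transported through $\frac1\delta\Qt/\Qt\cong\K$, really is the standard hermitian form up to the scalar $r$, and that the conjugation on $\K$ matches $t\mapsto t^{-1}$ so that $\bar P$ in the statement is the genuine involution. Once this dictionary is in place, the generation statement is the routine fact that the unitary group of a rank-two hermitian space is generated by the maps realizing a prescribed first unit column together with the stabilizer of that first basis vector, both of which are visibly among the listed generators.
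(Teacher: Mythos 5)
Your proof is correct and follows essentially the same route as the paper's: both identify $\Aut\big(\BlModD\big)$ with the unitary group of the rank-two hermitian space over the field $\Qt/(\delta)$ and factor an arbitrary element as $\lambda_{P,Q}$ (matching the first column, which automatically satisfies $P\bar P+Q\bar Q=1$) composed with a residual map $\gamma_1\mapsto\gamma_1$, $\gamma_2\mapsto e\gamma_2$ with $e\bar e=1$, itself a product of the listed generators. The only differences are cosmetic: you identify the residual map via the orthogonal complement of $\gamma_1$, which lets you avoid the paper's case split $Q=0$ versus $Q\neq 0$, and you realize $\mathrm{diag}(1,e)$ as $\chi_{-\bar e}^{-1}\lambda_{-\bar e,0}$ where the paper conjugates $\chi_e$ by the swap $\lambda_{0,1}$.
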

\begin{proof}
 In the whole proof, polynomials are considered in $\fract{\Qt}{(\delta)}$. For $P\in\Qt$ such that $P\bar{P}=1$, 
 define
 \[
\chi_P':\left\{\begin{array}{ccl}
  \gamma_1&\mapsto& \gamma_1\\
  \gamma_2&\mapsto& P\gamma_2
  \end{array}\right.,
\]
and note that $\chi_P'=\lambda_{0,1}\circ\chi_P\circ\lambda_{0,1}$. 
 Let $\zeta\in\Aut\big(\BlModD\big)$ and write
 \[
\zeta:
  \left\{\begin{array}{ccl}
  \gamma_1&\mapsto& P\gamma_1+Q\gamma_2\\
  \gamma_2&\mapsto& R\gamma_1+S\gamma_2
  \end{array}\right..
\]
Since $\zeta$ must preserve $\bl$, we have $P\bar{P}+Q\bar{Q}=1$, $R\bar{R}+S\bar{S}=1$ and $P\bar{R}+Q\bar{S}=0$. 
 If $Q=0$, then $P\bar{R}=0$, so that $R=0$ and $\zeta=\chi_P\circ\chi_S'$. 
 If $Q\neq0$, then $S=-\bar{Q}^{-1}\bar{P}R$, so that 
\[
1=R\bar{R}+S\bar{S}=R\bar{R}(Q\bar{Q})^{-1}(Q\bar{Q}+P\bar{P})=R\bar{R}(Q\bar{Q})^{-1}.
\]
 Finally $\bar{Q}^{-1}R\overline{\bar{Q}^{-1}R}=1$ and $\zeta=\lambda_{P,Q}\circ\chi_{\bar{Q}^{-1}R}'$.
\end{proof}

We denote by $\Aut_\chi$ and $\Aut_\lambda$ the subfamilies of $\Aut$ relations obtained by the action of the automorphisms $\chi_P$ and $\lambda_{P,Q}$
respectively.

\begin{proposition}\label{prop:PreTwocopies} If $(\Al,\bl)$ is a cyclic Blanchfield module of $\Q$--dimension $2$, then:
  \begin{enumerate}
  \item\label{item:Lem1} $\A_2\big(\BlModD\big)=\A_2^{(4)}\big(\BlModD\big)$,
  \item\label{item:Lem2} $\A_2^{(2)}\big(\BlModD\big)=\widehat{\A}_2^{(2)}\big(\BlModD\big)$,
  \item\label{item:Lem3} $\A_2^{(4)}\big(\BlModD\big)\cong\widehat{\A}_2^{(4)}\big(\BlModD\big)$,
  \item\label{item:Lem4}  $\fract{\A_2^{(4)}\big(\BlModD\big)}{\A_2^{(2)}\big(\BlModD\big)}\cong
     \fract{\widehat{\A}_2^{(4)}\big(\BlModT\big)}{\widehat{\A}_2^{(2)}\big(\BlModT\big)}$; in particular, this quotient has $\Q$--dimension $2$.
  \end{enumerate}
\end{proposition}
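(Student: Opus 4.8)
The four assertions concern the two--copy module $\BlModD$, and the plan is to deduce the two filtration statements (\ref{item:Lem2}) and (\ref{item:Lem4}) from the three--copy results of Proposition \ref{propbasis} via the comparison maps, and to treat (\ref{item:Lem1}) together with the delicate case of (\ref{item:Lem3}) by a direct analysis of six--legs diagrams over two copies. I would dispose of (\ref{item:Lem2}) first. By Corollary \ref{corpresentation} the map $\widehat{\iota}_2\colon\widehat{\A}_2^{(2)}\big(\BlModD\big)\to\widehat{\A}_2^{(2)}\big(\BlModT\big)$ is an isomorphism, and by Proposition \ref{propbasis}(\ref{item:Prop1}) the natural surjection $\widehat{\A}_2^{(2)}\big(\BlModT\big)\twoheadrightarrow\A_2^{(2)}\big(\BlModT\big)$ is an isomorphism; moreover Corollary \ref{coridealpres}, applied with $n=2$, $k=2$, $N=3$, $\ell=2$ and using Proposition \ref{propbasis}(\ref{item:Prop1}), shows that $\iota_2\colon\A_2^{(2)}\big(\BlModD\big)\to\A_2^{(2)}\big(\BlModT\big)$ is an isomorphism. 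These four maps form a commutative square whose remaining edge is the natural surjection $\widehat{\A}_2^{(2)}\big(\BlModD\big)\twoheadrightarrow\A_2^{(2)}\big(\BlModD\big)$; since the composite around the other three edges is injective, this surjection is injective, hence an isomorphism, which is (\ref{item:Lem2}).

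The core of the argument is (\ref{item:Lem1}). Here I would rerun the reduction of Section \ref{secreduction} (Lemmas \ref{lemma3step6legs}--\ref{lemmacheckrelations}) for $\BlModD$ in place of $\BlModT$, fixing the basis $\omega=\{\gamma,t\gamma\}$ so that admissible leg labels lie in $\{\gamma_1,t\gamma_1,\gamma_2,t\gamma_2\}$. The key structural observation is that, by the pigeonhole principle, any tripod of a six--legs $\YY$--diagram has two of its three legs colored in the same copy $\Al_i$; since an $\AS$ relation kills a tripod with two equal labels, these two legs must carry the distinct labels $\gamma_i$ and $t\gamma_i$. Thus every nonzero tripod over two copies has two legs in one copy and its third leg in the other, exactly like the tripods of the generator $D_1$ in Figure \ref{fig6legs1}, and none is fully distributed like the tripods $(\gamma_1,\gamma_2,\gamma_3)$ of $G_1$. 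Applying $\Aut_t$ and $\bHol$ together with Corollary \ref{cor:DV} to the finitely many essential $\YY$--diagrams then expresses each of them, just as the relation $(a+2)D_1=r(H_3-H_4)$ does, as a combination of four--legs diagrams, the coefficient $a+2$ never vanishing since $a\neq-2$. Hence no six--legs generator survives and $\A_2\big(\BlModD\big)=\A_2^{(4)}\big(\BlModD\big)$.

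The same computation yields (\ref{item:Lem3}). When $a\neq1$ one may instead argue abstractly: Proposition \ref{propbasis}(\ref{item:Prop4}) gives $\A_2^{(4)}\big(\BlModT\big)\cong\widehat{\A}_2^{(4)}\big(\BlModT\big)$, so Corollary \ref{coridealpres} with $k=4$ makes $\iota_2\colon\A_2^{(4)}\big(\BlModD\big)\to\A_2^{(4)}\big(\BlModT\big)$ an isomorphism, and the same diagram chase as for (\ref{item:Lem2})---with Corollary \ref{corpresentation} supplying the top edge---shows the surjection $\widehat{\A}_2^{(4)}\big(\BlModD\big)\twoheadrightarrow\A_2^{(4)}\big(\BlModD\big)$ to be an isomorphism. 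The genuinely new point, which I expect to be the main obstacle, is $a=1$: there this route is unavailable because the three--copy spaces differ by Proposition \ref{propbasis}(\ref{item:Prop5}), so one must check that eliminating the six--legs generators of $\BlModD$ leaves \emph{no} residual relation purely among the four--legs diagrams. This is exactly the phenomenon that fails for $\BlModT$, where the relation \eqref{eqR6} degenerates at $a=1$---its coefficient $(1-a)(a+2)^2$ vanishing---into the four--legs relation $4H_3+2H_2-2H_4-4H_1=0$; but that relation is produced by the surviving fully distributed generator $G_1$, which has no analogue over two copies. The plan is thus to verify, from the explicit two--copy relation system assembled above, that no such degeneration occurs for any value of $a$, so that $\widehat{\A}_2^{(4)}\big(\BlModD\big)\twoheadrightarrow\A_2^{(4)}\big(\BlModD\big)$ is an isomorphism.

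Finally, (\ref{item:Lem4}) follows by assembling the pieces. Parts (\ref{item:Lem2}) and (\ref{item:Lem3}) identify $\A_2^{(2)}\big(\BlModD\big)$ and $\A_2^{(4)}\big(\BlModD\big)$ with $\widehat{\A}_2^{(2)}\big(\BlModD\big)$ and $\widehat{\A}_2^{(4)}\big(\BlModD\big)$; the inclusion between the latter two is injective because, by Corollary \ref{corpresentation}, $\widehat{\iota}_2$ identifies it with $\widehat{\A}_2^{(2)}\big(\BlModT\big)\hookrightarrow\widehat{\A}_2^{(4)}\big(\BlModT\big)$, which is injective by Proposition \ref{propbasis}(\ref{item:Prop3}). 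Hence $\fract{\A_2^{(4)}\big(\BlModD\big)}{\A_2^{(2)}\big(\BlModD\big)}\cong\fract{\widehat{\A}_2^{(4)}\big(\BlModD\big)}{\widehat{\A}_2^{(2)}\big(\BlModD\big)}$, and a further application of Corollary \ref{corpresentation}, compatible with the leg filtration, identifies this quotient with $\fract{\widehat{\A}_2^{(4)}\big(\BlModT\big)}{\widehat{\A}_2^{(2)}\big(\BlModT\big)}$, which Proposition \ref{propbasis}(\ref{item:Prop3}) shows to be freely generated by $H_1$ and $H_3$, hence of $\Q$--dimension $2$.
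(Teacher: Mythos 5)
Your handling of points (\ref{item:Lem1}), (\ref{item:Lem2}) and (\ref{item:Lem4}), and of point (\ref{item:Lem3}) when $a\neq1$, matches the paper's argument. In particular, for (\ref{item:Lem1}) the paper makes your pigeonhole observation concrete: the quotient $\fract{\A_2(\BlModD)}{\A_2^{(4)}(\BlModD)}$ is generated by the two diagrams $\Gamma_1,\Gamma_2$ of Figure \ref{figGamma1}, and $\bHol$ and $\Aut^1_t$ give the system $\Gamma_1-\Gamma_2=(\textrm{4--legs terms})$, $a\Gamma_1+2\Gamma_2=(\textrm{4--legs terms})$, of determinant $a+2\neq0$. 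Your diagram chases for (\ref{item:Lem2}) and (\ref{item:Lem4}) are exactly the paper's.

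The genuine gap is in point (\ref{item:Lem3}) for $a=1$. You propose to verify that "the explicit two--copy relation system assembled above" produces no residual four--legs relation, but that system---built only from $\Aut_t$, $\Aut_\xi$, $\Aut_{-1}$ and $\bHol$---is not the full set of defining relations of $\A_2\big(\BlModD\big)$. By definition this space is quotiented by the $\Aut$ relations for the \emph{whole} group $\Aut\big(\BlModD\big)$, and since $N=2<\frac{3n}{2}=3$, Proposition \ref{proppresentation} does not permit replacing $\Aut$ by $\Aut_{res}$: the automorphisms of $\BlModD$ that mix the two copies of $\Al$ contribute relations that are not consequences of the restricted ones. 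The paper must therefore first determine generators of $\Aut\big(\BlModD\big)$ (Lemma \ref{lemmaautA2}: the $\chi_P$ and the mixing automorphisms $\lambda_{P,Q}$), and the entire difficulty of the $a=1$ case is to check that the $\Aut_\lambda$ relations applied to the essential $\YY$--diagrams $\Gamma_1,\dots,\Gamma_6$ of Figures \ref{figGamma1} and \ref{figGamma2} reduce to the two relations already known and do not degenerate into a new pure four--legs relation---exactly the kind of degeneration that \eqref{eqR6} exhibits at $a=1$ over three copies. This verification is nontrivial (it is carried out by the computer programs of Appendix \ref{sec:Program}) and is precisely the step your plan omits; note also that your essential set must include the diagrams $\Gamma_4,\Gamma_5,\Gamma_6$, whose tripods double different copies of $\Al$, since relations applied to them must still be checked to be trivial. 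Without controlling $\Aut_\lambda$, the claim $\A_2^{(4)}\big(\BlModD\big)\cong\widehat{\A}_2^{(4)}\big(\BlModD\big)$ for $a=1$ is unproved.
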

\begin{proof}
 It is easy to see that $\fract{\A_2\big(\BlModD\big)}{\A_2^{(4)}\big(\BlModD\big)}$ is generated by the diagrams $\Gamma_1$ and $\Gamma_2$ of Figure \ref{figGamma1}. 
 \begin{figure} 
  \[\fbox{$
  \Gamma_1=\sixlegsop{\gamma_1}{\gamma_2}{t\gamma_2}{\gamma_1}{\gamma_2}{t\gamma_2} \quad 
  \Gamma_2=\sixlegsop{\gamma_1}{\gamma_2}{t\gamma_2}{t\gamma_1}{\gamma_2}{t\gamma_2} \quad 
  \Gamma_3=\sixlegsop{t\gamma_1}{\gamma_2}{t\gamma_2}{t\gamma_1}{\gamma_2}{t\gamma_2}$}
  \]
  \caption{Some admissible $\YY$--diagrams} \label{figGamma1}
 \end{figure}
 Application of an $\bHol$ relation to $\Gamma_1$ followed by a use of Corollary \ref{cor:DV} gives:
 \[
 \Gamma_1-\Gamma_2=r\fourlegs{}{}tt+r\fourlegsop{t^2\gamma_2}{t\gamma_1}{t\gamma_1}{t\gamma_2}{0.12}=r\fourlegs{}{}tt-r\fourlegs{}t{}{},
 \]
 where the second equality comes from a $\bHol$ and an $\AS$ relations on the second diagram. Application of $\Aut^1_t$ to $\Gamma_2$ followed by a use of Corollary \ref{cor:DV} and again of an $\Aut^1_t$ relation gives:
 \[
 a\Gamma_1+2\Gamma_2=r\fourlegsD{}t{}t=r\fourlegsU{}t{}t,
 \]
 where the second equality comes from an $\Aut_\xi$ relation.
 Since $a\neq-2$, it follows that both $\Gamma_1$ and $\Gamma_2$ can be expressed in term of 4--legs generators. 
 Hence $\A_2\big(\BlModD\big)=\A_2^{(4)}\big(\BlModD\big)$, that is the first point of the proposition. 
 
 The second point follows from Proposition \ref{propbasis} (1) and Corollaries \ref{corpresentation} and \ref{coridealpres}. 
 Note that we have:
 $$\A_2^{(2)}\big(\BlModD\big)\cong\widehat{\A}_2^{(2)}\big(\BlModD\big)\cong\widehat{\A}_2^{(2)}\big(\BlModT\big)\cong\A_2^{(2)}\big(\BlModT\big).$$
 Hence, to prove the third point, we can work on the quotients $\fract{\A_2}{\A_2^{(2)}}$ and $\fract{\widehat{\A}_2}{\widehat{\A}_2^{(2)}}$.

 If $a\neq1$, the third point is given by Corollary \ref{corinjectivity} thanks to the first and fourth points of Proposition \ref{propbasis}. 
 Assume $a=1$. The diagrams $\Gamma_i$ for $i=1,\dots,6$ represented in Figures~\ref{figGamma1} and~\ref{figGamma2} form a minimal
 essential set $\E$ of admissible $\YY$--diagrams. 
 Thanks to Lemmas \ref{lemma3step6legs}, \ref{lemmacheckrelations} and \ref{lemmaautA2},
 we only need to consider $\bHol^\E$, $\Aut_\chi^\E$ and $\Aut_\lambda^\E$.
 \begin{figure}
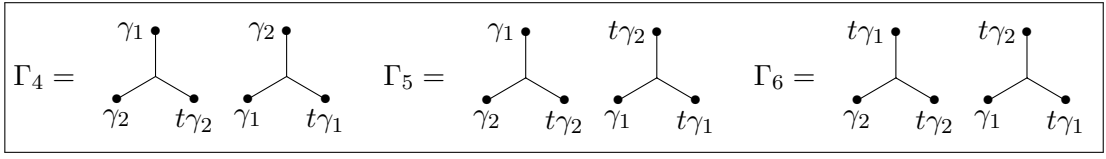
 
  \[\fbox{$
  \Gamma_4=\sixlegsop{\gamma_1}{\gamma_2}{t\gamma_2}{\gamma_2}{\gamma_1}{t\gamma_1} \quad 
  \Gamma_5=\sixlegsop{\gamma_1}{\gamma_2}{t\gamma_2}{t\gamma_2}{\gamma_1}{t\gamma_1} \quad 
  \Gamma_6=\sixlegsop{t\gamma_1}{\gamma_2}{t\gamma_2}{t\gamma_2}{\gamma_1}{t\gamma_1}$}
  \]
  \caption{Some trivial admissible $\YY$--diagrams} \label{figGamma2}
 \end{figure}
 The $\bHol$ and $\Aut_\chi$ relations applied to $\Gamma_i$ with $i>3$ obvioulsy give trivial relations; check that the relations $\Aut_\lambda$ applied 
 to these diagrams also give trivial relations thanks to cancellations in the decomposition.
 
 The $\bHol$ relation applied to $\Gamma_1$ or $\Gamma_2$ recovers the above two relations. Up to these two relations, $\bHol$ applied to $\Gamma_3$ 
 gives a trivial relation up to $2_\tinf$--legs diagrams.
 
 It remains to write the $\Aut^\E$ relations corresponding to the $\Gamma_i$'s with $i\leq3$. A relation $\Aut_\chi$ with an automorphism $\chi_P$ 
 applied to $\Gamma_3$ is recovered from the relation $\Aut_\chi$ with $\chi_{tP}$ applied to $\Gamma_1$. The relations $\Aut_\chi$ applied to $\Gamma_1$ 
 and $\Gamma_2$ can be written by hand. However, the relations $\Aut_\lambda$ imply wild computations which required the help of a computer. 
 The program given in Appendix \ref{sec:Program} checks that a relation $\Aut_\lambda$ applied on $\Gamma_i$ for $i=1,2,3$ can be recovered 
 from the above two relations and usual relations on $4_\tinf$--legs generators. This concludes the third point of the proposition. 

 We have seen that 
 $\fract{\A_2^{(4)}\big(\BlModD\big)}{\A_2^{(2)}\big(\BlModD\big)}\cong\fract{\widehat{\A}_2^{(4)}\big(\BlModD\big)}{\widehat{\A}_2^{(2)}\big(\BlModD\big)}$. 
 By Corollary \ref{corpresentation}, we have $\fract{\widehat{\A}_2^{(4)}\big(\BlModD\big)}{\widehat{\A}_2^{(2)}\big(\BlModD\big)}
 \cong\fract{\widehat{\A}_2^{(4)}\big(\BlModT\big)}{\widehat{\A}_2^{(2)}\big(\BlModT\big)}$. This gives the isomorphism of the fourth point. 
 The dimension of the quotient is given by the third point of Proposition \ref{propbasis}. 
\end{proof}

\begin{proposition} \label{proptwocopies}
 Let $\BlMod$ be a cyclic Blanchfield module of $\Q$--dimension two,
 with annihilator $\delta$. Then the map $\iota_2^2:\A_2\big(\BlModD\big)\to\A_2\left(\BlModT\right)$:
 \begin{itemize}
  \item is an isomorphism if $\delta\neq t+1+t^{-1}$;
  \item has a non trivial kernel generated by the combination of $\H$--diagrams
   \[
   2\fourlegs{}{}{}{}+\fourlegs{}tt{}-2\fourlegs{}{}tt-\fourlegs{}{}{}t
   \]
   if $\delta=t+1+t^{-1}$.
 \end{itemize}
\end{proposition}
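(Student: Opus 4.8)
The plan is to treat the two bullets by a single mechanism, separating on whether $\delta=t+1+t^{-1}$, i.e. (for the annihilator $\delta=t+a+t^{-1}$) on whether $a=1$ or $a\neq1$, and in both cases to exploit the filtration by the number of legs. The common preliminary step is to pin down the bottom of the filtration: since $\A_2^{(2)}\big(\BlModT\big)\cong\widehat{\A}_2^{(2)}\big(\BlModT\big)$ (first point of Proposition~\ref{propbasis}), Corollary~\ref{coridealpres} shows that $\iota_2^2$ restricts to an isomorphism $\A_2^{(2)}\big(\BlModD\big)\xrightarrow{\ \sim\ }\A_2^{(2)}\big(\BlModT\big)$. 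Writing $Q_D=\fract{\A_2(\BlModD)}{\A_2^{(2)}(\BlModD)}$ and $Q_T=\fract{\A_2(\BlModT)}{\A_2^{(2)}(\BlModT)}$, the snake lemma applied to the two leg-filtration short exact sequences then identifies $\ker\iota_2^2$ (and $\mathrm{coker}\,\iota_2^2$) with the kernel (and cokernel) of the induced map $\bar\iota\colon Q_D\to Q_T$. By the first and fourth points of Proposition~\ref{prop:PreTwocopies}, $Q_D$ is free of rank $2$ on the classes of the $\H$--diagrams $H_1$ and $H_3$ of Figure~\ref{figbasis}.

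For $\delta\neq t+1+t^{-1}$, the map $\iota_2^2$ is injective (Corollary~\ref{corinjectivity} applies with $n=2$, $\ell=2$, $N=3$, using the fourth point of Proposition~\ref{propbasis} for $k=2$ and the tautology for $k=3$), so by the snake lemma $\bar\iota$ is injective. The fourth point of Proposition~\ref{propbasis} gives $\A_2\big(\BlModT\big)=\A_2^{(4)}\big(\BlModT\big)\cong\widehat{\A}_2^{(4)}\big(\BlModT\big)$, whence $Q_T$ is \emph{also} free of rank $2$ (third point of Proposition~\ref{propbasis}). An injective linear map between spaces of equal finite dimension is an isomorphism, so $\bar\iota$ is an isomorphism, and the snake lemma upgrades the injectivity of $\iota_2^2$ to an isomorphism. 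This settles the first bullet.

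For $\delta=t+1+t^{-1}$, that is $a=1$, I would first check that the proposed generator $C:=2H_1+H_4-2H_3-H_2$ (with the $H_i$ of Figure~\ref{fig:Haches}, read in the first two copies) lies in $\ker\iota_2^2$: its image $\iota_2^2(C)$ is the same combination read in $\A_2\big(\BlModT\big)$, and specializing relation \eqref{eqR6} at $a=1$, where the left-hand side carried by the three-copy diagram $G_1$ vanishes, gives precisely $2H_1+H_4-2H_3-H_2=0$, so $\iota_2^2(C)=0$. To see $C\neq0$ in $\A_2\big(\BlModD\big)$, I would reduce $C$ modulo $\A_2^{(2)}\big(\BlModD\big)$ using the two relations of Lemma~\ref{lemma4step4legs}, which hold here through the identification $\A_2^{(4)}\big(\BlModD\big)\cong\widehat{\A}_2^{(4)}\big(\BlModT\big)$ of the third point of Proposition~\ref{prop:PreTwocopies} and Corollary~\ref{corpresentation}: at $a=1$ they yield $H_2\equiv-\tfrac12H_1$ and $H_4\equiv\tfrac12H_1-H_3$, so that $C\equiv 3(H_1-H_3)$, a nonzero class in $Q_D$.

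It then remains to compute $\ker\bar\iota$. In the target, the second point of Proposition~\ref{propbasis} presents $Q_T$ as free of rank $2$ on $H_1$ and the six-leg class $G_1$, while Proposition~\ref{propbasis}~(\ref{item:Prop5}.\ref{item:Prop51}) (equivalently the same reduction of \eqref{eqR6} at $a=1$) gives $H_3\equiv H_1$ in $Q_T$. Thus $\bar\iota$ sends both $H_1$ and $H_3$ to $H_1$, so $\ker\bar\iota=\langle H_1-H_3\rangle$ is one-dimensional; by the snake lemma $\ker\iota_2^2\cong\ker\bar\iota$ is one-dimensional, and since $C$ is a nonzero element of it whose class $3(H_1-H_3)$ spans $\ker\bar\iota$, we conclude $\ker\iota_2^2=\langle C\rangle$. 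The conceptual heart, and the only genuinely delicate point, is this $a=1$ case: the subtlety is exactly that the relation \eqref{eqR6} killing $C$ is carried by the \emph{three}-copy diagram $G_1$, hence is available in $\A_2\big(\BlModT\big)$ but not in $\A_2\big(\BlModD\big)$, and this mismatch is what produces the kernel. Everything else is bookkeeping: keeping the two short exact sequences and the snake lemma straight, and carrying out the two modular reductions ($C\equiv 3(H_1-H_3)$ in the source and $H_3\equiv H_1$ in the target) correctly. No step is computationally heavy once the structural results of Propositions~\ref{propbasis} and~\ref{prop:PreTwocopies} are in hand.
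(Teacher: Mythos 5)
Your proof is correct and follows essentially the same route as the paper: reduce modulo the $2_{\leq}$--legs part where $\iota_2^2$ is an isomorphism, use the structural results of Propositions \ref{propbasis} and \ref{prop:PreTwocopies} to compare the two-dimensional quotients, and exhibit the kernel generator via Relation \eqref{eqR6} together with the computation $C\equiv 3(H_1-H_3)$. The only (harmless) divergences are presentational: you package the reduction as a snake-lemma argument, and for the first bullet you combine injectivity from Corollary \ref{corinjectivity} with a dimension count, where the paper instead gets the isomorphism on $\A_2^{(4)}$ directly from Corollary \ref{coridealpres}.
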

\begin{proof}
 First assume $\delta\neq t+1+t^{-1}$. The fourth point of Proposition \ref{propbasis} and Corollary \ref{coridealpres} imply that $\iota_2^2$ induces 
 an isomorphism from $\A_2^{(4)}\big(\BlModD\big)$ to $\A_2^{(4)}\big(\BlModT\big)$. This proves the first point since 
 $\A_2\big(\BlModD\big)=\A_2^{(4)}\big(\BlModD\big)$ by Proposition
 \ref{prop:PreTwocopies} and
 $\A_2\big(\BlModT\big)=\A_2^{(4)}\big(\BlModT\big)$ by the fourth
 point of Proposition \ref{propbasis}. 
 
 Now assume that $\delta=t+1+t^{-1}$. The second point of Proposition \ref{prop:PreTwocopies} asserts that
 $\A_2^{(2)}\big(\BlModD\big)\cong\widehat{\A}_2^{(2)}\big(\BlModD\big)$. Moreover,
 $\A_2^{(2)}\big(\BlModT\big)\cong\widehat{\A}_2^{(2)}\big(\BlModT\big)$
 by the first point of Proposition \ref{propbasis}. Hence it follows
 from Corollary \ref{corpresentation} 
 that $\iota_2^2$ is an isomorphism at the $\A_2^{(2)}$--level. 
 By the first point of Proposition \ref{prop:PreTwocopies}, the quotient 
 $\fract{\A_2\big(\BlModD\big)}{\A_2^{(2)}\big(\BlModD\big)}$ is equal to
 $\fract{\A^{(4)}_2\big(\BlModD\big)}{\A_2^{(2)}\big(\BlModD\big)}$,
 so its image by $\iota_2^2$ is included in
 $\fract{\A_2^{(4)}\left(\BlModT\right)}{\A_2^{(2)}\left(\BlModT\right)}$. Now,
 the $\H$--diagram $H_1$ of Figure \ref{fig:Haches} is clearly in the image of $\iota_2^2$. 
 Finally, by Proposition \ref{propbasis} (\ref{item:Prop5}.\ref{item:Prop51}) and
 Proposition \ref{prop:PreTwocopies} (\ref{item:Lem4}), the kernel of $\iota_2^2$ has dimension~1.
 
 More precisely, thanks to Relation \eqref{eqR6}, the image through $\iota_2^2$ of
 \[
 D=2\fourlegs{}{}{}{}+\fourlegs{}tt{}-2\fourlegs{}{}tt-\fourlegs{}{}{}t
 \]
 is zero. In the quotient $\fract{\widehat{\A}_2^{(4)}\big(\BlModT\big)}{\widehat{\A}_2^{(2)}\big(\BlModT\big)}$, $D$ is equal to $3(H_1-H_3)$, which is non zero 
 by Proposition \ref{propbasis} (\ref{item:Prop3}). Moreover, 
 $\fract{\widehat{\A}_2^{(4)}\big(\BlModT\big)}{\widehat{\A}_2^{(2)}\big(\BlModT\big)}\cong\fract{\A_2\big(\BlModD\big)}{\A^{(2)}_2\big(\BlModD\big)}$ 
 by Proposition \ref{prop:PreTwocopies} (\ref{item:Lem1},\ref{item:Lem4}).
 It follows that $D$ is non trivial in $\A_2\big(\BlModD\big)$. 
\end{proof}

\section{Case when $\mathbf{\Al}$ is of $\Q$--dimension two and non cyclic} \label{secnoncyclic}

In this section, we assume that $\BlMod$ is a non cyclic Blanchfield
module of $\Q$--dimension two. As mentioned at the beginning of Section \ref{secpreliminaries}, it implies that $\Al$ is the direct sum 
of two $\Qt$-modules of order $t+1$. Hence we can write:
\[
\Al=\frac{\Qt}{(t+1)}\gamma\oplus\frac{\Qt}{(t+1)}\eta.
\]
Moreover, it follows from $\bl$ being hermitian and non-degenerate
that, up to rescaling $\eta$, $\bl(\gamma,\gamma)=\bl(\eta,\eta)=0$ and $\bl(\gamma,\eta)=\frac1{t+1}$. Throughout the
section, we consider $\{\gamma,\eta\}$ as the basis $\omega$
for $\Al$ and we set $f(\gamma,\gamma)=f(\eta,\eta)=0$, $f(\gamma,\eta)=\frac1{t+1}$ and $f(\eta,\gamma)=\frac t{t+1}$. Accordingly,
set $\gamma_i=\xi_i(\gamma)$ and $\eta_i=\xi_i(\eta)$, for $i=1,2,3$.

\begin{lemma}\label{lem:AutSum}
 The automorphism group $Aut\BlMod$ is generated by the following automorphisms:
 \[\mu_x:\left\lbrace\begin{array}{lll}
                      \gamma & \mapsto & x\gamma \\
                      \eta & \mapsto & x^{-1}\eta
                     \end{array}\right.
   \qquad \nu:\left\lbrace\begin{array}{lll}
                            \gamma & \mapsto & \eta \\
                            \eta & \mapsto & -\gamma
                           \end{array}\right.
   \qquad \rho_y:\left\lbrace\begin{array}{lll}
                                  \gamma & \mapsto & \gamma+y\eta \\
                                  \eta & \mapsto & \eta 
                                 \end{array}\right.
 \]
where $x$ runs over $\Q\setminus\{0,\pm1\}$ and $y$ over $\Q\setminus\{0\}$.
\end{lemma}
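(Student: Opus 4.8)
The plan is to first pin down the group $\Aut\BlMod$ completely, and then recognise the three listed families as a generating set of the resulting classical matrix group.

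\emph{Identifying $\Aut\BlMod$.} Since the annihilator of $\Al$ is $\delta=t+1$, multiplication by $t$ acts on $\Al$ as $-\mathrm{id}$, and the coefficient ring $\Qt/(t+1)$ is canonically $\Q$ (via $t\mapsto-1$). Consequently a $\Qt$-linear endomorphism of $\Al$ is the same thing as a $\Q$-linear endomorphism of the two-dimensional $\Q$-vector space $\Q\gamma\oplus\Q\eta$, so the module automorphisms of $\Al$ form exactly $\mathrm{GL}_2(\Q)$. I would then single out those preserving $\bl$. Writing $\zeta(\gamma)=a\gamma+b\eta$ and $\zeta(\eta)=c\gamma+d\eta$ with $a,b,c,d\in\Q$, and using $f(\gamma,\eta)=\tfrac1{t+1}$, $f(\eta,\gamma)=\tfrac t{t+1}$, $f(\gamma,\gamma)=f(\eta,\eta)=0$, I compute the four values $\bl(\zeta u,\zeta v)$ on the basis. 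The two diagonal values vanish automatically, because $\tfrac1{t+1}+\tfrac t{t+1}=1\in\Qt$; and $\bl(\zeta\gamma,\zeta\eta)=\tfrac{ad+bc\,t}{t+1}$, which equals $\tfrac1{t+1}$ in $\Q(t)/\Qt$ if and only if the numerator $ad-1+bc\,t$ is divisible by $t+1$, i.e. vanishes at $t=-1$, i.e. $ad-bc=1$. The remaining condition $\bl(\zeta\eta,\zeta\gamma)=\tfrac t{t+1}$ follows from the previous one by the hermitian symmetry of $\bl$. Hence $\Aut\BlMod=\mathrm{SL}_2(\Q)$ acting on $\Q\gamma\oplus\Q\eta$, with no orientation-reversing (determinant $-1$) automorphisms.

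\emph{Generating $\mathrm{SL}_2(\Q)$.} In the ordered basis $(\gamma,\eta)$ the automorphism $\rho_y$ is the lower transvection $\left(\begin{smallmatrix}1&0\\ y&1\end{smallmatrix}\right)$, while $\nu=\left(\begin{smallmatrix}0&-1\\1&0\end{smallmatrix}\right)$, and a direct computation gives $\nu\circ\rho_y\circ\nu^{-1}=\left(\begin{smallmatrix}1&-y\\0&1\end{smallmatrix}\right)$, the upper transvection. As $y$ ranges over $\Q\setminus\{0\}$ (with the identity covering the missing parameter $0$), these exhaust all elementary transvections, and it is classical that the transvections generate $\mathrm{SL}_2$ of any field. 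Therefore $\nu$ together with the $\rho_y$ already generate $\Aut\BlMod$, and \emph{a fortiori} so do $\mu_x,\nu,\rho_y$; the diagonal matrices $\mu_x=\mathrm{diag}(x,x^{-1})$ are in fact redundant, and the exclusion of $x=\pm1$ is harmless since $\mu_1=\mathrm{id}$ and $\mu_{-1}=\nu^2$.

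The only genuinely delicate point is the first step: carrying out the pairing computation correctly in $\Q(t)/\Qt$ and checking that it forces $\det=1$ \emph{exactly}, thereby ruling out the determinant $-1$ elements of $\mathrm{GL}_2(\Q)$. Once $\Aut\BlMod$ has been identified with $\mathrm{SL}_2(\Q)$, the generation statement is a standard fact about elementary matrices over a field.
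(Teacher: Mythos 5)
Your proof is correct, and it reaches the same intermediate waypoint as the paper --- the identification of $\Aut\BlMod$ with $\mathrm{SL}_2(\Q)$ acting on $\Q\gamma\oplus\Q\eta$ --- but finishes differently. The paper simply asserts that preservation of $\bl$ forces $xw-yz=1$ and then proves generation by an explicit three-case decomposition: $\zeta=\rho_{yx^{-1}}\circ\mu_x$ when $z=0$, $\zeta=\nu\circ\rho_{-xy^{-1}}\circ\mu_y$ when $w=0$, and $\zeta=\mu_{w^{-1}}\circ\nu\circ\rho_{-zw}\circ\nu^{-1}\circ\rho_{yw^{-1}}$ otherwise, so all three families are used. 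You instead spell out the pairing computation in $\fract{\Q(t)}{\Qt}$ (checking that the diagonal conditions are automatic because $\frac1{t+1}+\frac t{t+1}=1$, and that the off-diagonal condition is exactly $ad-bc=1$), and then conjugate $\rho_y$ by $\nu$ to obtain the upper transvections and invoke the classical fact that transvections generate $\mathrm{SL}_2$ of a field. Your route buys a more carefully justified determinant condition and the additional observation that the $\mu_x$ are redundant ($\mu_{-1}=\nu^2$); the paper's route buys explicit product formulas for an arbitrary automorphism in terms of all three families, which is the form in which the generators are actually used downstream (the relations $\Aut_\mu$, $\Aut_\nu$, $\Aut_\rho$ are all exploited separately in Proposition \ref{propprest+1}). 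Both arguments are complete and correct.
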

\begin{proof}
 Any automorphism $\zeta$ of $\BlMod$ is given by
 \[
 \zeta:\left\lbrace\begin{array}{lll}
                    \gamma & \mapsto & x\gamma+y\eta \\
                    \eta & \mapsto & z\gamma+w\eta
                   \end{array}\right.
 \]
 with $x,y,z,w$ in $\Q$. Since $\zeta$ preserves the Blanchfield pairing $\bl$, we have $xw-yz=1$. 
 If $z=0$, then $xw=1$ and $\zeta=\rho_{yx^{-1}}\circ\mu_x$. If $w=0$, then $yz=-1$ and $\zeta=\nu\circ\rho_{-xy^{-1}}\circ\mu_y$. 
 Finally, if $zw\neq0$, then $\zeta=\mu_{w^{-1}}\circ\nu\circ\rho_{-zw}\circ\nu^{-1}\circ\rho_{yw^{-1}}$. 
\end{proof}

We denote by $\Aut_\mu$, $\Aut_\nu$ and $\Aut_\rho$ the subfamilies of $\Aut$ relations obtained by the
action of the automorphisms given by $\mu_x$, $\nu$ and $\rho_y$ respectively on one copy of $\Al$ and identity on the others.

\begin{proposition} \label{propprest+1}
  If $(\Al,\bl)$ is a non cyclic Blanchfield module of $\Q$--dimension two, then:
  \begin{enumerate}
  \item\label{item:Propo1}
    $\A_2^{(2)}\left(\BlModT\right)\cong\widehat{\A}_2^{(2)}\left(\BlModT\right)$;
  \item\label{item:Propo2} $\A_2\left(\BlModT\right)=\A_2^{(4)}\left(\BlModT\right)\cong\widehat{\A}_2^{(4)}\left(\BlModT\right)$;
  \item\label{item:Propo3}  $\fract{\A_2\left(\BlModT\right)}{\A_2^{(2)}\left(\BlModT\right)}$ is
    freely generated by the admissible $\H$--diagram
    \[
    \fourlegsop{\gamma_1}{\gamma_2}{\eta_1}{\eta_2}{0.12}.
    \]
  \end{enumerate}
\end{proposition}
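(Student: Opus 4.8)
The plan is to follow the template of the cyclic case (Proposition~\ref{propbasis} and its auxiliary lemmas), exploiting the fact that here $\delta=t+1$, so that multiplication by $t$ acts as $-1$ on $\Al$: every basis leg satisfies $t\gamma=-\gamma$, $t\eta=-\eta$ (and $t^{-1}$ also acts as $-1$). The first step is to record the analogue of Corollary~\ref{cor:DV} in this setting. For a leg $v$ with an order-$(t+1)$ label $\gamma$, the relation $\LV$ combines $D$ with the diagram $D_+$ obtained by multiplying the label of $v$ and the linkings $f_{vv'}$ by $t$ into a single diagram whose label at $v$ is $(1+t)\gamma=0$ and whose linkings $(1+t)f_{vv'}$ are polynomial (since $(1+t)\bl(\gamma,\cdot)=0$); Lemma~\ref{lem:0LabelledVertex} then gives
\[
D+D_+=\sum_{\substack{v'\textnormal{ vertex of }D\\v'\neq v}}D_{vv'},
\]
a combination of diagrams with strictly fewer legs. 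This identity replaces Corollary~\ref{cor:DV} as the leg-reduction engine.

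Next I would invoke the reduced presentations of Section~\ref{secreduction}: by Lemmas~\ref{lemma3step6legs}, \ref{lemma3step4legs} and~\ref{lemmacheckrelations} it suffices to analyse the $\omega$--admissible $\YY$--diagrams and the strongly $\omega$--admissible $\H$--diagrams, subject to $\AS$, $\bHol$ and the relations $\Aut_\mu,\Aut_\nu,\Aut_\rho$ furnished by the generators of $\Aut\BlMod$ of Lemma~\ref{lem:AutSum}. The decisive point is that $\Aut_\mu$ is extremely restrictive: $\mu_x$ acting on $\Al_i$ multiplies an admissible generator by $x^{p_i-q_i}$, where $p_i,q_i$ count its legs labelled $\gamma_i,\eta_i$, so that, as $x$ ranges over $\Q\setminus\{0,\pm1\}$, every generator with $p_i\neq q_i$ for some $i$ must vanish. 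Since a distributed diagram uses each copy for at most one pair of legs, the surviving $\YY$--diagrams are those whose three pairs are all of the form $\{\gamma_i,\eta_i\}$, and the surviving strongly admissible $\H$--diagrams carry exactly one $\gamma_i$ and one $\eta_i$ in each of $\Al_1,\Al_2$.

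For point~(\ref{item:Propo2}) I would reduce these $\YY$--diagrams. Applying $\bHol$ to one tripod multiplies its three labels by $t$; since the tripod has an \emph{odd} number of legs, pulling the three signs out by $\LV$ yields $D=-D''$, where $D''$ differs from $D$ only in that certain linkings are multiplied by $-t$. As $-tf_{vv'}\equiv f_{vv'}\ (\mathrm{mod}\ \Qt)$, the relation $\LD$ identifies $D''$ with $D$ up to $4_\tinf$--legs diagrams, so $2D$ is a combination of diagrams with fewer legs and all $6$--legs generators are eliminated; this gives $\A_2(\BlModT)=\A_2^{(4)}(\BlModT)$. The isomorphism $\A_2^{(4)}(\BlModT)\cong\widehat{\A}_2^{(4)}(\BlModT)$ is the delicate half: applying $\bHol$ to the two tripods of a surviving, non tripod-symmetric $\YY$--diagram gives two expressions $2D=-R_A$ and $2D=-R_B$, hence a relation $R_A=R_B$ among $\H$--diagrams, and one must verify that every such consistency relation is already a consequence of the $4_\tinf$--legs relations. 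This is precisely where the cyclic case degenerated at $a=1$ (Proposition~\ref{propbasis}~(\ref{item:Prop5})); checking that no analogous degeneracy occurs for $\delta=t+1$—a finite but genuine bookkeeping over the two combinatorial types of surviving $\YY$--diagrams (all three pairs split across the two tripods, or one pair on each tripod with one pair split)—is the step I expect to be the main obstacle.

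Granting point~(\ref{item:Propo2}), the remaining points follow by reading off the reduced $4$--legs presentation. Point~(\ref{item:Propo1}) is immediate, since in that presentation the only relations involving solely $2_\tinf$--legs diagrams are the usual ones, so the surjection $\widehat{\A}_2^{(2)}(\BlModT)\twoheadrightarrow\A_2^{(2)}(\BlModT)$ is injective, hence bijective. For point~(\ref{item:Propo3}), the surviving strongly admissible $\H$--diagrams are the four placements of $\{\gamma_1,\eta_1,\gamma_2,\eta_2\}$ on the two tripods; an $\AS$, the interchange of the two tripods, and $\Aut_\nu$ (via $\nu:\gamma\mapsto\eta,\ \eta\mapsto-\gamma$) identify all four, up to sign, with $\fourlegsop{\gamma_1}{\gamma_2}{\eta_1}{\eta_2}{0.12}$, whereas $\Aut_\mu$ returns this diagram unchanged and the only extra term produced by $\Aut_\rho$ carries two $\eta_1$--legs and is therefore killed by $\Aut_\mu$. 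Thus no relation reduces this generator modulo $2_\tinf$--legs diagrams, so $\widehat{\A}_2^{(4)}(\BlModT)/\widehat{\A}_2^{(2)}(\BlModT)$ is freely generated by it; transporting through the isomorphisms of points~(\ref{item:Propo1}) and~(\ref{item:Propo2}) yields point~(\ref{item:Propo3}).
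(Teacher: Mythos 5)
Your plan follows the paper's proof essentially step for step: the same reduced presentations from Section \ref{secreduction}, the same use of $\Aut_\mu$ to kill every admissible generator carrying two $\gamma_i$'s or two $\eta_i$'s in a single copy, the same mechanism $tx=-x$ turning $\bHol$ and $\Aut_\nu$ into identities of the form $2D=(\textrm{fewer legs})$, and the same two surviving combinatorial types of $\YY$--diagrams (the paper's $Y_1$ and $Y_2$) and of strongly admissible $\H$--diagrams (the paper's $X_1$ and $X_2$, which $\Aut_\nu$ relates by $X_1+X_2=-(\textrm{2--legs term})$, whence the free generator of point~(\ref{item:Propo3})). Your substitute for Corollary~\ref{cor:DV}, namely $D+D_+=\sum_{v'\neq v}D_{vv'}$, is correct and is exactly what the paper uses implicitly when it pulls the signs out by $\LV$ and corrects the linkings by $\LD$.

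There are, however, two places where you stop short of a proof. The decisive one is the consistency check for point~(\ref{item:Propo2}), which you explicitly defer as ``the main obstacle'': to get $\A_2^{(4)}\left(\BlModT\right)\cong\widehat{\A}_2^{(4)}\left(\BlModT\right)$ you must verify that the several identities $2Y_i=(\textrm{4--legs combination})$ coming from $\bHol$ and from $\Aut_\nu$ on the various copies are mutually compatible, so that eliminating $Y_1$ and $Y_2$ imposes no relation on the $\H$--diagrams. The paper carries this out explicitly: for $Y_1$, $\bHol$ and $\Aut_\nu$ on $\Al_1$ yield the \emph{same} $4$--legs right-hand side while $\Aut_\nu$ on $\Al_2$ or $\Al_3$ acts trivially, and for $Y_2$ only $\bHol$ and the simultaneous $\Aut_\nu$ act non-trivially and again agree. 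This is precisely where the cyclic case degenerates at $a=1$, so it is the substance of the proposition rather than a formality; without it your point~(\ref{item:Propo2}), and hence points~(\ref{item:Propo1}) and~(\ref{item:Propo3}) which you derive from it, are not established. The second, smaller gap: discarding the $\Aut_\mu$--trivial generators is not covered by Lemma~\ref{lemmacheckrelations}, which only handles diagrams trivial by $\AS$ or identified by $\Aut_\xi$; one must check by hand that the relations $\bHol$, $\Aut_\nu$, $\Aut_\xi$ and especially $\Aut_\rho$ applied to such a trivial diagram become consequences of the retained relations (the paper shows, for instance, that $\Aut_\rho$ applied to a diagram with a repeated label reproduces an $\Aut_\nu$ relation on another diagram). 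Your proposal skips this verification entirely.
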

\begin{proof}
 We start with the presentation given by Lemma \ref{lemma3step6legs} to deal with 6--legs generators. 
 Let $D$ be an admissible $\YY$--diagram with two legs $v$ and $w$ labelled by the same $\gamma_i$ or the same $\eta_i$. 
 Application of any $\Aut_\mu$ relation shows that the diagram $D$ is trivial. Application of an $\Aut_\nu$, $\Aut_\xi$ or
 $\bHol$ relation to $D$ gives a trivial relation in $\Aut_\nu^\omega$, $\Aut_\xi^\omega$ or $\bHol^\omega$. Application 
 of an $\Aut_\rho$ relation to $D$ gives in $\Aut_\rho^\omega$ the relation of $\Aut_\nu^\omega$ obtained by applying 
 $\Aut_\nu$ to the diagram $D'$ obtained from $D$ by changing the labels of $v$ and $w$ to $\gamma_i$ and $\eta_i$ respectively 
 and the linking $f_{vw}$ to $\frac{1}{t+1}$. Hence we can remove from the generators the admissible $\YY$--diagrams with a common label 
 on two distinct legs without adding any relation. Then, using Lemma \ref{lemmacheckrelations}, it is easily seen that
 one can restrict the $6$--legs generators to the admissible $\YY$--diagrams:
 \[
 Y_1=\sixlegsop{\gamma_1}{\gamma_2}{\eta_2}{\eta_1}{\gamma_3}{\eta_3}
 \hspace{1cm}\textrm{and}\hspace{1cm}
 Y_2=\sixlegsop{\gamma_1}{\gamma_2}{\gamma_3}{\eta_1}{\eta_2}{\eta_3}.
 \]
 On these generators, $\Aut_\mu$ and $\Aut_\xi$ act trivially, so we are left
 with checking the relations coming from $\bHol$ and $\Aut_\nu$ relations. Note however that applying these relations may change the prescribed the rational fractions on pairs of vertices, so that use of an $\LD$ relation may be needed to correct them. For instance, application of $\Aut_\nu$, regarding $\Al_1$, on $Y_1$ gives
\[
Y_1=\sixlegsop{-\eta_1}{\gamma_2}{\eta_2}{\gamma_1}{\gamma_3}{\eta_3}+\fourlegsop{\gamma_2}{\eta_2}{\gamma_3}{\eta_3}{0.12}.
\]
The prescribed rational fraction between the top vertices of $Y_1$ is indeed $\frac{1}{1+t}$, whereas the one of the $6$--legs term on the right is $f(-\eta,\gamma)=\frac{-t}{1+t}=\frac{1}{1+t}-1$; use of an $\LD$ relation is hence needed and produces the $4$--legs term. Then applications of $\LV$ and $\Aut_\xi$ relations lead to
 \[
 2Y_1=\fourlegsop{\gamma_2}{\eta_2}{\gamma_3}{\eta_3}{0.12}.
 \]
 Similarly, application of $\bHol$ to $Y_1$ gives
\[
Y_1=\sixlegsop{t\gamma_1}{t\gamma_2}{t\eta_2}{\eta_1}{\gamma_3}{\eta_3}=-\sixlegsop{\gamma_1}{\gamma_2}{\eta_2}{\eta_1}{\gamma_3}{\eta_3}+\fourlegsop{\gamma_2}{\eta_2}{\gamma_3}{\eta_3}{0.12}.
\]
Here, the second equality is due to the fact that $tx=-x$ for any $x\in\Al$. The rational fraction on the top pair of vertices has to be corrected so that it corresponds to the prescribed one; this produces the $4$--legs term. Once again, we get
 \[
 2Y_1=\fourlegsop{\gamma_2}{\eta_2}{\gamma_3}{\eta_3}{0.12}.
 \]
Applications of $\Aut_\nu$ on $\Al_2$ and $\Al_3$ give trivial relations. 
 On $Y_2$, the only relations that do act non trivially are $\bHol$ and
 $\Aut_\nu$ applied simultaneously on the three $\Al_i$; both give:
 \[
 2Y_2=3\fourlegsop{\gamma_1}{\gamma_2}{\eta_1}{\eta_2}{0.12}
      +\twolegsop{\gamma_1}{\eta_1}+\zerolegs.
 \]
 Finally, we can remove all 6--legs generators without adding any
 relation. This proves the second assertion. 
 
 We turn to the study of the 4--legs generators. Thanks to Lemmas \ref{lemma3step4legs} and \ref{lemmacheckrelations} and removing as previously 
 generators with a common label on two distinct legs, we are led to the diagrams:
 \[
 X_1=\fourlegsop{\gamma_1}{\gamma_2}{\eta_1}{\eta_2}{0.12}\quad\textrm{and}\quad X_2=\fourlegsop{\gamma_1}{\eta_2}{\eta_1}{\gamma_2}{0.12}
 \]
 on which we have to check the effect of the $\Aut_\nu$ relations. Applying $\Aut_\nu$ on $\Al_1$ or $\Al_2$ to $X_1$ or $X_2$ always gives:
 \[
 X_1+X_2=-\twolegsop{\gamma_1}{\eta_1}.
 \]
 Since no more relation arises from the 4--legs generators, this proves
 the first and third assertions.
\end{proof}

\begin{proposition} \label{propt+1}
 Let $\BlMod$ be a non cyclic Blanchfield module of $\Q$--dimension
 two. Then the maps $\iota_2^1:\A_2\BlMod\to\A_2\left(\BlModT\right)$ and $\iota_2^2:\A_2\big(\BlModD\big)\to\A_2\left(\BlModT\right)$ are injective. 
 Moreover, $\iota_2^2$ is surjective, while $\iota_2^1$ is not. 
\end{proposition}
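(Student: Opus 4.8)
The plan is to obtain both injectivity statements at one stroke from Corollary~\ref{corinjectivity}, applied with $n=2$ and $N=3$. For $\iota_2^2$ (the case $\ell=2$) its hypotheses reduce to the isomorphisms $\widehat{\A}_2^{(2k)}\left(\BlModT\right)\cong\A_2^{(2k)}\left(\BlModT\right)$ for $k=2,3$: the case $k=2$ is Proposition~\ref{propprest+1}(\ref{item:Propo2}), and $k=3$ is automatic since $\widehat{\A}_2^{(6)}=\A_2=\A_2^{(6)}$. For $\iota_2^1$ (the case $\ell=1$) I additionally need $k=1$, which is Proposition~\ref{propprest+1}(\ref{item:Propo1}). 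As a byproduct, the final assertion of Corollary~\ref{corinjectivity} yields $\widehat{\A}_2^{(2k)}\big(\BlMod^{\oplus\ell}\big)\cong\A_2^{(2k)}\big(\BlMod^{\oplus\ell}\big)$ for $\ell=1,2$ and all $k$, which I will use below.

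For the surjectivity of $\iota_2^2$ I would argue along the leg filtration, which $\iota_2^2$ respects. On the bottom piece, Proposition~\ref{propprest+1}(\ref{item:Propo1}) together with Corollary~\ref{coridealpres} shows that $\iota_2^2$ restricts to an isomorphism $\A_2^{(2)}\big(\BlModD\big)\to\A_2^{(2)}\left(\BlModT\right)$. On the top piece, Proposition~\ref{propprest+1}(\ref{item:Propo2},\ref{item:Propo3}) gives $\A_2\left(\BlModT\right)=\A_2^{(4)}\left(\BlModT\right)$ and presents the quotient $\A_2\left(\BlModT\right)/\A_2^{(2)}\left(\BlModT\right)$ as the free $\Q$--line on the admissible $\H$--diagram $X_1$ of Proposition~\ref{propprest+1}(\ref{item:Propo3}); since $X_1$ only carries labels from the first two copies, it is the $\iota_2^2$--image of the identical diagram in $\A_2\big(\BlModD\big)$. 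Being surjective on both the subspace $\A_2^{(2)}\left(\BlModT\right)$ and the quotient line, $\iota_2^2$ is surjective, hence an isomorphism.

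The non-surjectivity of $\iota_2^1$ is the delicate point, and my approach mirrors that of Proposition~\ref{proponecopy}. As above, $\iota_2^1$ is already an isomorphism onto $\A_2^{(2)}\left(\BlModT\right)$ at the $\A_2^{(2)}$--level, so it suffices to show that the induced map $\overline{\iota}_2^1$ on the one-dimensional quotient $\A_2\left(\BlModT\right)/\A_2^{(2)}\left(\BlModT\right)=\Q X_1$ is zero; equivalently, that every one-copy four-legs diagram lands in $\A_2^{(2)}\left(\BlModT\right)$. Specializing the reduction of Proposition~\ref{propprest+1} to a single copy, the four-legs part of $\A_2\BlMod$ is generated, modulo diagrams with fewer legs, by the $\H$--diagram $G$ each of whose trivalent vertices carries one $\gamma_1$-- and one $\eta_1$--coloured leg (every configuration with a repeated colour at a vertex is killed by $\AS$, and $\IHX$ leaves only this one). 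I would then compute $\overline{\iota}_2^1(G)$ through the distribution formula of \cite[Proposition 7.10]{M7}, as in Proposition~\ref{proponecopy}: the cross-pairings of the four legs contribute, after $\OR$ and the relation $X_1+X_2\equiv-\twolegsop{\gamma_1}{\eta_1}$ from the proof of Proposition~\ref{propprest+1}, a multiple of $X_1$, which is exactly offset by the self-pairing (loop) contribution modulo $\A_2^{(2)}$. The net class being zero, the image of $\iota_2^1$ is the proper subspace $\A_2^{(2)}\left(\BlModT\right)\subsetneq\A_2\left(\BlModT\right)$, so $\iota_2^1$ is not surjective.

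I expect this last cancellation to be the only genuine obstacle. The self-paired (tadpole/dumbbell) terms are easy to overlook, yet omitting them would wrongly produce a nonzero multiple of $X_1$ and hence suggest, incorrectly, that $\iota_2^1$ is onto; the whole asymmetry with $\iota_2^2$ lives in the fact that the quotient generator $X_1$ genuinely requires two distinct copies, so that the symmetrisation of a one-copy diagram must collapse. Pinning down the loop contribution precisely requires tracking edge orientations through $\OR$ and the $\LD$--corrections forced when the recoloured legs no longer carry the prescribed rational fractions, in the same spirit as---though lighter than---the computer-assisted verification of Proposition~\ref{prop:PreTwocopies}. Everything else is a formal consequence of the leg filtration, Corollaries~\ref{corinjectivity} and~\ref{coridealpres}, and Proposition~\ref{propprest+1}.
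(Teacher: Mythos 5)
Your treatment of injectivity (for both maps) and of the surjectivity of $\iota_2^2$ is correct and essentially the paper's: injectivity comes from Corollary~\ref{corinjectivity} fed with Proposition~\ref{propprest+1}~(\ref{item:Propo1}) and (\ref{item:Propo2}), and surjectivity of $\iota_2^2$ from $\A_2\left(\BlModT\right)=\A_2^{(4)}\left(\BlModT\right)$ together with Corollary~\ref{coridealpres} (the paper phrases this through $\A_2\big(\BlModD\big)=\A_2^{(4)}\big(\BlModD\big)$ rather than through the quotient generator, but the two are interchangeable).

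The non-surjectivity of $\iota_2^1$ is where your argument has a genuine gap, and it is also where you diverge from the paper. The paper computes nothing here: it asserts that $\A_2\BlMod=\A_2^{(2)}\BlMod$ (every admissible diagram with at least four legs is claimed to die by $\Aut_\mu$), so that the image of $\iota_2^1$ lands in the proper subspace $\A_2^{(2)}\left(\BlModT\right)$. You instead retain the $4$--legs class $G=\fourlegsop{\gamma}{\eta}{\gamma}{\eta}{0.12}$ and claim that $\overline{\iota}_2^1(G)$ vanishes because a ``self-pairing (loop) contribution'' offsets the cross terms. That mechanism does not exist: you are importing it from the pairing map $\psi_n$ of Figure~\ref{figgroup}, whereas $\iota_2^1$ and the distribution formula of \cite[Proposition 7.10]{M7} involve no pairing of legs and produce no loop or tadpole terms --- see the paper's own use of that formula in Proposition~\ref{proponecopy}, where $\overline{\iota}_2^1(G)$ is just half the sum of the six relabellings of $G$ sending two legs into the first copy and two into the second. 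Carrying that out here, the two relabellings placing both $\gamma$'s (equivalently both $\eta$'s) in one copy die by $\Aut_\mu$, the remaining four reduce by $\IHX$, $\AS$ and $\Aut_\xi$ to $X_1-2X_2$ in the notation of the proof of Proposition~\ref{propprest+1}, and the relation $X_1+X_2\equiv 0$ turns this into $3X_1$ modulo $\A_2^{(2)}$ --- a \emph{nonzero} multiple of the quotient generator if Proposition~\ref{propprest+1}~(\ref{item:Propo3}) is taken at face value. (One can bypass \cite[Proposition 7.10]{M7} entirely: applying the $\Aut$ relation for the automorphism $\gamma_1\mapsto\frac35\gamma_1+\frac45\gamma_2$, $\eta_1\mapsto\frac35\eta_1+\frac45\eta_2$, $\gamma_2\mapsto-\frac45\gamma_1+\frac35\gamma_2$, $\eta_2\mapsto-\frac45\eta_1+\frac35\eta_2$ to $\iota_2^1(G)$ and expanding by $\LV$ gives $\iota_2^1(G)=X_1-2X_2$ exactly, with no linking corrections.) So the cancellation on which your whole argument rests does not occur, and as written you have not proved non-surjectivity.

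Be aware that you cannot simply patch this by quoting the paper's step instead: the paper disposes of $G$ by applying $\Aut_\mu$ to a diagram with two equally labelled legs, but $G$ carries exactly two $\gamma$'s and two $\eta$'s, so $\mu_x$ multiplies it by $x^{2-2}=1$ and the relation is vacuous; $\Aut_\nu$, $\Aut_\rho$, $\IHX$ and $\bHol$ are likewise vacuous on $G$ modulo $2_\tinf$--legs diagrams. Either some further relation forces $G\in\A_2^{(2)}\BlMod$ --- in which case the computation above would force $3X_1\equiv0$, against the freeness asserted in Proposition~\ref{propprest+1}~(\ref{item:Propo3}) --- or $\overline{\iota}_2^1(G)\neq0$ and $\iota_2^1$ is actually surjective. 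Settling which of these holds requires an argument that neither your proposal nor the one-line justification in the paper supplies; the step you yourself flagged as ``the only genuine obstacle'' is precisely the one that is missing.
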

\begin{proof}
 It is easily seen that $\A_2\BlMod$ is generated by admissible diagrams. 
 Such a diagram with at least four legs has necessarily two legs
 labelled by $\gamma$ or two legs labelled by $\eta$; the relation
 $\Aut_\mu$ implies that it is trivial. 
 It follows that $\A_2\BlMod=\A_2^{(2)}\BlMod$. 
 Hence, by Proposition \ref{propprest+1} and Corollary~\ref{corinjectivity}, $\iota_2^1$ is injective but not surjective. 

 Similarly, we have
 $\A_2\big(\BlModD\big)=\A_2^{(4)}\big(\BlModD\big)$ and it follows
 from the second point of Proposition \ref{propprest+1}
 and Corollary \ref{coridealpres} that $\iota_2^2$ is an isomorphism. 
\end{proof}

\begin{appendix}
  \section{Programs} \label{sec:Program}

Let $\BlMod$ be a cyclic Blanchfield module with annihilator $\delta=t+1+t^{-1}$. 
Let $\gamma$ be a generator of $\Al$. As recalled at the beginning of
Section \ref{secbasis}, $\bl(\gamma,\gamma)=\frac{r}{\delta}\ mod\
\Qt$ with $r\in\Q^*$. We set $\gamma_i=\xi_i(\gamma)$ for
$i=1,2$. A $\Q$--basis of $\Al^{\oplus 2}$ is given by the $t^\e\gamma_i$ with $\e=0,1$ and $i=1,2$.

This appendix aims at determining the relations induced on 
$\fract{\A_2\big(\BlModD\big)}{\A^{(2)}_2\big(\BlModD\big)}$ by applying the $\Aut_\lambda$ relations to the diagrams $\Gamma_i$ of Figure \ref{figGamma1}.
Set
 \[
  \lambda_{a,b,c,d}:
  \left\{\begin{array}{ccl}
  \gamma_1&\mapsto& (at+b)\gamma_1+(ct+d)\gamma_2\\
  \gamma_2&\mapsto& (ct^{-1}+d)\gamma_1-(at^{-1}+b)\gamma_2
  \end{array}\right.
 \]
for $a,b,c,d\in\Q$ such that $a^2+b^2+c^2+d^2=1+ab+cd$.  
We wrote three programs in
OCaml\footnote{avalaible at
  \url{http://www.i2m.univ-amu.fr/~audoux/Reduc_Gamma#.ml} with \url{#}\,$=1,2,3$.} which
compute the reductions of $\lambda_{a,b,c,d}.\Gamma_1$, 
$\lambda_{a,b,c,d}.\Gamma_2$ and $\lambda_{a,b,c,d}.\Gamma_3$. Here, $a$, $b$, $c$ and $d$
are considered as parameters and all the computations are made in
\[
\Q_{a,b,c,d}:=\fract{\Q[a,b,c,d]}{a^2+b^2+c^2+d^2-ab-cd-1}.
\]
Note that every element in $\Q_{a,b,c,d}$ has a unique representative in $\Q[a,b,c,d]$ that involves no $a^k$ with $k\geq 2$.

\subsection{Implementation of the variables}
\label{sec:ImplementationVariables}

Elements of $\Q_{a,b,c,d}$ are implemented as lists of vectors
$(\alpha,k_a,k_b,k_c,k_d)\in\Q\times \{0,1\}\times\N^3\subset \Q\times\N^4$, corresponding to the sum
of the $\alpha a^{k_a}b^{k_b}c^{k_c}d^{k_d}$. Addition and
multiplication in $\Q_{a,b,c,d}$ are implemented accordingly, using
the relation $a^2=1+ab+cd-b^2-c^2-d^2$ to remove terms with powers of
$a$ higher than 2.

Generators of
$\fract{\A_2\big(\BlModD\big)}{\A^{(2)}_2\big(\BlModD\big)}$ are
separated between 6--legs and 4--legs ones. The former are implemented
as
$\big((k_1,i_1),\ldots,(k_6,i_6)\big)\in\big(\Z\times\{1,2\}\big)^6$
corresponding to
\[
\sixlegsGR{t^{k_1}\gamma_{i_1}}{t^{k_2}\gamma_{i_2}}{t^{k_3}\gamma_{i_3}}{t^{k_4}\gamma_{i_4}}{t^{k_5}\gamma_{i_5}}{t^{k_6}\gamma_{i_6}}
\]
and the latter as $\big((k_1,i_1),\ldots,(k_4,i_4)\big)\in\big(\Z\times\{1,2\}\big)^4$
corresponding to
\[
\fourlegsGR{t^{k_1}\gamma_{i_1}}{t^{k_2}\gamma_{i_2}}{t^{k_3}\gamma_{i_3}}{t^{k_4}\gamma_{i_4}}.
\]
In both cases, the linking between legs $v$ and $w$ labelled by $t^{k_j}\gamma_{i_j}$ and $t^{k_\ell}\gamma_{i_\ell}$ is $f_{vw}=t^{k_j-k_\ell}\frac r\delta$. 
General elements of $\fract{\A_2\big(\BlModD\big)}{\A^{(2)}_2\big(\BlModD\big)}$ are implemented in two ways:
\begin{itemize}
\item for inputs: as linear combinations of the above generators;
\item for outputs: as vectors $(\alpha_1,\alpha_2,\alpha_3,\alpha_4,\alpha_5,\alpha_6)\in\Q_{a,b,c,d}^6$ corresponding
to the linear combination $\alpha_1\Gamma_1+\alpha_2\Gamma_2+\alpha_3H_1+\alpha_4H_2+\alpha_5H_3+\alpha_6H_4$, where the $H_i$ and the $\Gamma_i$ are given in Figures \ref{fig:Haches} and \ref{figGamma1}.
\end{itemize}

\subsection{Reduction algorithms}

The programs are based on two reduction algorithms \texttt{reduc4} and \texttt{reduc6}, one for 4--legs
generators and one for 6--legs generators. Both algorithms take, as input, a diagram $\Gamma$ implemented as
an element of $\big(\Z\times\{1,2\}\big)^{4\textrm{ or }6}$ representing one of the above generators and send, as output, a vector $(\alpha_1,\ldots,\alpha_6)\in\Q_{a,b,c,d}^6$
which expresses $\Gamma$ as $\Gamma=\alpha_1\Gamma_1+\alpha_2\Gamma_2+\alpha_3H_1+\alpha_4H_2+\alpha_5H_3+\alpha_6H_4$.

\vspace{2ex}

The \texttt{reduc4} algorithm goes as follows.
\begin{description}[topsep=.5ex,itemsep=-.5ex]
\item[{\tt Take}] $\big((k_1,e_1),(k_2,e_2),(k_3,e_3),(k_4,e_4)\big)$.
  (Call it $\Gamma$.)
\item[{\tt Check if}]
  \begin{description}[topsep=.5ex,itemsep=-.5ex]
  \item[$e_1+e_2+e_3+e_4$ {\tt is odd}] (that is if one of the $\Al_i$
    appears an odd number of times),
    \item[{\tt or if}] $(k_1,e_1)=(k_2,e_2)$ {\tt or} $(k_3,e_3)=(k_4,e_4)$
    (that is if two legs adjacent to a same
    trivalent vertex share the same label);
    \item[{\tt if so then send}] $(0,0,0,0,0,0)$.
  \end{description}
\item[$\longrightarrow$] At this point, legs
  sharing an adjacent trivalent vertex have distinct labels, and each $\Al_i$ appears 0, 2 or 4 times in leg labels.
\item[{\tt Check if}]
  \begin{description}[topsep=.5ex,itemsep=-.5ex]
  \item[{\tt some $k_i$ is $<0$ or $>1$};]
    \item[{\tt if so then}] {\tt send the sum of the results of reduc4 applied to
      the elements\\given by Corollary \ref{cor:DV} to increase or
      decrease $k_i$.}
  \end{description}
\item[$\longrightarrow$] At this point, each leg label is either some $\gamma_i$ or some $t\gamma_i$.
\item[{\tt Check if}]
  \begin{description}[topsep=.5ex,itemsep=-.5ex]
  \item[$e_1=e_2=e_3=e_4$] (that is if all legs are labelled in the same
    $\Al_i$; if so then $\Gamma$ is either
    $\fourlegsop{\gamma_i}{t\gamma_i}{\gamma_i}{t\gamma_i}{.1}$,
    $\fourlegsop{t\gamma_i}{\gamma_i}{t\gamma_i}{\gamma_i}{.1}$,
    $\fourlegsop{\gamma_i}{t\gamma_i}{t\gamma_i}{\gamma_i}{.1}$
    or $\fourlegsop{t\gamma_i}{\gamma_i}{\gamma_i}{t\gamma_i}{.1}$);
    \item[{\tt if so then send}]\ \\$(-1)^{k_1+k_3}\Big(${\tt
      reduc4}$\big((0,1),(1,1),(0,2),(1,2)\big)${\tt
      +reduc4}$\big((0,1),(1,2),(0,1),(1,2)\big)$\\{\tt
      +reduc4}$\big((0,1),(1,2),(0,2),(1,1)\big)\Big)$ (see \cite[Proposition 7.10]{M7}).
  \end{description}
\item[$\longrightarrow$] At this point, each $\Al_i$ appears exactly twice in leg labels.
\item[{\tt Check if}]
  \begin{description}[topsep=.5ex,itemsep=-.5ex]
  \item[$e_1=e_2$] (that is if the two $\Al_1$--labelled legs are both on the
    left or both on the right), 
  \item[{\tt if so then send}]\ \\{\tt
      reduc4}$\big((k_1,e_1),(k_3,e_3),(k_2,e_2),(k_4,e_4)\big)${\tt
      -reduc4}$\big((k_1,e_1),(k_4,e_4),(k_2,e_2),(k_3,e_3)\big)$ \\(using an $\IHX$
    move).
  \end{description}
\item[{\tt Check if}]
  \begin{description}[topsep=.5ex,itemsep=-.5ex]
  \item[$e_1=e_4$] (that is if the two $\Al_1$--labelled legs are both at the
    top or both at the bottom), 
  \item[{\tt if so then send}]{\tt
      -reduc4}$\big((k_1,e_1),(k_2,e_2),(k_4,e_4),(k_3,e_3)\big)$ (using an $\AS$
    move).
  \end{description}
\item[$\longrightarrow$] At this point, each $\Al_i$ appears simultaneously in labels of
  opposite legs only.
\item[{\tt Use}] $S:=k_1+k_2+k_3+k_4$ {\tt and, if $S=2$, the parity
    of $k_1+k_2$ and $k_1+k_2$ to determine to which element, among $H_1$,
    $H_2$, $H_3$ or $H_4$, $\Gamma$ is equal to, and send the\\corresponding output.}
 \end{description}

\vspace{2ex}

The \texttt{reduc6} algorithm goes as follows.
\begin{description}[topsep=.5ex,itemsep=-.5ex]
\item[{\tt Take}] $\big((k_1,e_1),(k_2,e_2),(k_3,e_3),(k_4,e_4),(k_5,e_5),(k_6,e_6)\big)$.
  (Call it $\Gamma$.)
\item[{\tt Check if}]
  \begin{description}[topsep=.5ex,itemsep=-.5ex]
  \item[$e_1+e_2+e_3+e_4+e_5+e_6$ {\tt is odd}] (that is if one of the $\Al_i$
    appears an odd number of times),
    \item[{\tt or if}] $(k_1,e_1)=(k_2,e_2)$ {\tt or}
      $(k_2,e_2)=(k_3,e_3)$ {\tt or} $(k_3,e_3)=(k_1,e_1)$ {\tt or}
      $(k_4,e_4)=(k_5,e_5)$ {\tt or} $(k_5,e_5)=(k_6,e_6)$ {\tt or} $(k_6,e_6)=(k_4,e_4)$
    (that is if two legs adjacent to a same
    trivalent vertex share the same label);
    \item[{\tt if so then send}] $(0,0,0,0,0,0)$.
  \end{description}
\item[$\longrightarrow$] At this point, legs
  sharing an adjacent trivalent vertex have distinct labels, and each
  $\Al_i$ appears an even number of times in leg labels.
\item[{\tt Check if}]
  \begin{description}[topsep=.5ex,itemsep=-.5ex]
  \item[{\tt some $k_i$ is $<0$ or $>1$};]
    \item[{\tt if so then}] {\tt send the sum of the results of reduc6
        and reduc4 applied to
      the\\elements given by Corollary \ref{cor:DV} to increase or
      decrease $k_i$.}
  \end{description}
\item[$\longrightarrow$] At this point, each leg label is either some $\gamma_i$ or some
  $t\gamma_i$, and each $\Al_i$ appears 2 or 4 times in leg
  labels---if all legs were $\Al_i$--labelled, then two legs sharing a
  same adjacent trivalent vertex would have a same label.
\item[{\tt Check if}]
  \begin{description}[topsep=.5ex,itemsep=-.5ex]
  \item[$e_1+e_2+e_3+e_4+e_5+e_6=8$] (that is if $\Al_1$ appears 4
    times and $\Al_2$ twice in leg labels),
    \item[{\tt if so then}]{\tt send}\\
        {\tt reduc6}\hbox{$\big((k_1,3-e_1),(k_2,3-e_2),(k_3,3-e_3),(k_4,3-e_4),(k_5,3-e_5),(k_6,3-e_6)\big)$}\\
      (using a $\Aut_\xi$ move).
  \end{description}
\item[$\longrightarrow$] At this point, $\Al_1$ appears twice and
  $\Al_2$ four times in leg labels, and the two $\Al_1$--labelled legs
  are on distinct connected components of $\gamma$, otherwise two
  $\Al_2$--labelled legs sharing a
  same adjacent trivalent vertex would have a same label.
\item[{\tt Check if}]
  \begin{description}[topsep=.5ex,itemsep=-.5ex]
  \item[$e_i=1$] for $i\in\{2,3,5,6\}$ (that is if the two
    $\Al_1$--labelled legs are not both at the top), 
  \item[{\tt if so then send}] {\tt
      reduc6}$\big((k'_1,e'_1),(k'_2,e'_2),(k'_3,e'_3),(k'_4,e'_4),(k'_5,e'_5),(k'_6,e'_6)\big)$
    {\tt where\\
      $\big((k'_1,e'_1),(k'_2,e'_2),(k'_3,e'_3)\big)$
    and $\big((k'_4,e'_4),(k'_5,e'_5),(k'_6,e'_6)\big)$ are
    respectively the\\cyclic permutations of $\big((k_1,e_1),(k_2,e_2),(k_3,e_3)\big)$
    and $\big((k_4,e_4),(k_5,e_5),(k_6,e_6)\big)$\\such that $e'_1=e'_4=1$.}
  \end{description}
\item[$\longrightarrow$] At this point, the two legs at the top are
  $\Al_1$--labelled and the four other are $\Al_2$--labelled, with, on each connected component of $\Gamma$, one occurence of $\gamma_2$ and one occurence 
  of $t\gamma_2$.
\item[{\tt Use}] $k_3+k_5-k_2-k_6$ {\tt and the parity
    of $k_1+k_4$ to determine to which element, among $\pm\Gamma_1$ or
    $\pm\Gamma_2$, $\Gamma$ is equal to, and send the corresponding output.}
 \end{description}

\subsection{Computations and results}

\begin{figure}[t]
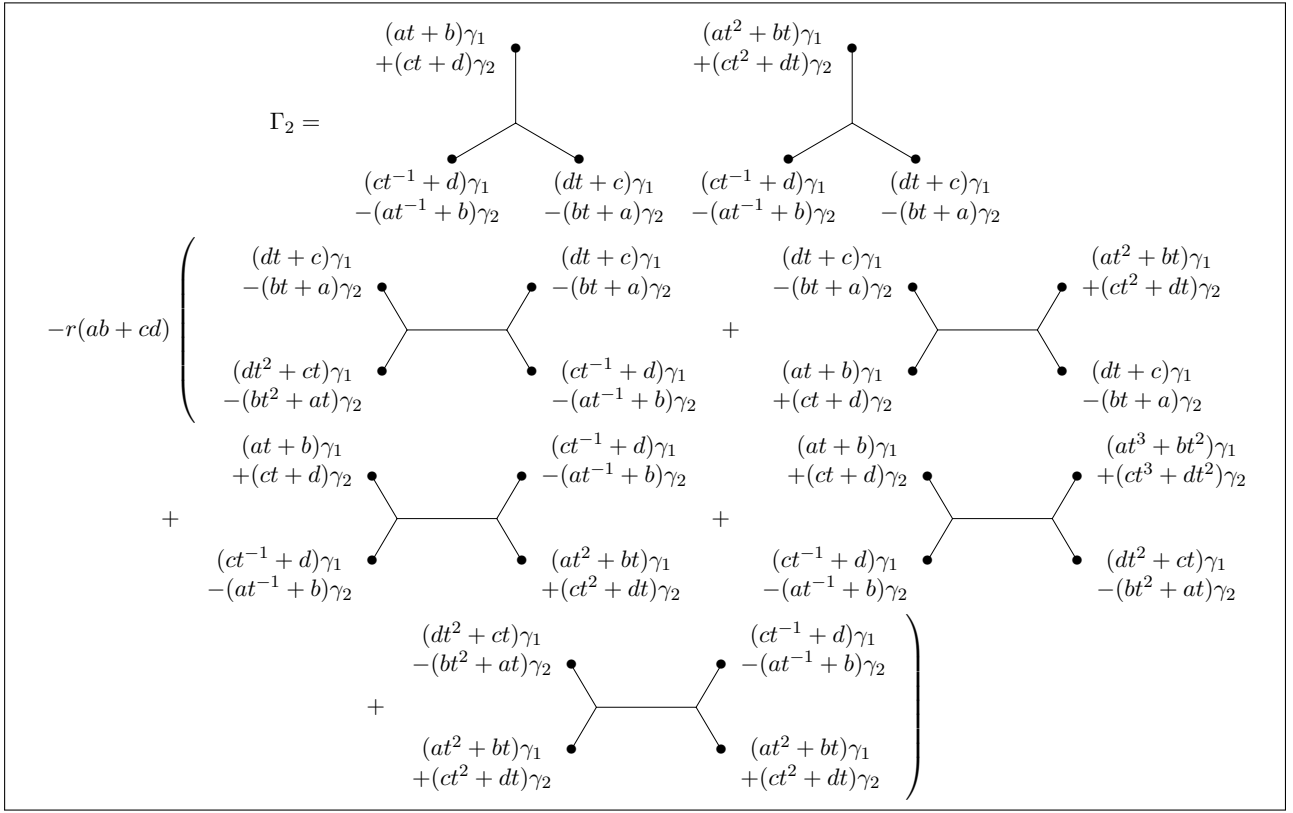

\CenterFormula{\fbox{\scalebox{0.82}{$
\begin{array}{c}
  \Gamma_2=\sixlegsGRR
  {\begin{array}{c}(at+b)\gamma_1\\+(ct+d)\gamma_2\end{array}}
  {\begin{array}{c}(ct^{-1}+d)\gamma_1\\-(at^{-1}+b)\gamma_2\end{array}}
  {\begin{array}{c}(dt+c)\gamma_1\\-(bt+a)\gamma_2\end{array}}
  {\begin{array}{c}(at^2+bt)\gamma_1\\+(ct^2+dt)\gamma_2\end{array}}
  {\begin{array}{c}(ct^{-1}+d)\gamma_1\\-(at^{-1}+b)\gamma_2\end{array}}
  {\begin{array}{c}(dt+c)\gamma_1\\-(bt+a)\gamma_2\end{array}}\\
-r(ab+cd)\left(\fourlegsGR
{\begin{array}{c}(dt+c)\gamma_1\\-(bt+a)\gamma_2\\\strut\end{array}}
{\begin{array}{c}\strut\\(dt^2+ct)\gamma_1\\-(bt^2+at)\gamma_2\end{array}}
{\begin{array}{c}\strut\\
    (ct^{-1}+d)\gamma_1\\-(at^{-1}+b)\gamma_2\end{array}}
{\begin{array}{c}(dt+c)\gamma_1\\-(bt+a)\gamma_2\\\strut\end{array}}
+\fourlegsGR
{\begin{array}{c}(dt+c)\gamma_1\\-(bt+a)\gamma_2\\\strut\end{array}}
{\begin{array}{c}\strut\\(at+b)\gamma_1\\+(ct+d)\gamma_2\end{array}}
{\begin{array}{c}\strut\\ (dt+c)\gamma_1\\-(bt+a)\gamma_2\end{array}}
{\begin{array}{c}(at^2+bt)\gamma_1\\+(ct^2+dt)\gamma_2\\\strut\end{array}}\right.\\
\hspace{2.2cm}+\fourlegsGR
{\begin{array}{c}(at+b)\gamma_1\\+(ct+d)\gamma_2\\\strut\end{array}}
{\begin{array}{c}\strut\\(ct^{-1}+d)\gamma_1\\-(at^{-1}+b)\gamma_2\end{array}}
{\begin{array}{c}\strut\\
    (at^2+bt)\gamma_1\\+(ct^2+dt)\gamma_2\end{array}}
{\begin{array}{c}(ct^{-1}+d)\gamma_1\\-(at^{-1}+b)\gamma_2\\\strut\end{array}}
+\fourlegsGR
{\begin{array}{c}(at+b)\gamma_1\\+(ct+d)\gamma_2\\\strut\end{array}}
{\begin{array}{c}\strut\\(ct^{-1}+d)\gamma_1\\-(at^{-1}+b)\gamma_2\end{array}}
{\begin{array}{c}\strut\\
    (dt^2+ct)\gamma_1\\-(bt^2+at)\gamma_2\end{array}}
{\begin{array}{c}(at^3+bt^2)\gamma_1\\+(ct^3+dt^2)\gamma_2\\\strut\end{array}}\\
\left.+\fourlegsGR
{\begin{array}{c}(dt^2+ct)\gamma_1\\-(bt^2+at)\gamma_2\\\strut\end{array}}
{\begin{array}{c}\strut\\(at^2+bt)\gamma_1\\+(ct^2+dt)\gamma_2\end{array}}
{\begin{array}{c}\strut\\
    (at^2+bt)\gamma_1\\+(ct^2+dt)\gamma_2\end{array}}
{\begin{array}{c}(ct^{-1}+d)\gamma_1\\-(at^{-1}+b)\gamma_2\\\strut\end{array}}\right)
\end{array}$}}}
\caption{Input for $\lambda_{a,b,c,d}.\Gamma_2$} \label{figGamma2Input}
\end{figure}

\begin{figure}[t]
\CenterFormula{\fbox{\scalebox{0.82}{$
\begin{array}{c}
  \Gamma_1=\sixlegsGRR
  {\begin{array}{c}(at+b)\gamma_1\\+(ct+d)\gamma_2\end{array}}
  {\begin{array}{c}(ct^{-1}+d)\gamma_1\\-(at^{-1}+b)\gamma_2\end{array}}
  {\begin{array}{c}(dt+c)\gamma_1\\-(bt+a)\gamma_2\end{array}}
  {\begin{array}{c}(at+b)\gamma_1\\+(ct+d)\gamma_2\end{array}}
  {\begin{array}{c}(ct^{-1}+d)\gamma_1\\-(at^{-1}+b)\gamma_2\end{array}}
  {\begin{array}{c}(dt+c)\gamma_1\\-(bt+a)\gamma_2\end{array}}\\
-r(ab+cd)\left(\fourlegsGR
{\begin{array}{c}(ct^{-1}+d)\gamma_1\\-(at^{-1}+b)\gamma_2\\\strut\end{array}}
{\begin{array}{c}\strut\\(dt+c)\gamma_1\\-(bt+a)\gamma_2\end{array}}
{\begin{array}{c}\strut\\
    (ct^{-1}+d)\gamma_1\\-(at^{-1}+b)\gamma_2\end{array}}
{\begin{array}{c}(dt+c)\gamma_1\\-(bt+a)\gamma_2\\\strut\end{array}}
+\fourlegsGR
{\begin{array}{c}(dt+c)\gamma_1\\-(bt+a)\gamma_2\\\strut\end{array}}
{\begin{array}{c}\strut\\(at+b)\gamma_1\\+(ct+d)\gamma_2\end{array}}
{\begin{array}{c}\strut\\ (dt+c)\gamma_1\\-(bt+a)\gamma_2\end{array}}
{\begin{array}{c}(at+b)\gamma_1\\+(ct+d)\gamma_2\\\strut\end{array}}\right.\\
\hspace{2.2cm}+\fourlegsGR
{\begin{array}{c}(at+b)\gamma_1\\+(ct+d)\gamma_2\\\strut\end{array}}
{\begin{array}{c}\strut\\(ct^{-1}+d)\gamma_1\\-(at^{-1}+b)\gamma_2\end{array}}
{\begin{array}{c}\strut\\
    (at+b)\gamma_1\\+(ct+d)\gamma_2\end{array}}
{\begin{array}{c}(ct^{-1}+d)\gamma_1\\-(at^{-1}+b)\gamma_2\\\strut\end{array}}
+\fourlegsGR
{\begin{array}{c}(at+b)\gamma_1\\+(ct+d)\gamma_2\\\strut\end{array}}
{\begin{array}{c}\strut\\(ct^{-1}+d)\gamma_1\\-(at^{-1}+b)\gamma_2\end{array}}
{\begin{array}{c}\strut\\
    (dt^2+ct)\gamma_1\\-(bt^2+at)\gamma_2\end{array}}
{\begin{array}{c}(at^2+bt)\gamma_1\\+(ct^2+dt)\gamma_2\\\strut\end{array}}\\
\left.+\fourlegsGR
{\begin{array}{c}(dt^2+ct)\gamma_1\\-(bt^2+at)\gamma_2\\\strut\end{array}}
{\begin{array}{c}\strut\\(at^2+bt)\gamma_1\\+(ct^2+dt)\gamma_2\end{array}}
{\begin{array}{c}\strut\\
    (at+b)\gamma_1\\+(ct+d)\gamma_2\end{array}}
{\begin{array}{c}(ct^{-1}+d)\gamma_1\\-(at^{-1}+b)\gamma_2\\\strut\end{array}}\right)
\end{array}$}}}
\caption{Input for $\lambda_{a,b,c,d}.\Gamma_1$} \label{figGamma1Input}
\end{figure}

\begin{figure}[t]
\CenterFormula{\fbox{\scalebox{0.82}{$
\begin{array}{c}
  \Gamma_1=\sixlegsGRR
  {\begin{array}{c}(at^2+bt)\gamma_1\\+(ct^2+dt)\gamma_2\end{array}}
  {\begin{array}{c}(ct^{-1}+d)\gamma_1\\-(at^{-1}+b)\gamma_2\end{array}}
  {\begin{array}{c}(dt+c)\gamma_1\\-(bt+a)\gamma_2\end{array}}
  {\begin{array}{c}(at^2+bt)\gamma_1\\+(ct^2+dt)\gamma_2\end{array}}
  {\begin{array}{c}(ct^{-1}+d)\gamma_1\\-(at^{-1}+b)\gamma_2\end{array}}
  {\begin{array}{c}(dt+c)\gamma_1\\-(bt+a)\gamma_2\end{array}}\\
-r(ab+cd)\left(\fourlegsGR
{\begin{array}{c}(ct^{-1}+d)\gamma_1\\-(at^{-1}+b)\gamma_2\\\strut\end{array}}
{\begin{array}{c}\strut\\(dt+c)\gamma_1\\-(bt+a)\gamma_2\end{array}}
{\begin{array}{c}\strut\\
    (ct^{-1}+d)\gamma_1\\-(at^{-1}+b)\gamma_2\end{array}}
{\begin{array}{c}(dt+c)\gamma_1\\-(bt+a)\gamma_2\\\strut\end{array}}
+\fourlegsGR
{\begin{array}{c}(dt+c)\gamma_1\\-(bt+a)\gamma_2\\\strut\end{array}}
{\begin{array}{c}\strut\\(at^2+bt)\gamma_1\\+(ct^2+dt)\gamma_2\end{array}}
{\begin{array}{c}\strut\\ (dt+c)\gamma_1\\-(bt+a)\gamma_2\end{array}}
{\begin{array}{c}(at^2+bt)\gamma_1\\+(ct^2+dt)\gamma_2\\\strut\end{array}}\right.\\
\hspace{2.2cm}+\fourlegsGR
{\begin{array}{c}(at^2+bt)\gamma_1\\+(ct^2+dt)\gamma_2\\\strut\end{array}}
{\begin{array}{c}\strut\\(ct^{-1}+d)\gamma_1\\-(at^{-1}+b)\gamma_2\end{array}}
{\begin{array}{c}\strut\\
    (at^2+bt)\gamma_1\\+(ct^2+dt)\gamma_2\end{array}}
{\begin{array}{c}(ct^{-1}+d)\gamma_1\\-(at^{-1}+b)\gamma_2\\\strut\end{array}}
+\fourlegsGR
{\begin{array}{c}(at^2+bt)\gamma_1\\+(ct^2+dt)\gamma_2\\\strut\end{array}}
{\begin{array}{c}\strut\\(ct^{-1}+d)\gamma_1\\-(at^{-1}+b)\gamma_2\end{array}}
{\begin{array}{c}\strut\\
    (dt^2+ct)\gamma_1\\-(bt^2+at)\gamma_2\end{array}}
{\begin{array}{c}(at^3+bt^2)\gamma_1\\+(ct^3+dt^2)\gamma_2\\\strut\end{array}}\\
\left.+\fourlegsGR
{\begin{array}{c}(dt^2+ct)\gamma_1\\-(bt^2+at)\gamma_2\\\strut\end{array}}
{\begin{array}{c}\strut\\(at^3+bt^2)\gamma_1\\+(ct^3+dt^2)\gamma_2\end{array}}
{\begin{array}{c}\strut\\
    (at^2+bt)\gamma_1\\+(ct^2+dt)\gamma_2\end{array}}
{\begin{array}{c}(ct^{-1}+d)\gamma_1\\-(at^{-1}+b)\gamma_2\\\strut\end{array}}\right)
\end{array}$}}}
\caption{Input for $\lambda_{a,b,c,d}.\Gamma_3$} \label{figGamma3Input}
\end{figure}

As the computation for $\Gamma_2$ is slightly more complicated than for $\Gamma_1$ and $\Gamma_3$, we start with $\Gamma_2$. 
The action of $\lambda_{a,b,c,d}$ on $\Gamma_2$ produces:
\[
\sixlegsGRR
  {\begin{array}{c}(at+b)\gamma_1\\+(ct+d)\gamma_2\end{array}}
  {\begin{array}{c}(ct^{-1}+d)\gamma_1\\-(at^{-1}+b)\gamma_2\end{array}}
  {\begin{array}{c}(dt+c)\gamma_1\\-(bt+a)\gamma_2\end{array}}
  {\begin{array}{c}(at^2+bt)\gamma_1\\+(ct^2+dt)\gamma_2\end{array}}
  {\begin{array}{c}(ct^{-1}+d)\gamma_1\\-(at^{-1}+b)\gamma_2\end{array}}
  {\begin{array}{c}(dt+c)\gamma_1\\-(bt+a)\gamma_2\end{array}},
\]
with the same linkings as in $\Gamma_2$. However, in our implementation of the diagrams as linear combinations of the generators described 
in Section \ref{sec:ImplementationVariables}, the convention gives, for two legs $v$ and $w$ 
labelled by $P\gamma_1+Q\gamma_2$ and $R\gamma_1+S\gamma_2$ respectively, a linking equal to $f_{vw}=(P\bar{R}+Q\bar{S})\frac r\delta$. 
For instance, numbering the vertices as $\sixlegsSchema$, we have $f^{\lambda_{a,b,c,d}.\Gamma_2}_{14}=f^{\Gamma_2}_{14}=\frac{rt^{-1}}\delta$ 
whereas the linking in the above diagram is
\[
\begin{array}{rcl}
f_{14}&=&\frac{r\big((at+b)(at^{-2}+bt^{-1})+(ct+d)(ct^{-2}+dt^{-1})\big)}\delta\\
&=&\frac{r\big((ab+cd)+(a^2+b^2+c^2+d^2)t^{-1}+(ab+cd)t^{-2}\big)}\delta\\
&=&\frac{r\big((a^2+b^2+c^2+d^2-ab-cd)t^{-1}+(ab+cd)t^{-1}\delta\big)}\delta\\
& =& \frac{rt^{-1}}\delta+r(ab+cd)t^{-1}.
\end{array}
\]

\noindent This can be fixed, thanks to $\LV$, by adding a term
\[
-r(ab+cd)\left(\fourlegsGR
{\begin{array}{c}(dt+c)\gamma_1\\-(bt+a)\gamma_2\\\strut\end{array}}
{\begin{array}{c}\strut\\(dt^2+ct)\gamma_1\\-(bt^2+at)\gamma_2\end{array}}
{\begin{array}{c}\strut\\
    (ct^{-1}+d)\gamma_1\\-(at^{-1}+b)\gamma_2\end{array}}
{\begin{array}{c}(dt+c)\gamma_1\\-(bt+a)\gamma_2\\\strut\end{array}}\right).
\]
Likewise, the linking $f^{\Gamma_2}_{25}$, $f^{\Gamma_2}_{36}$,
$f^{\Gamma_2}_{35}$ and $f^{\Gamma_2}_{26}$ can be fixed by adding
similar 4--legs terms. All the other linkings vanish already as expected.
Finally, we get the decomposition of $\Gamma_2$ given in Figure \ref{figGamma2Input}.

To compute the corresponding relation, we defined six matrices, one
for each term in the formula of Figure \ref{figGamma2Input}, rows corresponding to legs and columns to each of the four monomials that appear in the leg
labels. The program uses these matrices to develop with $\LV$
the six diagrams in order to get a weighted sum of generators, as they are described in Section~\ref{sec:ImplementationVariables}. 
Then, by applying either {\tt reduc4} or {\tt reduc6} to each term in this weighted sum, it expresses it as a linear combination of $\Gamma_1$, 
$\Gamma_2$ and the $H_i$'s. Finally, the program uses the relations 
$H_1=-2H_2$ and $H_4=-H_2-H_3$ from Lemma \ref{lemma4step4legs}---which hold in $\fract{\A_2\big(\BlModD\big)}{\A^{(2)}_2\big(\BlModD\big)}$ by 
the same computations as in $\fract{\A_2\big(\BlModT\big)}{\A^{(2)}_2\big(\BlModT\big)}$---to reduce this linear combination in terms of
$\Gamma_1$, $\Gamma_2$, $H_2$ and $H_3$ only. We end up with
\[
\Gamma_2=(b^2+d^2-ab-cd-1)\Gamma_1+(2b^2+2d^2-2ab-2cd-1)\Gamma_2+r(3ab+3cd-3b^2-3d^2+3)H_3,
\]
that is
\[
(a^2+c^2)(\Gamma_1+2\Gamma_2-3rH_3)=0.
\]
But it was already known that $\Gamma_1+2\Gamma_2=r\fourlegsU{}t{}t$ and the same computation as in the proof of Proposition \ref{proponecopy} gives
$\fourlegsU{}t{}t=H_1+H_3-2H_4=3H_3$.

Similarly, the action of $\lambda_{a,b,c,d}$ on $\Gamma_1$ leads to the decomposition given in Figure \ref{figGamma1Input}.
The program reduces it to
\[
\Gamma_1=(ab+cd+1)\Gamma_1+2(ab+cd)\Gamma_2-3r(ab+cd)H_3,
\]
that is 
\[
(ab+cd)(\Gamma_1+2\Gamma_2-3rH_3)=0,
\]
which recovers once again a previously known formula.

Finally, the action of $\lambda_{a,b,c,d}$ on $\Gamma_3$ leads to the decomposition given in Figure \ref{figGamma3Input}.
The program reduces it to
\[
\Gamma_1=(2-b^2-d^2)\Gamma_1+2(1-b^2-d^2)\Gamma_2+3r(b^2+d^2-1)H_3,
\]
that is 
\[
(b^2+d^2-1)(\Gamma_1+2\Gamma_2-3rH_3)=0,
\]
which still recovers the same previously known formula.

\end{appendix}

\def\cprime{$'$}
\providecommand{\bysame}{\leavevmode ---\ }
\providecommand{\og}{``}
\providecommand{\fg}{''}
\providecommand{\smfandname}{\&}
\providecommand{\smfedsname}{\'eds.}
\providecommand{\smfedname}{\'ed.}
\providecommand{\smfmastersthesisname}{M\'emoire}
\providecommand{\smfphdthesisname}{Th\`ese}

\end{document}